\documentclass{amsart}

\usepackage{amsmath,amssymb,amsthm,comment,mathrsfs}
\usepackage[usenames]{color}
\usepackage{tikz,tikz-cd}
\usetikzlibrary{matrix,arrows,decorations.pathmorphing}
\usepackage[usenames]{color}
\usepackage[margin=1.0in]{geometry}
\usepackage{hyperref}
\hypersetup{pdfstartview={XYZ null null 1.00}, pdfpagemode=UseNone, colorlinks,breaklinks, linkcolor=blue,urlcolor=blue, anchorcolor=blue,citecolor=blue}

\usepackage[alphabetic,short-journals,short-publishers]{amsrefs}
\usepackage{enumitem}
\usepackage{newclude}

\newcommand{\frakgl}{\mathfrak{gl}}
\newcommand{\Q}{\mathbb{Q}}
\newcommand{\Z}{\mathbb{Z}}
\newcommand{\R}{\mathbb{R}}
\newcommand{\C}{\mathbb{C}}
\newcommand{\F}{\mathbb{F}}
\newcommand{\A}{\mathbb{A}}
\newcommand{\T}{\mathbb{T}}
\newcommand{\Qbar}{\overline{\mathbb{Q}}}

\newcommand{\GL}{\mathrm{GL}}
\newcommand{\SL}{\mathrm{SL}}

\newcommand{\ad}{\mathrm{ad}}
\newcommand{\Hom}{\mathrm{Hom}}
\newcommand{\End}{\mathrm{End}}
\newcommand{\Gal}{\mathrm{Gal}}
\newcommand{\M}{\mathrm{M}}

\newcommand{\univ}{\mathrm{univ}}
\newcommand{\lra}{\longrightarrow}

\newcommand{\Frob}{\mathrm{Frob}}
\newcommand{\Nm}{\mathrm{Nm}}

\newcommand{\calO}{\mathcal{O}}

\newcommand{\calS}{\mathcal{S}}
\newcommand{\calL}{\mathcal{L}}

\newcommand{\calG}{\mathcal{G}}
\newcommand{\rhobar}{\overline{\rho}}

\newcommand{\frakm}{\mathfrak{m}}
\newcommand{\frakp}{\mathfrak{p}}
\DeclareMathOperator{\Spec}{Spec}

\newcommand{\depth}{\mathrm{depth}}
\newcommand{\tr}{\mathrm{tr}}
\newcommand{\loc}{\mathrm{loc}}

\newcommand{\cris}{\mathrm{cr}}

\newcommand{\st}{\mathrm{st}}
\newcommand{\fraka}{\mathfrak{a}}

\newcommand{\Ar}{\mathrm{Ar}}

\newcommand{\Art}{\mathrm{Art}}

\newcommand{\WD}{\mathrm{WD}}
\newcommand{\dR}{\mathrm{dR}}
\newcommand{\gr}{\mathrm{gr}}
\newcommand{\ab}{\mathrm{ab}}

\newcommand{\CNL}{\mathrm{CNL}}

\DeclareMathOperator{\Sym}{Sym}
\newcommand{\Fss}{F\mbox{-}\mathrm{ss}}
\newcommand{\rec}{\mathrm{rec}}
\newcommand{\recT}{\mathrm{rec}^T}
\newcommand{\Sp}{\mathrm{Sp}}

\newcommand{\Aut}{\mathrm{Aut}}
\newcommand{\Fil}{\mathrm{Fil}}
\newcommand{\frakz}{\mathfrak{z}}		
\newcommand{\Ext}{\mathrm{Ext}}
\newcommand{\red}{\mathrm{red}}

\newcommand{\rest}{|}
\newcommand{\rbar}{\overline{r}}

\newcommand{\bbT}{\mathbb{T}}
\DeclareMathOperator{\Res}{Res}
\DeclareMathOperator{\Lie}{Lie}
\DeclareMathOperator{\HT}{HT}

\newcommand{\tildeS}{\widetilde{S}}
\newcommand{\GO}{\mathrm{GO}}

\newcommand{\GSp}{\mathrm{GSp}}
\newcommand{\Liegl}{\mathfrak{gl}}
\newcommand{\transp}{\,{}^t\!}

\providecommand{\abs}[1]{\lvert #1 \rvert}


\newtheorem{mainthm}{Theorem}

\newtheorem{thm}[subsubsection]{Theorem}
\newtheorem{lem}[subsubsection]{Lemma}
\newtheorem{prop}[subsubsection]{Proposition}
\newtheorem{cor}[subsubsection]{Corollary}

\theoremstyle{definition}
\newtheorem{defn}[subsubsection]{Definition}

\theoremstyle{remark}
\newtheorem{rmk}[subsubsection]{Remark}
\newtheorem{eg}[subsubsection]{Example}

\theoremstyle{remark}

\setenumerate[0]{label=\arabic*.,ref=\arabic*}


\title[Deformations and adjoint Selmer groups]{Deformations of polarized automorphic Galois representations and adjoint Selmer groups}
\author{Patrick B. Allen}

\address{Department of Mathematics, University of Illinois at Urbana--Champaign,
Urbana, IL, USA}
\email{pballen@illinois.edu}

\DeclareRobustCommand{\SkipTocEntry}[5]{}

\begin{document}

\subjclass[2010]{11F80, 11R34, 11F70}
\thanks{The author was partially supported by an AMS--Simons Travel Grant. Some of this work was completed while the author was a guest that the Max Planck Institute for Mathematics, and he thanks them for their hospitality.}

\begin{abstract}
We prove the vanishing of the geometric Bloch--Kato Selmer group for the adjoint representation of a Galois representation associated to regular algebraic polarized cuspidal automorphic representations under an assumption on the residual image. 
Using this, we deduce that the localization and completion of a certain universal deformation ring for the residual representation at the characteristic zero point induced from the automorphic representation is formally smooth of the correct dimension. 
We do this by employing the Taylor--Wiles--Kisin patching method together with Kisin's technique of analyzing the generic fibre of universal deformation rings. 
Along the way we give a characterization of smooth closed points on the generic fibre of Kisin's potentially semistable local deformation rings in terms of their Weil--Deligne representations.
\end{abstract}

\maketitle

\setcounter{tocdepth}{2}

\tableofcontents

\section*{Introduction}

Let $F$ be a number field, $S$ a finite set of finite places of $F$ containing all those above a fixed rational prime $p$, and let $F(S)$ be the maximal extension of $F$ unramified outside of $S$ and the Archimedean places. 
Given a $p$-adic representation $V$ of $\Gal(F(S)/F)$, Bloch and Kato \cite{BlochKato} defined certain subspaces
	\[ H_f^1(F(S)/F,V) \subseteq H_g^1(F(S)/F,V) \subseteq H^1(F(S)/F,V) \]
of the Galois cohomology group $H^1(F(S)/F,V)$, known as the \emph{Bloch--Kato Selmer group} and \emph{geometric Bloch--Kato Selmer group}, respectively. 
If $V$ is \mbox{de Rham}, resp. crystalline, then $H_g^1(F(S)/F,V)$, resp. $H_f^1(F(S)/F,V)$, is the subspace of $H^1(F(S)/F,V) = \Ext_{\Q_p[\Gal(F(S)/F)]}^1(\Q_p,V)$ of extensions of the trivial representation by $V$ that are \mbox{de Rham}, resp. crystalline.
When $V$ is absolutely irreducible and \mbox{de Rham}, the Fontaine--Mazur conjecture together with the philosophy of motives predict that $V$ should be the $p$-adic realization of some pure motive.
Inputting this into the conjectures of Beilinson--Bloch and Bloch--Kato, one obtains a conjectural relation between the dimension of $H_f^1(F(S)/F,V)$ and the order of vanishing of the $L$-function of the dual representation of $V$ at the point $s = 1$ \cite{FPR}*{II, \S3.4.5}.
One prediction of this conjecture is that if the representation $V$ is pure of motivic weight zero, then 
	\[ H_f^1(F(S)/F,V) = H_g^1(F(S)/F,V) = 0.\]
This is in accordance with a philosophy of Grothendieck that in a conjectural category of mixed motives, there should be no nontrivial extensions of pure motives of the same weight.

Let $E$ be a finite extension of $\Q_p$. Given any absolutely irreducible pure de Rham representation
	\[ \rho : \Gal(F(S)/F) \lra \GL_d(E), \]
one naturally obtains a pure weight zero representation called the adjoint representation: $\ad(\rho) = \mathfrak{gl}_d(E)$, the Lie algebra of $\GL_d(E)$, with $\Gal(F(S)/F)$-action given by composing $\rho$ with the adjoint action of $\GL_d(E)$. 
Then the Bloch--Kato conjecture predicts
	\[ H_g^1(F(S)/F,\ad(\rho)) = 0.\]
In the case where $\rho$ is the representation arising from an elliptic curve over $\Q$, this prediction was first proved by Flach \cite{FlachSymSquare} by a method using Euler systems, assuming that the elliptic curve has good reduction at $p$, that $p \ge 5$, and that the associated residual representation surjects onto $\GL_2(\F_p)$.
A corollary of the breakthrough work of Wiles and Taylor--Wiles is this vanishing in the case that $\rho$ is the representation coming certain modular forms.
This results from their so-called $R=\mathbb{T}$ theorem that equates a certain universal deformation ring of $\rhobar$ to a Hecke algebra. 
On way to think about this is that the tangent space of the deformation ring they consider at the characteristic zero point corresponding to the modular form is equal to its adjoint Bloch--Kato Selmer group, while the tangent space of the Hecke algebra at that point is trivial, since the Hecke algebra is reduced. 
Their work also had assumptions on the level of the modular form and on the residual representation. 
In \cite{KisinGeoDefs}, Kisin showed the vanishing of $H_g^1(G_{\Q,S},\ad(\rho))$ for modular forms of weight $k \ge 2$ and arbitrary level, assuming only a mild condition on the residual representation, and his proof uses some of the ideas of Taylor--Wiles.
We mention also the result of Weston \cite{WestonGeoEuler} which applies to non-CM forms with certain hypotheses on the level, but has no restriction on the residual representation. 
For general totally real, resp. CM fields, but still in dimension two, one can deduce results of this form from the $R[1/p] = \mathbb{T}[1/p]$ theorems \cite{KisinFinFlat}*{Theorem~3.4.11} and \cite{KW2}*{Propositions~9.2 and 9.3}, resp.  \cite{GeeKisinBM}*{Corollary~3.4.3}, whenever the assumptions of those theorems are satisfied. 

In higher dimensions, one is naturally led to consider the Galois representations associated to regular algebraic polarized cuspidal automorphic representations of $\GL_d$ over CM fields. 
These adjoint representations have a natural extension to a representation of the Galois group of the maximal totally real subfield, and this adjoint Selmer group has a natural interpretation as the tangent space of a polarized deformation ring. 
Although there has been great progress in modularity lifting in this context, almost all of the results prove only $R^{\red} = \mathbb{T}$ and do not imply vanishing of the adjoint Selmer groups, although 
some cases can be deduced using the $R = \bbT$ theorem of Clozel--Haris--Taylor \cite{CHT}*{Theorem~3.5.1}.  
Using a completely different method, namely 
the theory of eigenvarieties, Chenevier \cite{ChenevierFern}*{Theorem~F} proved that the vanishing of this adjoint Selmer group is equivalent to the vanishing of the $H^2$ of this adjoint representation under some local hypotheses. 
This is applicable, in particular, when the deformation problem is unobstructed (for example, see \cite{ChenevierFern}*{Appendix}).

The main observation used in this paper is that one can still use the Taylor--Wiles--Kisin patching method to deduce vanishing of the corresponding adjoint Selmer groups for automorphic Galois representations  provided one knows that the induced points on local deformation rings are smooth. 
Indeed, the method yields a ring $R_\infty$, an $R_\infty$-module $M_\infty$, and a control theorem that relates them to our deformation ring and a space of automorphic forms. 
The most subtle point in proving modularity lifting theorems is to understand the components of $R_\infty$ and how they relate to congruences between automorphic forms.
But if we are only interested in the infinitesimal deformation theory of the characteristic zero point coming from our automorphic Galois representation $\rho$, we can localize and complete at this point, and if we know that $\rho$ determines a smooth point on the local deformation rings, it also determines a smooth point on $R_\infty$. 
Then we can apply the Auslander--Buchsbaum formula to the completion and deduce that the localized and completed deformation ring acts freely on a finite dimensional vector space of cusp forms, from which we can deduce vanishing of the adjoint Selmer group. 
Before stating the main theorems, we set up some notation. 

Let $E$ be a finite extension of $\Q_p$ with ring of integers $\calO$ and residue field $\F$. 
Let $F$ be a CM field with maximal totally real subfield $F^+$. 
Let $\overline{F}$ be some fixed algebraic closure of $F$, and let $c \in \Gal(\overline{F}/F^+)$ be a choice of complex conjugation.
Let $S$ be a finite set of finite places of $F^+$ containing all places above $p$. 
Let $F(S)$ be the maximal extension of $F$ unramified outside of the places in $F$ above those in $S$. 
Note that $F(S)$ is Galois over $F^+$. 
Let
	\[ \rho : \Gal(F(S)/F) \lra \GL_d(E) \]
be a continuous, absolutely irreducible representation, and let $\ad(\rho)$ denote the Lie algebra $\mathfrak{gl}_d(E)$ of $\GL_d(E)$ with the adjoint action $\ad\circ \rho$ of $\Gal(F(S)/F)$. 

We assume there is a continuous totally odd character $\mu : G_{F^+} \rightarrow E^\times$ and an invertible symmetric matrix $P$ such that the pairing $\langle a, b \rangle = \transp a P^{-1} b$ on $E^d$ is perfect and satisfies 
	\[ \langle \rho(\sigma) a, \rho(c\sigma c) b \rangle = \mu(\sigma) \langle a, b \rangle, \]
for all $\sigma \in \Gal(F(S)/F)$. 
Since $\rho$ is absolutely irreducible, $P$ is unique up to scalar. 
We can then extend the action of $\Gal(F(S)/F)$ on $\ad(\rho)$ to an action of $\Gal(F(S)/F^+)$ by letting $c$ act by $X \mapsto -P\,{}^t\! X P^{-1}$, 
and this is independent of the choice of $c$ and of $P$. 
 
Finally, we recall that we can choose a $\Gal(F(S)/F)$-stable $\calO$-lattice in the representation space of $\rho$, so after conjugation we may assume that $\rho$ takes values in $\GL_d(\calO)$. The semisimplification of its reduction modulo the maximal ideal of $\calO$ does not depend on the choice of lattice, and we denote it by $\rhobar : \Gal(F(S)/F) \rightarrow \GL_d(\F)$.

\begin{mainthm}\label{thm:BKA}
Assume $p>2$. 
Assume there is a finite extension $L/F$ of CM fields, a regular algebraic polarizable cuspidal automorphic representation $\Pi$ of $\GL_d(\A_{L})$, and an isomorphism $\iota : \Qbar_p \xrightarrow{\sim}\C$ such that the following hold:
	\begin{enumerate}[label=(\alph*)]
	\item\label{BKA:aut} $\rho|_{G_{L}} \otimes \Qbar_p \cong \rho_{\Pi,\iota}$, where $\rho_{\Pi,\iota}$ is the Galois representation attached to $\Pi$ and $\iota$;
	\item\label{BKA:adequate} $\zeta_p \notin L$ and $\rhobar(G_{L(\zeta_p)})$ is adequate.
	\end{enumerate}
Then 
\begin{enumerate}
\item\label{BKA:BK} 
the geometric Bloch--Kato Selmer group 
	\[ H_g^1(F(S)/F^+,\ad(\rho)) := \ker\big( H^1(F(S)/F^+,\ad(\rho)) \rightarrow \prod_{v|p} 
		H^1(F_v^+,B_{\dR} \otimes_{\Q_p} \ad(\rho)) \big) \]
is trivial.
\end{enumerate}
Moreover,
\begin{enumerate}[resume]
\item\label{BKA:H2} $H^2(F(S)/F^+,\ad(\rho)) = 0$;
\item\label{BKA:H1} letting $H_g^1(F_v^+,\ad(\rho)) := \ker(H^1(F_v^+,\ad(\rho)) \rightarrow H^1(F_v^+,B_{\dR} \otimes_{\Q_p} \ad(\rho)))$ for each $v|p$ in $F^+$, the natural map
	\[ H^1(F(S)/F^+,\ad(\rho)) \lra \prod_{v|p} H^1(F_v^+,\ad(\rho))/H_g^1(F_v^+,\ad(\rho)) \]
is an isomorphism.
\end{enumerate}
\end{mainthm}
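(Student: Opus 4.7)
The strategy is the one outlined in the introduction: apply the Taylor--Wiles--Kisin patching method in the polarized setting to deduce formal smoothness at $\rho$ of a certain global deformation ring, which then translates into the vanishing of the adjoint Selmer group. After a solvable base change to pass to the setting of a regular algebraic polarized cuspidal $\Pi$ on $\GL_d/L$ to which the patching machinery applies, one constructs (following Clozel--Harris--Taylor and the subsequent refinements due to Thorne and others) a formally smooth complete Noetherian local $\calO$-algebra $S_\infty$ with a surjection $S_\infty \twoheadrightarrow \calO$, a complete Noetherian local $\calO$-algebra
\[ R_\infty = R^\loc[[x_1,\ldots,x_g]], \]
where $R^\loc$ is the completed tensor product of the relevant polarized potentially semistable local deformation rings at the places in $S$, and a finite $R_\infty$-module $M_\infty$ which is $S_\infty$-free and whose quotient by the augmentation ideal of $S_\infty$ recovers a patched space of automorphic forms acted on through the global polarized de Rham deformation ring of $\rhobar$. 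Hypothesis (b) of the theorem is exactly what is needed to produce the Taylor--Wiles auxiliary primes, and the numerical balance $\dim R_\infty = \dim S_\infty$ is built into the patching.

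The key step is to verify that the closed point $x_\rho$ determined by $\rho$ is a smooth point of $\Spec R_\infty[1/p]$. Since $R_\infty$ is a power series ring over $R^\loc$, it suffices to check this on each local factor of $R^\loc$. At places not above $p$ this will be arranged by choosing the appropriate local deformation condition on the component cut out by the Weil--Deligne representation of $\rho|_{G_{F_v^+}}$, together with the usual ``minimal at ramification'' trick. At places $v \mid p$ the required smoothness is the content of the characterization promised in the abstract of smooth closed points on the generic fibre of Kisin's potentially semistable deformation rings in terms of their associated Weil--Deligne representations, and the crucial observation is that the Weil--Deligne parameter of $\rho|_{G_{F_v^+}}$, coming from a regular algebraic cuspidal automorphic representation, automatically satisfies the required genericity condition. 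This local analysis at $p$ is the main obstacle of the paper.

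Granting local smoothness, we localize and complete at $x_\rho$. Because $M_\infty$ is $S_\infty$-free and $\dim R_\infty = \dim S_\infty$, the depth of $(M_\infty)^\wedge_{x_\rho}$ over the regular local ring $(R_\infty)^\wedge_{x_\rho}$ equals its Krull dimension, so Auslander--Buchsbaum forces $(M_\infty)^\wedge_{x_\rho}$ to be free of some rank $r \geq 1$ over $(R_\infty)^\wedge_{x_\rho}$. Quotienting by the augmentation ideal of $S_\infty$ and inverting $p$ then shows that the completed localization of the global polarized de Rham deformation ring at $x_\rho$ is formally smooth over $E$ of dimension zero, hence equal to $E$. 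Its reduced tangent space, computed as a polarized adjoint Selmer group with de Rham local conditions at $p$, is (after identifying polarized adjoint cohomology of $F$ with ordinary adjoint cohomology of $F^+$) precisely $H_g^1(F(S)/F^+,\ad(\rho))$, which therefore vanishes, yielding part (1).

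Parts (2) and (3) follow formally from part (1) together with the self-duality of $\ad(\rho)$ via the trace form (compatible with the polarization $\mu$) and the nine-term Poitou--Tate exact sequence applied to the Selmer system with local conditions $H_g^1(F_v^+,\ad(\rho))$ at the places $v \mid p$. This sequence identifies the kernel of the map in (3) with $H_g^1(F(S)/F^+,\ad(\rho))$, which is zero by (1), and its cokernel with the Pontryagin dual of a ``dual'' Selmer group that embeds into $H^2(F(S)/F^+,\ad(\rho))$. Applying the global Euler characteristic formula over the totally real field $F^+$, whose archimedean contributions are controlled by the total oddness of $\mu$, and combining it with the vanishing from (1), one obtains a dimension count forcing both $H^2(F(S)/F^+,\ad(\rho)) = 0$ and the surjectivity of the map in (3). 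The hard part throughout is the local smoothness input at places above $p$; once that is in hand, Taylor--Wiles--Kisin patching, Auslander--Buchsbaum, and Poitou--Tate duality do the rest.
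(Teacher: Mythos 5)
Your treatment of part (1) is essentially the paper's argument: reduce by base change to a situation where the unitary-group machinery applies, run Taylor--Wiles--Kisin patching (with adequacy supplying the auxiliary primes), observe that the automorphic point is smooth on every local factor of $R^{\mathrm{loc}}$ because the associated Weil--Deligne representations are generic (cuspidality plus local-global compatibility, with the ``generic implies smooth'' criterion at $p$ being the paper's Theorem D), and then use depth over $S_\infty$ plus Auslander--Buchsbaum at the completed localization to force the de Rham deformation ring at $x$ to be $E$, whose tangent space is $H^1_g(F(S)/F^+,\ad(\rho))$. One small correction there: at the places of $S$ away from $p$ the paper imposes no local condition at all (it uses the full framed lifting rings), and smoothness at the automorphic point again follows from genericity of the Weil--Deligne representation; no ``minimal at ramification'' choice is made or needed. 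Also note that the passage from $F^+$ to $L^+$ is not an automorphic base change but the elementary fact that restriction is injective on cohomology with characteristic-zero coefficients, so vanishing of the Selmer group over $L^+$ descends to $F^+$; the solvable base change is only used afterwards to arrange split places, conjugate self-duality, and Iwahori level.

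For parts (2) and (3) your route differs in packaging from the paper's, and as written it has two problems. First, the ``dual Selmer group'' appearing in the Poitou--Tate sequence is a subspace of $H^1(F(S)/F^+,\ad(\rho)(1))$ (the Tate dual of $\ad(\rho)$ via the trace form), not something that embeds into $H^2(F(S)/F^+,\ad(\rho))$; it controls the cokernel of the map in (3), while $H^2$ is controlled separately by that dual group together with the local $H^2$'s at places in $S$. Second, and more importantly, the dimension count you invoke does not close without the decisive local input at $p$: one must know that for each $v\mid p$ the image of $H^1(F_v^+,\ad(\rho))$ in $H^1(F_v^+,B_{\dR}\otimes_{\Q_p}\ad(\rho))$ has dimension at most $\dim\Fil^0 D_{\dR}(\ad(\rho))=\tfrac{d(d+1)}{2}[F_v^+:\Q_p]$, which uses the regularity of the Hodge--Tate weights coming from the automorphy hypothesis; combined with the global Euler characteristic (where oddness of $\mu$ gives $\dim H^1-\dim H^2=\tfrac{d(d+1)}{2}[F^+:\Q]$ once $\ad(\rho)^{G_{F^+}}=0$) and the injectivity furnished by part (1), this is exactly the ``numerical coincidence'' that forces $H^2=0$ and the isomorphism in (3). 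The paper carries this out directly (Kisin's argument, Lemma 1.3.8/\ref{thm:cohom}) without invoking Poitou--Tate; your duality formulation can be made to work, but only after supplying this local computation (and, if you want $H^2=0$ via local-to-global injectivity, the vanishing of $H^0(F_v^+,\ad(\rho)(1))$ at all $v\in S$, which again comes from genericity).
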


We refer the reader to the notation section below for any of the notation with which the reader is not familiar, 
to \S\ref{sec:autgalrep} for a discussion of regular algebraic polarizable cuspidal automorphic representations and their associated Galois representations, and to \ref{def:GLadequate} for the definition of an adequate subgroup of $\GL_d(\F)$. 
If $p>2(d+1)$, then any subgroup of $\GL_d(\F)$ acting absolutely irreducibly on $\F^d$ is adequate by a theorem of Guralnick--Herzig--Taylor--Thorne \cite{ThorneAdequate}*{Theorem~A,9}, and the assumption of potential automorphy is satisfied in many cases by work of Barnet-Lamb--Gee--Geraghty--Taylor \cite{BLGGT}*{Theorem~4.5.1}. 
As an example, in \S\ref{sec:sympow} we apply this to certain twists of even symmetric powers of Galois representations associated to elliptic modular forms.
Parts~\ref{BKA:H2} and~\ref{BKA:H1} of Theorem~\ref{thm:BKA} follow from part~\ref{BKA:BK}, using an argument of Kisin (see~\ref{thm:cohom}), and this relies on the ``numerical coincidence" discussed in \cite{CHT}*{\S1}. 
It seems likely that the assumption that $p>2$ can be removed using recent work of Thorne \cite{Thorne2adic}, but we have not checked the details.
Theorem~\ref{thm:BKA} will follow (see \ref{sec:CMThm}) from a slight variant (\ref{thm:BK}). 

A similar result (as well as a result similar to Theorem~\ref{thm:thmC} below) was independently obtained by Breuil--Hellmann--Schraen \cite{BHS}*{Corollaires~4.12 and 4.13}.  
Our results on the Bloch--Kato Selmer group (and on universal deformation rings, c.f. Theorem~\ref{thm:thmC}) are more general, as we make no assumption on the local factors $\pi_v$ of the automorphic representation at places $v|p$, whereas they assume $\pi_v$ is unramified when $v|p$. 
On the other hand, they deduce their results as an application of the construction and study of an interesting object they call a \emph{patched eigenvariety}, and this construction has other applications; for example, to a conjecture of Breuil on locally analytic vectors in completed cohomology \cite{BHS}*{Corollaire~4.4}.

For $\rho$ as in the statement of the theorem, $H_g(F_v^+,\ad(\rho)) = H_f^1(F_v^+,\ad(\rho))$ for any $v|p$ in $F^+$, where 
	\[ H_f^1(F_v^+,\ad(\rho)) : = \ker( H^1(F_v^+,\ad(\rho)) \rightarrow H^1(F_v^+,B_{\cris}\otimes_{\Q_p}\ad(\rho))) \]
is the more common local Bloch--Kato Selmer group (see \ref{rmk:gfsame}), 
so Theorem~\ref{thm:BKA} remains unchanged replacing $H_g^1$ with $H_f^1$ everywhere. 
With this in mind, the reader should compare Theorem~\ref{thm:BKA} with the results of \cite{ChenevierFern}*{\S6}. 
In particular, note that we make no assumption on the restriction of $\rho$ to decomposition groups above $p$ (besides what is naturally implied by assumption \ref{BKA:aut}).

In \S\ref{sec:mainthm}, using cyclic base change we deduce from Theorem~\ref{thm:BKA} a similar theorem for totally real fields. 
To state it, we set up some notation. 
Let $F^+(S)$ be the maximal extension of $F^+$ unramified outside $S$ and all places above $\infty$. 
Let
	\[ \rho : \Gal(F^+(S)/F^+) \lra \GL_d(E) \]
be a continuous, absolutely irreducible representation. 
We assume that $\rho$ satisfies one of the following:
\begin{enumerate}
	\item[(GO)]\label{B:orthogonal} $\rho$ factors through a map $\Gal(F^+(S)/F^+) \rightarrow \GO_d(E)$, that we again denote by $\rho$, with totally even multiplier character;
	\item[(GSp)]\label{B:symplectic} $d$ is even and $\rho$ factors through a map $\Gal(F^+(S)/F^+) \rightarrow \GSp_d(E)$, that we again denote by $\rho$, with totally odd multiplier character.
\end{enumerate}
We will refer to the first as the GO-case, and the second as the GSp-case. 
If we are in the GO-case, then we let $\ad(\rho)$ and $\ad^0(\rho)$ denote the Lie algebra $\mathfrak{go}_d(E)$ of $\GO_d(E)$ and sub-Lie algebra $\mathfrak{so}_d(E)$, respectively, with the adjoint $\Gal(F^+(S)/F^+)$-action $\ad \circ \rho$. 
If we are in the GSp-case, then we let $\ad(\rho)$ and $\ad^0(\rho)$ denote the Lie algebra $\mathfrak{gsp}_d(E)$ of $\GSp_d(E)$ and sub-Lie algebra $\mathfrak{sp}_d(E)$, respectively, with the adjoint $\Gal(F^+(S)/F^+)$-action $\ad \circ \rho$. 

\begin{mainthm}\label{thm:thmB}
Assume $p>2$. 
Assume there is a finite extension $L^+/F^+$ of totally real fields, a regular algebraic polarizable cuspidal automorphic representation $\pi$ of $\GL_d(\A_{L^+})$, and an isomorphism $\iota : \Qbar_p \xrightarrow{\sim}\C$ such that:
	\begin{enumerate}[label=(\alph*)]
	\item $\rho|_{G_{L^+}} \otimes \Qbar_p \cong \rho_{\pi,\iota}$, where $\rho_{\pi,\iota}$ is the Galois representation attached to $\pi$ and $\iota$;
	\item $\rhobar(G_{L^+(\zeta_p)})$ is adequate.
	\end{enumerate}
Then 
\begin{enumerate}
\item\label{thmB:BK} the geometric Bloch--Kato Selmer group 
	\[ H_g^1(F^+(S)/F^+,\ad(\rho)) := \ker\big( H^1(F^+(S)/F^+,\ad(\rho)) \rightarrow \prod_{v|p} 
		H^1(F_v^+,B_{\dR} \otimes_{\Q_p} \ad(\rho)) \big) \]
is trivial. 
\end{enumerate}
Moreover,
\begin{enumerate}[resume]
\item\label{thmB:H2} $H^2(F^+(S)/F^+,\ad^0(\rho)) = 0$;
\item\label{thmB:H1} letting $H_g^1(F_v^+,\ad^0(\rho)) := \ker(H^1(F_v^+,\ad^0(\rho)) \rightarrow H^1(F_v^+,B_{\dR} \otimes_{\Q_p} \ad^0(\rho)))$ for each $v|p$ in $F^+$, the natural map
	\[ H^1(F^+(S)/F^+,\ad^0(\rho)) \lra \prod_{v|p} H^1(F_v^+,\ad^0(\rho))/H_g^1(F_v^+,\ad^0(\rho)) \]
is an isomorphism.
\end{enumerate}
\end{mainthm}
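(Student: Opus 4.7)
The plan is to derive Theorem~\ref{thm:thmB} from Theorem~\ref{thm:BKA} by solvable cyclic base change to an auxiliary imaginary quadratic CM extension $F/F^+$.

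I would first choose $F$ to be unramified outside $S$, linearly disjoint over $F^+$ from $L^+(\zeta_p)\cdot\overline{F^+}^{\ker\rhobar}$, whose quadratic character $\delta_{F/F^+}$ is not a self-twist of $\pi$, and such that every finite place of $L^+$ at which $\pi$ is ramified splits completely in $FL^+/L^+$. Such an $F$ exists by standard Chebotarev-type density arguments. Then $\rho|_{G_F}$ is absolutely irreducible, $\rhobar(G_{FL^+(\zeta_p)})$ is adequate, and by the solvable cyclic base change theorem of Arthur--Clozel there is a regular algebraic polarizable cuspidal automorphic representation $\Pi:=\pi_{FL^+}$ of $\GL_d(\A_{FL^+})$ with $\rho_{\Pi,\iota}\cong \rho_{\pi,\iota}|_{G_{FL^+}}$.

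Next I would translate the $\GO$/$\GSp$ polarization on $\rho$ into polarization data of the form required by Theorem~\ref{thm:BKA} for $\rho|_{G_F}$. Fixing a complex conjugation $c$, a short matrix computation shows that $P':=P\rho(c)$ is symmetric in both cases (in the $\GO$ case using $\rho(c)^t P\rho(c)=\nu(c)P=P$ and $P^t=P$; in the $\GSp$ case using $\rho(c)^tP\rho(c)=\nu(c)P=-P$ and $P^t=-P$), and that the pairing $\langle a,b\rangle:=\transp a P'^{-1}b$ satisfies $\langle\rho(\sigma)a,\rho(c\sigma c)b\rangle=\nu(\sigma)\langle a,b\rangle$ for all $\sigma\in G_F$. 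Setting $\mu:=\nu\cdot\delta_{F/F^+}$ in the $\GO$ case (or $\mu:=\nu$ in the $\GSp$ case) produces a totally odd extension of $\nu|_{G_F}$ to $G_{F^+}$. Theorem~\ref{thm:BKA} then applies and yields
\[
H_g^1(F^+(S)/F^+,\ad(\rho|_{G_F}))=0,
\]
where $c$ acts on $\ad(\rho|_{G_F})$ via $X\mapsto -P'\,\transp X\,P'^{-1}$.

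A direct matrix calculation (verifiable by a $2\times 2$ example) shows that this Theorem~\ref{thm:BKA} action of $c$ coincides with the natural action $\ad\circ\rho(c)$ on the traceless summand $\ad^0(\rho)\subset\ad(\rho)$, and differs from it by the character $\delta_{F/F^+}$ on the complementary one-dimensional scalar summand $E\subset\ad(\rho)$ (which is a $G_{F^+}$-direct summand since $p>2$). The local Bloch--Kato conditions respect this decomposition, so Theorem~\ref{thm:BKA} gives $H_g^1(F^+(S)/F^+,\ad^0(\rho))=0$. Part~\ref{thmB:BK} follows upon noting that the trivial-summand contribution $H_g^1(F^+(S)/F^+,E)$ also vanishes: by the global Euler characteristic formula together with the Poitou--Tate vanishing $H^2(F^+(S)/F^+,\Q_p)=H^0(F^+(S)/F^+,\Q_p(1))^\vee=0$, the space $H^1(F^+(S)/F^+,\Q_p)$ is one-dimensional, spanned by the logarithm of the $p$-adic cyclotomic character; this class is ramified at every $v\mid p$, hence not in the de Rham local subspace $H_g^1(F^+_v,\Q_p)=H^1_{\mathrm{ur}}(F^+_v,\Q_p)$.

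Parts~\ref{thmB:H2} and~\ref{thmB:H1} then follow from the vanishing of $H_g^1(\ad^0(\rho))$ by Kisin's formal argument (see~\ref{thm:cohom}) combined with the numerical coincidence of \cite{CHT}*{\S1}, exactly as in the proof of Theorem~\ref{thm:BKA}. The main technical obstacle is verifying that the cyclic base change preserves cuspidality of $\pi_{FL^+}$ and adequacy of $\rhobar(G_{FL^+(\zeta_p)})$, together with carefully tracking the quadratic twist by $\delta_{F/F^+}$ on the scalar summand when comparing the two $G_{F^+}$-extensions of the natural $G_F$-action on $\ad(\rho|_{G_F})$.
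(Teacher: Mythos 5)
Your overall route is the one the paper itself takes (via Theorem~\ref{thm:symext}): pass to an auxiliary quadratic CM extension $F/F^+$, base change $\pi$ to $FL^+$ by Arthur--Clozel, convert the $\GO/\GSp$ structure into the polarization data required by Theorem~\ref{thm:BKA} with totally odd character $\mu\delta_{F/F^+}$ (GO case) resp.\ $\mu$ (GSp case), and keep track of how the Theorem~\ref{thm:BKA} extension of the adjoint action to $G_{F^+}$ compares with the natural one. Your computations here are correct and match the paper's: $P\rho(c)$ is symmetric, the two $c$-actions agree on $\ad^0(\rho)$, and they differ by $\delta_{F/F^+}$ on the scalar line, so Theorem~\ref{thm:BKA} applied to $\rho|_{G_F}$ kills $H^1_g$ of $\ad^0(\rho)$. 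However, two steps as written have genuine problems.

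First, your requirement that $F/F^+$ be unramified outside $S$ is not a Chebotarev-type condition and can fail outright: for $F^+=\Q$, $S=\{p\}$, $p\equiv 1\pmod 4$, the only quadratic field unramified outside $p$ is the real field $\Q(\sqrt p)$. More seriously, Theorem~\ref{thm:BKA} and Lemma~\ref{thm:cohom} produce statements about $H^i(F(S)/F^+,-)$, whereas Theorem~\ref{thm:thmB} concerns $H^i(F^+(S)/F^+,-)$; for part~\ref{thmB:BK} injectivity of inflation suffices, but for parts~\ref{thmB:H2} and~\ref{thmB:H1} you need the comparison isomorphisms in degrees $1$ and $2$, which your proposal never addresses. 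The paper handles this with no ramification hypothesis on $F$ by observing that $\Gal(F(S)/F^+(S))$ has order at most $2$ and acts trivially on the coefficients. Relatedly, "apply \ref{thm:cohom} exactly as in Theorem~\ref{thm:BKA}" does not work verbatim for $\ad^0(\rho)$ over the totally real field: the Euler characteristic numerology at the infinite places in \ref{thm:cohom} is for the $\calG_d$-adjoint $\mathfrak{gl}_d$ with $\mu(c_v)=-1$, not for $\mathfrak{so}_d$ or $\mathfrak{sp}_d$ with the $\GO/\GSp$ action; one should instead take parts~\ref{BKA:H2} and~\ref{BKA:H1} of Theorem~\ref{thm:BKA} for the full $\mathfrak{gl}_d$, split off the relevant summand, and transfer via the comparison just mentioned. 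Second, your argument for the scalar summand is wrong: there is no duality $H^2(F^+(S)/F^+,\Q_p)\cong H^0(F^+(S)/F^+,\Q_p(1))^\vee$, and the claim that $H^1(F^+(S)/F^+,\Q_p)$ is one-dimensional (spanned by the cyclotomic direction) is precisely Leopoldt's conjecture for $F^+$, which is open. The vanishing you need is nevertheless true and elementary: a geometric class with trivial coefficients is unramified at every $v\mid p$ by \cite{BlochKato}*{Example~3.9}, is automatically unramified at finite $v\nmid p$ (any continuous homomorphism $\calO_{F_v^+}^\times\to\Q_p$ is trivial), and hence is a homomorphism unramified at all finite places, which vanishes by finiteness of the class group --- this is the paper's one-line appeal to class field theory.
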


As in \cite{KisinGeoDefs}*{\S8}, we use vanishing of the geometric Bloch--Kato adjoint Selmer group to deduce smoothness of a certain universal deformation ring at automorphic points, something we now explain. 
Let $\calG_d$ denote the group scheme over $\Z$ that is the semidirect product 
	\[ \calG_d^0 \rtimes \{1,\jmath\} = (\GL_d \times \GL_1) \rtimes \{1,\jmath\}, \]
where $\jmath (g,a) \jmath = (a \transp g^{-1},a)$. 
There is a character $\nu : \mathcal{G}_d \rightarrow \GL_1$ defined by by $\nu(g,a) = a$ and $\nu(\jmath) = -1$. 
Fix a continuous homomorphism 
	\[ \rbar : \Gal(F(S)/F^+) \lra \calG_d(\F) \]
inducing an isomorphism $\Gal(F/F^+) \xrightarrow{\sim} \calG_d(\F)/\calG_d^0(\F)$, as well as a totally odd \mbox{de Rham} character $\mu : \Gal(F(S)/F^+) \rightarrow \calO^\times$ such that $\nu \circ \rbar = \mu \bmod {\frakm_{\calO}}$. 
Let $\rhobar : \Gal(F(S)/F) \rightarrow \GL_d(\F)$ be the composite of $\rbar|_{G_F} : \Gal(F(S)/F) \rightarrow \calG_d^0(\F)$ with the projection $\calG_d^0(\F) \rightarrow \GL_d(\F)$, and assume $\rhobar$ is absolutely irreducible. 
If $p>2$, then with this data $\calS = (F/F^+,S,\calO,\rbar,\mu)$, there is a complete Noetherian local commutative $\calO$-algebra $R_{\calS}$ with residue field $\F$, such that $R_{\calS}$ represents the set-valued functor on the category of complete Noetherian local commutative $\calO$-algebras $A$ with residue field $\F$, that sends $A$ to the set deformations of $\rbar$ to $A$ with multiplier $\mu$, i.e. the set of $1+\M_d(\frakm_A)$-conjugacy classes of homomorphisms 
	\[ r : \Gal(F(S)/F^+) \lra \calG_d(A) \]
satisfying $r \bmod {\frakm_A} = \rbar$ and $\nu\circ r = \mu$ (see \S\ref{sec:global}).

\begin{mainthm}\label{thm:thmC}
Assume $p>2$. 
Let $x$ be a closed point of $\Spec R_{\calS}[1/p]$ with residue field $k$, and let
	\[ r_x : \Gal(F(S)/F^+) \lra \calG_d(k) \]
be the pushforward of (some homomorphism representing) the universal deformation of $\rbar$ via $R_{\calS}[1/p] \xrightarrow{x} k$. 
Let $\rho$ denote the composite of $r|_{G_L} : \Gal(F(S)/F) \rightarrow \calG_d^0(k)$ with the projection $\calG_d^0(k) \rightarrow \GL_d(k)$.

Assume there is a finite extension $L/F$ of CM fields, a regular algebraic polarizable cuspidal automorphic representation $\Pi$ of $\GL_d(\A_{L})$, and an isomorphism $\iota : \Qbar_p \xrightarrow{\sim}\C$ such that the following hold:
	\begin{enumerate}[label=(\alph*)]
	\item\label{smooth:aut} $\rho|_{G_L} \otimes \Qbar_p\cong \rho_{\Pi,\iota}$;
	\item\label{smooth:adequate} $\zeta_p \notin L$ and $\rhobar(G_{L(\zeta_p)})$ is adequate.
	\end{enumerate}
Then the localization and completion $(R_{\calS})_x^\wedge$ is formally smooth over $k$ of dimension $\frac{d(d+1)}{2}[F^+:\Q]$.
\end{mainthm}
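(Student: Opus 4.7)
The plan is to show that $(R_\calS)_x^\wedge$ is a regular local ring of Krull dimension $N := \tfrac{d(d+1)}{2}[F^+:\Q]$; since its residue field is $k$ and it is a complete local Noetherian $k$-algebra, this is equivalent to formal smoothness over $k$ of that dimension. Concretely, I would compute the tangent space dimension to be exactly $N$ (which gives $\dim (R_\calS)_x^\wedge \le N$ by the standard inequality), then invoke the Taylor--Wiles--Kisin patching construction underlying the proof of Theorem~\ref{thm:BKA} to establish $\dim (R_\calS)_x^\wedge \ge N$, forcing equality throughout and hence regularity.

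For the tangent space, one identifies the mod-$\frakm^2$ cotangent space of $(R_\calS)_x^\wedge$ with $H^1(F(S)/F^+, \ad(\rho))$, where $\ad(\rho) = \frakgl_d(k)$ carries the extended $\Gal(F(S)/F^+)$-action built from the polarization (so that the scalar line in $\frakgl_d$ contributes with $c$ acting by $-1$; this is why $\ad$, and not $\ad^0$, appears). The hypotheses of Theorem~\ref{thm:thmC} match those of Theorem~\ref{thm:BKA}, so part~\ref{BKA:H1} of the latter produces an isomorphism
$$H^1(F(S)/F^+, \ad(\rho)) \xrightarrow{\sim} \prod_{v|p} H^1(F_v^+, \ad(\rho))/H_g^1(F_v^+, \ad(\rho)).$$
Combining Bloch--Kato local duality, the self-duality $\ad(\rho) \cong \ad(\rho)^*$ coming from the trace pairing together with the polarization, and the standard Bloch--Kato dimension formulas for de Rham representations applied to $\ad(\rho)(1)$ at each $v | p$ (using the regular Hodge--Tate weights of $\rho|_{G_{F_v^+}}$), one computes the $v$-th factor to have $k$-dimension $\tfrac{d(d+1)}{2}[F_v^+:\Q_p]$, totalling $N$.

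For the lower bound on Krull dimension, I would invoke the Taylor--Wiles--Kisin patching setup from the proof of Theorem~\ref{thm:BKA}. It produces a ring $R_\infty$, realized as a power series ring over the completed tensor product of framed local deformation rings at places in $S$, a finitely generated $R_\infty$-module $M_\infty$, and a control theorem relating $M_\infty$ modulo a regular sequence of patching variables to a space of cusp forms on which a framed version of $R_\calS$ acts. The characterization of smooth closed points on generic fibres of Kisin's potentially semistable local deformation rings via their Weil--Deligne representations (promised in the abstract and used in the proof of Theorem~\ref{thm:BKA}) ensures that $x$ is a smooth point of each local framed deformation ring, hence of $R_\infty$. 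The Auslander--Buchsbaum formula, applied to $(M_\infty)_x^\wedge$ over the regular local ring $(R_\infty)_x^\wedge$, then forces $(M_\infty)_x^\wedge$ to be free of the correct depth, and the control theorem propagates the expected global dimension $N$ down to $(R_\calS)_x^\wedge$.

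The hardest step is the local Bloch--Kato computation in the second paragraph: not the formalism itself, but the careful bookkeeping with Hodge numbers of $\ad(\rho)$ and the scalar/trace line in $\frakgl_d$, for representations $\rho|_{G_{F_v^+}}$ that are only assumed de Rham and may be genuinely potentially semistable. A secondary delicate point is ensuring that the patching control theorem remains meaningful after localizing and completing at the characteristic-zero point $x$ rather than at the residual point of $R_\calS$; this is precisely where the abstract's characterization of smooth closed points on generic fibres, together with Kisin's technique of analyzing generic fibres of universal deformation rings, earns its keep.
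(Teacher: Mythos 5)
Your first half (the tangent space) is essentially the paper's own route: the tangent space of $(R_{\calS})_x^\wedge$ is $H^1(F(S)/F^+,\ad(r_x))$ with the full $\calG_d$-adjoint action on $\frakgl_d$ (no local conditions are imposed in $\calS$), and its dimension is pinned to $\frac{d(d+1)}{2}[F^+:\Q]$ by combining $H_g^1(F(S)/F^+,\ad(r_x))=0$ from \ref{thm:BK} with local bounds at $v|p$, exactly as in \ref{thm:cohom}. Two small caveats: only the upper bound $\dim_k H^1(F_v^+,\ad(r_x))/H_g^1(F_v^+,\ad(r_x)) \le \frac{d(d+1)}{2}[F_v^+:\Q_p]$ is needed (so your ``exact'' Bloch--Kato computation, which would in fact require genericity of $\WD(\rho|_{G_w})$ and not just regularity of the Hodge type, can be avoided), and you must treat $v|p$ that are inert or ramified in $F$, since Theorem~\ref{thm:thmC} does not assume such places split.

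The genuine gap is your lower bound $\dim (R_{\calS})_x^\wedge \ge \frac{d(d+1)}{2}[F^+:\Q]$: patching cannot deliver it. The objects $R_\infty$ and $M_\infty$ in the proof of \ref{thm:BK} are constructed only after replacing $F/F^+$ by a solvable CM extension in which every place of $S$ splits, and they control a deformation ring with local conditions (framings at $\tildeS$, potentially semistable conditions of fixed type and Hodge type at $p$) — not the unconditioned ring $R_{\calS}$ of Theorem~\ref{thm:thmC} over the original $F/F^+$, whose $S$ may contain non-split places and for which (as the paper notes after \ref{def:globaldatum}) base change and descent are not available. Moreover, even for that conditioned ring, what the Auslander--Buchsbaum step yields is that its completion at $x$ equals the residue field, i.e.\ a dimension-zero statement equivalent to $H_g^1=0$; there is no mechanism by which the control theorem ``propagates the expected global dimension'' $\frac{d(d+1)}{2}[F^+:\Q]$ to $(R_{\calS})_x^\wedge$. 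The paper instead obtains the lower bound by elementary obstruction theory: part~\ref{bigglobalcomplete:dim} of \ref{thm:bigglobalcomplete} presents $(R_{\calS})_x^\wedge$ as a power series ring in $g=\dim_k H^1(F(S)/F^+,\ad(r_x))$ variables modulo at most $\dim_k H^2(F(S)/F^+,\ad(r_x))$ relations, and the global Euler characteristic formula \ref{thm:EPchar} together with total oddness of $\mu$ (each $\mu(c_v)=-1$) gives $g-r \ge \frac{d(d+1)}{2}[F^+:\Q]$; this is the content of \ref{thm:globalsmooth}, and Theorem~\ref{thm:thmC} is then precisely \ref{thm:BK} combined with \ref{thm:globalsmooth}. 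Replace your patching step by this argument and the rest of your plan closes up: tangent dimension $\le \frac{d(d+1)}{2}[F^+:\Q]$, Krull dimension $\ge \frac{d(d+1)}{2}[F^+:\Q]$, hence equality and regularity.
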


Theorem~\ref{thm:thmC} is deduced in \ref{sec:smooth} as an immediate consequence of \ref{thm:BK} and \ref{thm:globalsmooth}. 

A related application of Theorem~\ref{thm:BKA} is to the geometry of unitary eigenvarieties. 
Bella\"{i}che and Chenevier showed that at certain classical automorphic points on a unitary eigenvariety (see \cite{BellaicheChenevierBook}*{\S7.6.2}), the weight map is \'{e}tale and  that a ``refined deformation ring" (see \cite{BellaicheChenevierBook}*{Definition~7.6.2 and Proposition 7.6.3}) is isomorphic to the completed local ring of the structure sheaf of the eigenvariety at these points \cite{BellaicheChenevierBook}*{Corollary~7.6.11}, provided that \cite{BellaicheChenevierBook}*{Conjecture~7.6.5~(C1)} holds. 
Theorem~\ref{thm:BKA} implies \cite{BellaicheChenevierBook}*{Conjecture~7.5.6~(C1)} if the representation denoted $V$ there further  satisfies the assumptions of Theorem~\ref{thm:BKA}.   
Thus, Theorem~\ref{thm:BKA} establishes the conjectures of \cite{BellaicheChenevierBook}*{\S7.6} at points satisfying the assumptions of Theorem~\ref{thm:BKA}. 
We refer the reader to \cite{BellaicheChenevierBook}*{\S7.6} for more details and for precise statements. 
These conjectures of Bella\"{i}che--Chenevier were also proved by Breuil--Hellmann--Schraen \cite{BHS}*{Corollaire~4.11}, under essentially the same hypotheses. 
We also note that many cases of this were proved by completely different methods by Chenevier \cite{ChenevierFern}*{Theorems~4.8 and~4.10}.  
%
%

An important step in the proofs of our main theorems is the fact that the points coming from regular algebraic polarizable cuspidal automorphic representations on certain local deformation rings are smooth. 
By local-global compatibility, the Weil--Deligne representations attached to their local factors satisfy the property that they do not admit any nontrivial morphisms to their Tate twist. 
We say below (\ref{def:genericWD}) that such Weil--Deligne representations are \emph{generic}. 
We show (see \ref{thm:smoothlp}) that this property characterizes smooth points on local potentially semistable deformation rings. 
As this may be of independent interest, we state it here. 

\begin{mainthm}\label{thm:thmD}
Let $K$ be a finite extension of $\Q_p$, let
	\[ \rhobar : \Gal(\overline{K}/K) \lra \GL_d(\F) \]
be a continuous homomorphism, and let $R_{\rhobar}^\square$ be the universal framed deformation ring for $\rhobar$. 
Fix a Galois type $\tau$ and a $p$-adic Hodge type $\mathbf{v}$. 
Let $R_{\rhobar}^\square(\tau,\mathbf{v})$ denote Kisin's $\calO$-flat potentially semistable framed deformation ring with fixed Galois type $\tau$ and $p$-adic Hodge type $\mathbf{v}$. 
Let $x$ be a closed point of $\Spec R_{\rhobar}^\square(\tau,\mathbf{v})[1/p]$ with residue field $k$. 
Let $\rho_x : \Gal(\overline{K}/K) \rightarrow \GL_d(k)$ be the pushforward of the universal lift via $R_{\rhobar}^\square[1/p] \rightarrow R_{\rhobar}^\square(\tau,\mathbf{v})[1/p] \xrightarrow{x} k$, and let $\WD(\rho_x)$ be its associated Weil--Deligne representation. 

Then $R_{\rhobar}^\square(\tau,\mathbf{v})[1/p]$ is smooth at $x$ if and only if there is no nonzero map $\WD(\rho_x) \rightarrow \WD(\rho_x)(1)$ of Weil--Deligne representations.
\end{mainthm}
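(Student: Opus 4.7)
The plan is to compare the tangent-space dimension of $R_{\rhobar}^\square(\tau,\mathbf{v})[1/p]$ at $x$ to its Krull dimension via the Bloch--Kato local formula, and then to reinterpret the resulting vanishing condition in terms of the Weil--Deligne representation. By Kisin's equidimensionality theorem for his potentially semistable deformation rings, the Krull dimension at $x$ equals $d^2 + [K:\Q_p]\dim_E X_{\mathbf{v}}$, where $X_{\mathbf{v}}$ is the partial flag variety of Hodge filtrations of type $\mathbf{v}$. To compute the tangent space at $x$, I would observe that a framed deformation of $\rho_x$ to $k[\eps]$ lies in this ring if and only if it is potentially semistable of type $(\tau,\mathbf{v})$; since the $p$-adic Hodge and Galois types are locally constant on the potentially semistable locus and pst $=$ de Rham locally by Fontaine's $p$-adic monodromy theorem, this reduces to being de Rham. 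Hence
$$\dim_k T_x = \dim_k H^1_g(G_K, \ad(\rho_x)) + d^2 - \dim_k H^0(G_K, \ad(\rho_x)).$$

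Next, I would apply the Bloch--Kato formula
$$h^1_g(G_K, V) - h^0(G_K, V) = h^0(G_K, V^*(1)) + [K:\Q_p]\dim_E V - \dim_E \Fil^0 D_{\dR}(V)$$
for a de Rham $V$ to the case $V = \ad(\rho_x)$. A direct combinatorial computation of $\dim_E \Fil^0 D_{\dR}(\ad(\rho_x))$ in terms of the Hodge type $\mathbf{v}$ identifies $[K:\Q_p]\dim_E \ad(\rho_x) - \dim_E \Fil^0 D_{\dR}(\ad(\rho_x))$ with $[K:\Q_p]\dim_E X_{\mathbf{v}}$. Combined with the self-duality $\ad(\rho_x)^* \cong \ad(\rho_x)$ via the trace pairing, this yields
$$\dim_k T_x - \dim R_{\rhobar}^\square(\tau,\mathbf{v})[1/p] = \dim_k \Hom_{G_K}(\rho_x, \rho_x(1)),$$
so smoothness of $R_{\rhobar}^\square(\tau,\mathbf{v})[1/p]$ at $x$ is equivalent to the vanishing $\Hom_{G_K}(\rho_x, \rho_x(1)) = 0$.

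Finally, I would translate this into the stated Weil--Deligne condition. By Colmez--Fontaine, $D_{\pst}$ identifies $\Hom_{G_K}(\rho_x, \rho_x(1))$ with filtered $(\phi, N, G_K)$-module maps $D_{\pst}(\rho_x) \to D_{\pst}(\rho_x)(1)$; forgetting the Hodge filtration gives an injection into $\Hom_{\mathrm{WD}}(\WD(\rho_x), \WD(\rho_x)(1))$, which yields the ``if'' direction of the statement. The main obstacle is the converse: one must show that a nonzero $(\phi, N, G_K)$-equivariant map $D_{\pst}(\rho_x) \to D_{\pst}(\rho_x)(1)$ of the underlying Weil--Deligne modules gives rise to a nonzero such map that also respects the Hodge filtrations. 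I expect this to follow from a careful analysis of the Newton and Hodge polygons of the kernel and image of such a map, exploiting the weak admissibility of both $D_{\pst}(\rho_x)$ and its Tate twist.
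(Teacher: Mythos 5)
Your overall framework---identify the tangent space at $x$ with $Z_g^1(K,\ad(\rho_x))$, compare its dimension with Kisin's dimension formula, and conclude via the Bloch--Kato local dimension formulas---is the same as the paper's, but your Bloch--Kato step is stated incorrectly and this propagates into a false intermediate criterion. The correct consequence of \cite{BlochKato}*{Proposition~3.8 and Corollary~3.8.4} is
\[
\dim_{\Q_p} H_g^1(K,V) - \dim_{\Q_p} H^0(K,V) \;=\; \dim_{\Q_p} D_{\dR}(V)/\Fil^0 D_{\dR}(V) \;+\; \dim_{\Q_p} D_{\cris}(V^*(1))^{\varphi=1},
\]
whereas you have replaced the last term by $\dim H^0(K,V^*(1))$. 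These can differ: $H^0(K,\ad(\rho_x)(1)) = \Hom_{G_K}(\rho_x,\rho_x(1))$ takes the Hodge filtration into account (it sits inside $D_{\cris}(\ad(\rho_x)(1))^{\varphi=1}$, in general strictly), while the crystalline-Frobenius invariants do not, and it is the latter that measures the defect of smoothness. Consequently your intermediate claim that smoothness at $x$ is equivalent to $\Hom_{G_K}(\rho_x,\rho_x(1))=0$ is false, and the step you defer to the end---upgrading a nonzero map $\WD(\rho_x)\rightarrow\WD(\rho_x)(1)$ of Weil--Deligne representations to a nonzero map respecting the filtrations---is not a technical gap but an impossibility: weak admissibility is exactly what obstructs it, so no Newton/Hodge polygon analysis can succeed. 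Concretely, take $\rho_x$ a nonsplit crystalline extension of the trivial character by the cyclotomic character (the example noted in the paper after \ref{thm:lpgeneric}): then $\Hom_{G_K}(V_{\rho_x},V_{\rho_x}(1))=0$, yet $\WD(\rho_x)\cong 1\oplus\abs{\cdot}$ admits an evident nonzero map to $\WD(\rho_x)(1)\cong \abs{\cdot}\oplus\abs{\cdot}^2$, so by the theorem this point is \emph{not} smooth, whereas your criterion would declare it smooth.

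Once the formula is corrected, the proof closes and your final step becomes unnecessary: smoothness at $x$ is equivalent to $D_{\cris}(\ad(\rho_x)(1))^{\varphi=1}=0$ (using the trace pairing to identify $\ad(\rho_x)^*(1)\cong\ad(\rho_x)(1)$), and \ref{thm:lpgeneric}---which identifies $D_{\st,L}(\ad(\rho_x)(1))$ with the internal Hom of $(\varphi,N,G_{L/K})$-modules, with no filtration condition anywhere---shows this vanishing is precisely the statement that there is no nonzero map $\WD(\rho_x)\rightarrow\WD(\rho_x)(1)$. This is exactly how the paper's proof of \ref{thm:smoothlp} proceeds; the rest of your outline (Kisin's equidimensionality, the identification of the tangent space with $Z_g^1$, and the computation of $\dim_E\ad(D)/\ad(D)^+$) agrees with it.
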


We refer the reader to \S\S\ref{sec:WD} and~\ref{sec:local} for the definitions of any of the terms or object in the statement of Theorem~\ref{thm:thmD} with which the reader is not familiar. 
The idea of the proof of Theorem~\ref{thm:thmD} is quite simple: mimic the $\ell \ne p$ case. 
If $K$ is a finite extension of $\Q_{\ell}$ with $\ell \ne p$, then the smooth closed points in the generic fibre of the universal framed deformation ring 
are precisely the ones whose Weil--Deligne representation satisfy this genericity hypothesis (as observed in \cite{BLGGT}*{Lemma~1.3.2}, see also \ref{thm:localdefl} below). 
Indeed, a result of Gee \cite{GeeTypes}*{Theorem~2.1.6} gives the dimension of the generic fibre of this framed deformation ring, so it suffices to check when the tangent space, which is related to an $H^1$, has strictly larger dimension, and this can be related to our genericity condition using local Euler characteristic and local Tate duality. 
The proof of Theorem~\ref{thm:thmD} is similar, replacing Gee's dimension result with Kisin's \cite{KisinPssDefRing}*{Theorem~3.3.4}, the $H^1$ with the local geometric Bloch--Kato Selmer group $H_g^1$, and local Euler characteristic and local Tate duality with Bloch and Kato's local dimension formulae and local duality results \cite{BlochKato}*{\S3}. 
In fact, for the proofs of Theorems~\ref{thm:BKA}, \ref{thm:thmB}, and~\ref{thm:thmC}, we will only need the direction ``generic implies smooth", but the author has decided to include the converse here because he finds it interesting in its own right: 
the criterion is the same as in the $\ell \ne p$ case, exhibiting a sort of ``independence of $p$" phenomena for the geometry of potentially semistable deformation rings. 

\addtocontents{toc}{\SkipTocEntry}
\subsection*{Outline} The paper is organized as follows.

In \S\ref{sec:deftheory}, we recall and prove the relevant facts regarding the deformation theory of Galois representations. 
We first recall some properties of Weil--Deligne representations in \ref{sec:WD}; namely the construction of Weil--Deligne representations attached to local Galois representations and their connection with smooth admissible representations of $\GL_d$. 
In \S\ref{sec:local}, we treat the local theory of Galois deformations with an emphasis on describing the smooth points in the generic fibre of local deformation rings; in particular, we prove Theorem~\ref{thm:thmD} (\ref{thm:smoothlp}).  
The global theory is then discussed in \S\ref{sec:global}, the main point being to use Kisin's technique of analyzing the generic fibre of deformation rings to connect them to the Bloch--Kato Selmer group (\ref{thm:smallglobalcomplete}), and to prove some dimension and smoothness results that are necessary for the proof of Theorem~\ref{thm:thmC} (\ref{thm:globalsmooth}). 

We discuss the automorphic theory necessary in \S\ref{sec:auttheory}; everything here is standard. 
We first recall the properties of Galois representations associated to regular algebraic polarized cuspidal automorphic representations of CM and totally real fields in \S\ref{sec:autgalrep}. 
We then describe their connection to automorphic forms on definite unitary groups in \S\ref{sec:unitary}, the main point being to show how one can construct a certain module of automorphic forms with an action of a certain Galois deformation ring, and that the vanishing of the Bloch--Kato Selmer group in question is implied by a freeness property (\ref{thm:heckedef}).

We conclude our efforts in \S\ref{sec:mainsec}, proving the main theorem (\ref{thm:BK}) in \S\ref{sec:mainthm}, and deducing Theorems~\ref{thm:BKA}, \ref{thm:thmB}, and~\ref{thm:thmC} from the introduction in \S\ref{sec:introthms}.
Finally, in \S\ref{sec:sympow} we given an application to certain twists of symmetric powers of modular Galois representations.

\subsubsection*{Acknowledgments}

I would like to thank Mark Kisin for helpful conversations and for helpful comments on an earlier version of this paper. 
I would like thank Rebecca Bellovin and Frank Calegari for helpful conversations, and Florian Herzig for helpful comments on an earlier draft of this paper.
I would like to thank John Bergdall for pointing out the application to the work of Bella\"{i}che and Chenevier, Chandrashekhar Khare for pointing out that I could deduce the vanishing of the $H^2$, and David Hansen for pointing out the application to certain twists of symmetric powers of modular Galois representations. 
I would like to thank Matthew Emerton for bringing my attention to the article \cite{KisinGeoDefs}, which started my interest in this project. 
Finally, I would like to thank the referee for their helpful comments and corrections.

\section*{Notation and conventions}

If $F$ is a number field and $v$ is a place of $F$, we denote by $F_v$ the completion of $F$ at $v$, and if $v$ is non-Archimedean we denote the ring of integers by $\calO_{F_v}$. 
We let $\A_F$ denote the ring of adeles of $F$, and  $\A_F^\infty$ the ring of finite adeles. 

If $K$ is any field with a fixed algebraic closure $\overline{K}$, we denote by $G_K$ the absolute Galois group $\Gal(\overline{K}/K)$. 
If $K$ is a non-Archimedean local field, we denote by $I_K$ the inertia subgroup, by $W_K$ the Weil group, and by $\Frob_K$ the geometric Frobenius in $G_K/I_K \cong W_K/I_K$. In the case that $K$ is the completion of a number field $F$ at a finite place $v$, we write $G_v$, $I_v$, and $\Frob_v$ for $G_{F_v}$, $I_{F_v}$, and $\Frob_{F_v}$, respectively. 
If $L/F$ is a Galois extension of number fields inside some fixed algebraic closure $\overline{F}$ of $F$, and $S$ is a finite set of finite places of $F$, we let $L(S)$ denote the maximal extension of $L$ that is unramified outside of any of the places in $L$ above those in $S$ and the Archimedean places. 
A CM extension of a totally real field is always assumed to be imaginary. 
Given a CM field $F$ with maximal totally real subfield $F^+$, we denote by $\delta_{F/F^+}$ the nontrivial $\{\pm 1\}$-valued character of $\Gal(F/F^+)$. 
Given a finite separable extension of fields $L/F$, we write $\Nm_{L/F}$ for the norm from $L$ to $F$.

If $K$ is a non-Archimedean local field, we let $\Art_K : K^\times \xrightarrow{\sim} W_K$ be the Artin reciprocity map normalized so that uniformizers are sent to geometric Frobenius elements. 
For a number field $F$, we let $\Art_F : F^\times \backslash \A_F^\times \rightarrow G_F^{\ab}$ be $\Art_F = \prod_{v} \Art_{F_v}$.
For any $d\ge 1$, we let $\rec_K$ be the Local Langlands reciprocity map that takes irreducible admissible representations of $\GL_d(K)$ to Frobenius semi-simple Weil--Deligne representations, normalized as in \cite{HarrisTaylor} and \cite{HenniartLL}. 
We then let $\recT_K$ be given by 
$\recT_K(\pi) = \rec_K(\pi \otimes \abs{\cdot}^{\frac{1-d}{2}})$.

If we are given an isomorphism $\iota : K \xrightarrow{\sim} L$ of fields, and $F\hookrightarrow K$ is an embedding of fields, we write $\iota\lambda$ for $\iota\circ\lambda$. If $\lambda = (\lambda_i)_{i\in I}$ is a tuple of field embeddings $F_i \hookrightarrow K$, then we write $\iota\lambda$ for the tuple $(\iota\lambda_i)_{i\in I}$. If  $r : G \rightarrow \Aut_K(V)$ is a representation of a group $G$ on a $K$-vector space $V$, then we will denote by $\iota r$ the representation of $G$ on the $L$-vector space $V\otimes_{K,\iota}L$. 

If $G$ is a group, $A$ is a commutative ring, and $\rho : G \rightarrow \GL_n(A)$ is a homomorphism, then we will let $V_\rho$ denote the representation space of $\rho$, i.e. $V_\rho = A^n$ with the $A[G]$-module structure coming from $\rho$.

We denote by $\epsilon$ the $p$-adic cyclotomic character. 
We let $B_{\mathrm{cr}}$, $B_{\st}$, and $B_{\dR}$ denote Fontaine's rings of crystalline, semistable, and \mbox{de Rham} periods, respectively. 
We will frequently use the Berger--Andr\'{e}--Kedlaya--Mebkhout Theorem \cite{BergerReppadicDifEq}*{Th\'{e}or\`{e}me~0.7}, that \mbox{de Rham} representation are potentially semistable, without comment. 
We use covariant $p$-adic Hodge theory, and normalize our Hodge--Tate weights so that the Hodge--Tate weight of $\epsilon$ is $-1$. 
If $K$ and $E$ are two algebraic extensions of $\Q_p$ with $K/\Q_p$ finite, $\tau : K \hookrightarrow E$ is a continuous embedding, and $\rho : G_K \rightarrow \GL(V) \cong \GL_d(E)$ is a \mbox{de Rham} representation, we will write $\HT_\tau(\rho)$ for the multiset of $d$ Hodge--Tate weights with respect to $\tau$. 
Specifically, an integer $i$ appears in $\HT_\tau(\rho)$ with multiplicity equal to the $E$-dimension of the $i$th graded piece of the $d$-dimensional filtered $E$-vector space $D_{\dR}(\rho)\otimes_{(K\otimes_{\Q_p}E)} E$, where $D_{\dR}(\rho) = (B_{\dR}\otimes_{\Q_p} V_{\rho})^{G_K}$ and we view $E$ as a $K\otimes_{\Q_p} E$-algebra via $\tau \otimes 1$.  
We will say a continuous representation $\rho : G_F \rightarrow \GL(V) \cong \GL_d(E)$ of the Galois group of a number field $F$ is \mbox{de Rham}, resp. semistable, resp. crystalline, if $\rho|_{G_v}$ is so for every $v|p$ in $F$. 
%


If $A$ is a commutative local ring, we will denote by $\frakm_A$ its maximal ideal. If $A$ is a commutative ring and $x: A \rightarrow D$ is a homomorphism with $D$ a domain, then we denote by $A_x$ the localization of $A$ at $\ker(x)$, and $A_x^\wedge$ the localization and completion of $A$ at $\ker(x)$. If $A$ is a commutative ring and $x \in \Spec A$ has residue field $k_x$, we again denote by $x$ the map $x : A \rightarrow k_x$.

If $k$ is a characteristic $0$ field and $R$ is a commutative $k$-algebra, we will say $x\in \Spec R$ is formally smooth if $k \rightarrow R_x$ is formally smooth. 
This is equivalent to $k \rightarrow R_x^\wedge$ being formally smooth, which is equivalent to $R_x^\wedge$ being isomorphic to a power series ring over it's residue field, since $k$ has characteristic $0$. 
We will use these equivalences without further comment.

If $B$ is a local commutative Noetherian ring, we let $\CNL_B$ be the category of complete local commutative Noetherian $B$-algebras $A$ such that the structure map $B \rightarrow A$ induces an isomorphism $B/\frakm_B \xrightarrow{\sim} A/\frakm_A$, and whose morphisms are local $B$-algebra morphisms. 
We will refer to an object, resp. a morphism, in $\CNL_B$ as a $\CNL_B$-algebra, resp. a $\CNL_B$-morphism. 
The full subcategory of Artinian objects is denoted $\Ar_B$. 

If $G$ is a topological group, and $W$ is a topological $G$-module, the cohomology groups $H^i(G,W)$ are always assumed to be the continuous cohomology groups, i.e. the cohomology groups computed with continuous cochains. 
If $M/L$ is a Galois extension, and $W$ is a topological $\Gal(M/L)$-module, we write $H^i(M/L,W)$ for $H^i(\Gal(M/L),W)$. 
If $\overline{L}$ is an algebraic closure of $L$, we write $H^i(L,W)$ for $H^i(\overline{L}/L, W)$. 
If $K$ is a finite extension of $\Q_p$, and $W$ is a finite dimensional $\Q_p$-vector space with a continuous $\Q_p$-linear $G_K$-action, we set
	\[ H_g^1(K,W) := \ker( H^1(K,W) \rightarrow H^1(K,B_{\dR}\otimes_{\Q_p} W)). \]
If $F$ is a number field, $M$ is Galois extension of $F$, and $W$ is a finite dimensional $\Q_p$-vector space with a continuous $\Q_p$-linear $\Gal(M/F)$-action, we set
	\[ H_g^1(M/F,W) := \ker \big( H^1(M/F,W) \rightarrow \prod_{v|p} H^1(F_v,B_{\dR}\otimes_{\Q_p} W) \big). \]


\section{Deformation theory}\label{sec:deftheory}

Throughout this section $E$ will denote a finite extension of $\Q_p$ with ring of integers $\calO$ and residue field $\F$. 

\subsection{Weil--Deligne representations}\label{sec:WD}
Let $\ell$ be a rational prime, and let $K$ be a finite extension of $\Q_\ell$ with ring of integers $\calO_K$ and uniformizer $\varpi_K$. 
Let $q$ denote the cardinality of the residue field of $K$, Let $\abs{\cdot}$ denote the absolute value on $K$ normalized so that $\abs{\varpi_K} = q^{-1}$. 
For $w \in W_K$, we will write $\abs{w}$ for $\abs{\Art_K^{-1}(w)}$. 
If $k$ is a field, and $\rho : G_K \rightarrow \GL_d(k)$ is a homomorphism, we let $\ad(\rho)$ denote $\mathfrak{gl}_d(k)$ with the adjoint action $\ad\circ \rho$ of $G_K$. 
Note that $\ad(\rho) \cong \Hom_k(V_\rho,V_\rho)$ and $\ad(\rho)(1) \cong \Hom_k(V_\rho,V_\rho(1))$ as $k[G_K]$-modules. 

\subsubsection{}\label{sec:genericWD} 
We recall some basics of Weil--Deligne representations (see \cite{TateCorvallis}*{\S4}). Given a characteristic $0$ field $\Omega$, a \emph{Weil--Deligne representation over} $\Omega$ is a pair $(r,N)$, where $r : W_K \rightarrow \GL(V) \cong \GL_d(\Omega)$ is a representation of $W_K$ on a finite dimensional $\Omega$-vector space $V$ with open kernel, and $N\in \End_\Omega(V)$ is nilpotent, such that $r(w)Nr(w)^{-1} = \abs{w}N$ for all $w\in W_K$. A morphism of Weil--Deligne representations $(r_1,N_1) \rightarrow (r_2,N_2)$ is a an $\Omega$-linear morphism that intertwines the $r_i$ and the $N_i$. A Weil--Deligne representation $(r,N)$ is called \emph{Frobenius-semisimple} if $r$ is semisimple (equivalently, if $r(\Phi)$ is semisimple for $\Phi \in W_K$ a lift of the Frobenius). Given a Weil--Deligne representation $(r,N)$, we will denote by $(r,N)^{\Fss}$ its Frobenius-semisimplification, i.e. $(r,N)^{\Fss} = (r^{\mathrm{ss}},N)$. 
Given a Weil--Deligne representation $(r,N)$, we let $(r,N)(1) = (r(1),N)$ be the Weil--Deligne representation with $r(1)(w) = \abs{w}r(w)$. If $\iota\in\Aut(\Omega)$, we let $\iota(r,N) = (\iota r, \iota N)$ denote the Weil Deligne representation obtained by change of scalars via $\iota : \Omega \xrightarrow{\sim}\Omega$ (this is again a Weil--Deligne representation since $\abs{w}\in \Q$ for all $w\in W_K$).

\begin{defn}\label{def:genericWD} We say a Weil--Deligne representation $(r,N)$ is \emph{generic} if there is no nontrivial morphism $(r,N) \rightarrow (r(1),N)$.
\end{defn}

If $\pi$ is an irreducible admissible representation of $\GL_d(K)$ over $\C$ and $\iota\in \Aut(\C)$, then $\recT_K(\iota\pi) = \iota\recT_K(\pi)$ (this is explained when $d=2$ in \cite{BushnellHenniartLLGL2}*{\S35} and the argument there generalizes using \cite{BushnellHenniartHD2}*{Theorem~3.2} and the converse theorems of \cite{HenniartEpsilon}). If $\Omega$ is field isomorphic to $\C$ (as abstract fields), we get a bijection, again denoted by $\recT_K$, between isomorphism classes of irreducible admissible representations of $\GL_d(K)$ over $\Omega$ and isomorphism classes of $d$-dimensional Frobenius semi-simple Weil--Deligne representations over $\Omega$ by fixing any isomorphism $\iota : \Omega \xrightarrow{\sim} \C $ and setting $\recT_K(\pi) = \iota^{-1}\recT_K(\iota\pi)$,
and this is independent of the choice of $\iota$.

\begin{lem}\label{thm:automorphicWD}
Let $\pi$ be an irreducible smooth admissible representation of $\GL_d(K)$ on an $\Omega$-vector space, with $\Omega$ a field (abstractly) isomorphic to $\C$.
Then $\recT_K(\pi)$ is generic if and only if $\pi$ is generic.
\end{lem}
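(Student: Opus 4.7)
The plan is to reduce both notions of genericity to a combinatorial check on multisegments via the Bernstein--Zelevinsky classification of irreducible admissible representations of $\GL_d(K)$. First, choosing any abstract isomorphism $\iota : \Omega \xrightarrow{\sim} \C$ and using that both genericity of $\pi$ and genericity of $\recT_K(\pi)$ are preserved under such isomorphisms (for $\recT_K(\pi)$ via the compatibility $\recT_K(\iota\pi) = \iota\recT_K(\pi)$ just recalled), we may assume $\Omega = \C$.

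By Bernstein--Zelevinsky, an irreducible admissible $\pi$ of $\GL_d(K)$ is generic if and only if it is isomorphic to a full parabolic induction $\Ind_P^{\GL_d(K)}(\delta_1 \otimes \cdots \otimes \delta_r)$ of essentially square-integrable representations $\delta_i$, this induction being irreducible; by Zelevinsky's irreducibility criterion, the latter is equivalent to the segments $\Delta_i := [\sigma_i, \sigma_i|\cdot|, \ldots, \sigma_i|\cdot|^{n_i-1}]$ attached to the $\delta_i$ being pairwise unlinked. Under the local Langlands correspondence, $\recT_K(\delta_i) \cong \Sp(n_i) \otimes \sigma_i$ is indecomposable, and $\recT_K$ carries parabolic induction to direct sums, so $\recT_K(\pi) \cong \bigoplus_i (\Sp(n_i) \otimes \sigma_i)$. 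Hence the lemma reduces to showing that $V := \bigoplus_i (\Sp(n_i) \otimes \sigma_i)$ admits no nonzero morphism to $V(1)$ if and only if the segments $\Delta_i$ are pairwise unlinked.

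Using the decomposition $\Hom_{\WD}(V, V(1)) = \bigoplus_{i,j} \Hom_{\WD}(V_i, V_j(1))$, one reduces to a pairwise check. For the diagonal $i = j$, a $W_K$-equivariant map from $V_i = \Sp(n) \otimes \sigma$ to its Tate twist $V_i(1)$ must, on the level of the standard basis of $\Sp(n)$, shift indices down by one to match $W_K$-weights; commuting with $N$, together with the boundary relation $N e_{n-1} = 0$, then forces every coefficient to vanish. For $i \ne j$, writing $\sigma_j \cong \sigma_i \otimes |\cdot|^c$ (with the Hom being zero unless such an integer $c$ exists), a parallel index-chase shows that $\Hom_{\WD}(V_i, V_j(1)) \ne 0$ precisely when $c \le -1$ and $c + n_j \le n_i - 1$, i.e.\ exactly when $\Delta_j$ lies strictly to the left of $\Delta_i$ in a linked configuration. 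Interchanging $i$ and $j$ covers the reverse orientation, so $\Hom_{\WD}(V, V(1)) \ne 0$ iff some pair $(\Delta_i, \Delta_j)$ is linked, matching the Bernstein--Zelevinsky criterion and completing the proof.

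The main obstacle will be the bookkeeping in the off-diagonal direct computation: one must simultaneously track the support of the putative $\phi$ on each Jordan block, the $W_K$-weight matching (which determines on which pairs of indices $\phi$ can be nonzero), and the boundary relations at both ends of each block, verifying that the constraints imposed by $N$-commutativity collapse to exactly the linkage inequalities $c \le -1$ and $c + n_j \le n_i - 1$. The argument is entirely elementary, but the combinatorics needs care; as an alternative, one could shorten this by invoking known descriptions of $L$-factors $L(s, \pi \times \pi^\vee) = L(s, \End(V))$, but the direct approach keeps the proof self-contained.
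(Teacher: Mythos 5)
Your overall route is the same as the paper's: reduce to $\Omega=\C$, attach to $\pi$ its multisegment of essentially square-integrable representations, use compatibility of the correspondence with this classification to write the Weil--Deligne representation as $\bigoplus_i \Sp(n_i)\otimes\sigma_i$, reduce genericity of this sum to pairwise $\Hom$-computations between twisted Speh blocks, and finish with Zelevinsky's Theorem~9.7 (generic if and only if the segments are pairwise unlinked). One small logical point: the step ``$\recT_K$ carries parabolic induction to direct sums, so $\recT_K(\pi)\cong\bigoplus_i\Sp(n_i)\otimes\sigma_i$'' only applies literally when $\pi$ \emph{is} the full induction, i.e.\ when $\pi$ is generic, which is what you are trying to detect; for a general irreducible $\pi$ you must invoke compatibility of $\rec_K$ with the Langlands classification (the direct sum is attached to $\pi$ as a Langlands quotient), which is exactly what the paper quotes from \cite{HarrisTaylor}*{Theorem~VII.2.20}.

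The genuine problem is the stated outcome of your off-diagonal index chase. The correct computation gives $\Hom_{\WD}\bigl(\Sp(n_i)\otimes\sigma_i,\,(\Sp(n_j)\otimes\sigma_j)(1)\bigr)\neq 0$ if and only if $\sigma_j\cong\sigma_i\otimes\abs{\cdot}^{c}$ with $-n_j\le c\le -1$ \emph{and} $c+n_j\le n_i-1$; you have dropped the lower bound $c\ge -n_j$ (equivalently $c+n_j\ge 0$), which is precisely the condition that $\Delta_i\cup\Delta_j$ is again a segment. Your two inequalities only say that $\Delta_j$ starts and ends strictly before $\Delta_i$, and they are satisfied by gapped, disjoint configurations: take $\sigma_i=1$, $n_i=1$, $\sigma_j=\abs{\cdot}^{-2}$, $n_j=1$, so $c=-2\le -1$ and $c+n_j=-1\le n_i-1$, yet $\Hom_{\WD}(1,\abs{\cdot}^{-1})=0$, the segments $[0,0]$ and $[-2,-2]$ are unlinked, and the corresponding $\pi$ is an irreducible (hence generic) induction whose parameter $1\oplus\abs{\cdot}^{-2}$ is generic. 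So the criterion as written would brand a generic parameter nongeneric and the claimed equivalence with pairwise unlinkedness fails. The fix is just to finish the bookkeeping: the constraint at the low end of the target block (the lowest-weight vector of the source has nowhere to go unless $c+n_j\ge 0$, and $N$-equivariance then propagates the vanishing) produces the missing inequality, after which the ordered pairs $(i,j)$ and $(j,i)$ together detect exactly the linked pairs and the proof closes as in the paper, whose analogous criterion amounts to $\pi_i\cong\pi_j\otimes\abs{\cdot}^{a}$ with $\max(1,\,s_j-s_i+1)\le a\le s_j$. Your verbal gloss ``exactly when $\Delta_j$ lies strictly to the left of $\Delta_i$ in a linked configuration'' is the right statement; the displayed inequalities do not yet capture it.
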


\begin{proof}
This is essentially identical to \cite{BLGGT}*{Lemma~1.3.2(1)}. We give the details. It suffices to consider the case $\Omega = \C$. Since $\pi$ is generic if and only if $\pi\otimes\abs{\cdot}^{\frac{1-d}{2}}$ is generic, it is equivalent to show that $\pi$ is generic if and only if $\rec_K(\pi)$ is generic. Note that if $(r,N) = \rec_K(\pi)$, then $(r(1),N) = \rec_K(\pi\otimes\abs{\cdot})$.

We will use the notation and terminology of~\cite{HarrisTaylor}*{\S1.3}. There are positive integers $s_i,d_i$ for $i = 1,\ldots, t$ with $d = d_1s_1 + \cdots + d_ts_t$ and irreducible supercuspidal representations $\pi_i$ of $\GL_{d_i}(K)$ such that
	\[ \pi \cong \Sp_{s_1}(\pi_1) \boxplus \cdots \boxplus \Sp_{s_t}(\pi_t), \]
and the multiset $\{(s_1,\pi_1),\ldots,(s_t,\pi_t)\}$ is uniquely determined by $\pi$. 
By abuse of notation, we also denote by $\Sp_s$ the $s$-dimensional Weil--Deligne representation $(r,N)$ on a complex vector space with basis $e_0,\ldots,e_{s-1}$, where $r(w) = \abs{w}^i e_i$ for each $i = 0,\ldots,s-1$, and $Ne_i = e_{i+1}$ for each $i = 0,\ldots, s-2$ and $N e_{s-1}=0$. 
Then (see \cite{HarrisTaylor}*{Theorem~VII.2.20} and the discussion preceding it)
	\[ \rec_K (\pi) = (\rec_K(\pi_1)\otimes \Sp_{s_1})\oplus \cdots \oplus (\rec_K(\pi_t)\otimes\Sp_{s_t}),
	\]
and $\rec_K(\pi)$ is nongeneric if and only if 
	\begin{equation}\label{eqn:genericnonzero} \Hom_{\WD}(\rec_K(\pi_i)\otimes \Sp_{s_i}, \rec_K(\pi_j\otimes\abs{\cdot}) \otimes \Sp_{s_j}) \ne 0
	\end{equation}
for some $i,j$. Since $\rec_K(\pi_i)$ is absolutely irreducible (as $\pi_i$ is supercuspidal), it is easy to check that
\eqref{eqn:genericnonzero} holds if and only if 
$\pi_i \cong \pi_j \otimes\abs{\cdot}^a$ with $s_j - s_i < a \le s_j$. In the notation and terminology of \cite{Zelevinsky} (see \cite{Zelevinsky}*{\S3.1 and \S4.1}), this happens if and only if the segments $[\pi_i,\ldots,\pi_i\otimes\abs{\cdot}^{s_i-1}]$ and $[\pi_j,\ldots,\pi_j\otimes\abs{\cdot}^{s_j-1}]$ are linked, which happens if and only if $\pi$ is non-generic by \cite{Zelevinsky}*{Theorem~9.7} (note \emph{generic} is called \emph{non-degenerate} in \cite{Zelevinsky}).
\end{proof}

\subsubsection{}\label{sec:lnotpWD} Assume that $\ell \ne p$, and let
	\[ \rho : G_K \lra \GL_d(E) \]
be a continuous representation. Following \cite{TateCorvallis}*{\S4.2}, we can attach a Weil--Deligne representation to our fixed $\rho$, that we will denote $\WD(\rho)$, as follows. Fix $\Phi \in G_K$ mapping to the geometric Frobenius in $G_K/I_K$. Fix a surjection $t_p: I_K \rightarrow \Z_p$, and let $\tau_p\in I_K$ be such that $t_p(\tau_p) = 1$. The homomorphism $t_p$ necessarily factors through tame inertia. Write $\rho(\tau_p) = \rho(\tau_p)^{ss}\rho(\tau_p)^u$ with $\rho(\tau_p)^{ss}$ semisimple and $\rho(\tau_p)^u$ unipotent. Set $N = \log(\rho(\tau_p)^{u})$. Then the map $r: W_K \rightarrow \GL_d(E)$ given by
	\begin{equation}\label{eqn:WDrep} r(\Phi^n\sigma) = \rho(\Phi^n\sigma)e^{-t_p(\sigma)N}
	\end{equation}
for $n\in \Z$ and $\sigma\in I_K$, is well-defined with open kernel, and $(r,N)$ is a Weil--Deligne representation. The isomorphism class does not depend on the choices made, and we denote any element in this isomorphism class by $\WD(\rho)$. Moreover, this assignment (which depends on $\Phi$ and $t_p$) gives an equivalence of categories from the category of continuous representations $\rho: G_K \rightarrow \GL_d(E)$ to the full subcategory of Weil--Deligne representations $(r,N)$ on $E^d$ such that $r$ has bounded image. From this we deduce the following lemma.

\begin{lem}\label{thm:lnotpgeneric} Let $\rho : G_K \rightarrow \GL_d(E)$ be a continuous representation. The Weil--Deligne representation $\WD(\rho)$ is generic if and only if $\Hom_{E[G_K]}(V_\rho,V_\rho(1)) = 0$.
\end{lem}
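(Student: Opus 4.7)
The plan is to reduce the statement to the fact that the functor $\WD$ from continuous representations of $G_K$ on finite $E$-vector spaces to Weil--Deligne representations (with bounded image) is fully faithful -- indeed, an equivalence onto its essential image, as recalled in \S\ref{sec:lnotpWD}.

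The key compatibility I need is that $\WD(V_\rho(1)) \cong \WD(\rho)(1)$ as Weil--Deligne representations. Since $\ell \ne p$, the extension $K(\mu_{p^\infty})/K$ is unramified, so the $p$-adic cyclotomic character $\epsilon : G_K \to \Z_p^\times$ is unramified and satisfies $\epsilon(\Phi) = q^{-1} = \abs{\Phi}$ (recall $\Phi$ is a geometric Frobenius and the normalizations fixed above give $\abs{\Art_K^{-1}(\Phi)} = \abs{\varpi_K} = q^{-1}$). The representation $V_\rho(1)$ is $V_\rho$ with $G_K$-action twisted by $\epsilon$. Because $\epsilon$ kills $I_K$, the monodromy operator attached to $V_\rho(1)$ via the recipe \eqref{eqn:WDrep} is the same $N$, while for $w = \Phi^n \sigma \in W_K$ with $\sigma \in I_K$ the Weil part becomes
\[ (\epsilon(w)\rho(w)) e^{-t_p(\sigma) N} = \abs{w} r(w). \]
Thus $\WD(V_\rho(1)) = (r \otimes \abs{\cdot}, N) = (r(1), N) = \WD(\rho)(1)$.

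With this identification in hand, the equivalence of categories of \S\ref{sec:lnotpWD} gives a natural isomorphism
\[ \Hom_{\WD}\bigl(\WD(\rho),\, \WD(\rho)(1)\bigr) \;=\; \Hom_{\WD}\bigl(\WD(V_\rho),\, \WD(V_\rho(1))\bigr) \;\cong\; \Hom_{E[G_K]}(V_\rho, V_\rho(1)). \]
By Definition~\ref{def:genericWD}, the left-hand side vanishes precisely when $\WD(\rho)$ is generic, and the right-hand side vanishes precisely under the hypothesis in the lemma, so the two conditions are equivalent.

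There is no real obstacle here; the only thing to verify carefully is the twist compatibility $\WD(V_\rho(1)) = \WD(\rho)(1)$, which hinges on the unramifiedness of $\epsilon|_{G_K}$ for $\ell \ne p$ together with the sign/normalization conventions fixed earlier. Everything else is a formal consequence of the equivalence of categories already stated in \S\ref{sec:lnotpWD}.
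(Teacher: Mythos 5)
Your proof is correct and follows the same route the paper takes: the paper deduces the lemma directly from the equivalence of categories recalled in \S\ref{sec:lnotpWD}, with the twist compatibility $\WD(V_\rho(1)) \cong \WD(\rho)(1)$ (which you verify explicitly, using that $\epsilon|_{G_K}$ is unramified with $\epsilon(\Phi) = q^{-1} = \abs{\Phi}$) left implicit. Your write-up simply makes that implicit step explicit.
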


\subsubsection{}\label{sec:pWD} Assume $\ell = p$, and let
	\[ \rho : G_K \lra \GL_d(E) \]
be a continuous potentially semistable representation. Following Fontaine, \cite{Fontaineladic}*{\S1.3 and \S2.3}, we can also associate a Weil--Deligne representation to $\rho$, again denoted $\WD(\rho)$, as follows.

Let $L/K$ be a finite extension, let $G_{L/K} = \Gal(L/K)$, and let $L_0$ be the maximal subfield of $L$ unramified over $\Q_p$. We assume that $E$ contains all embeddings of $L_0$ into an algebraic closure of $E$. A $(\varphi,N,G_{L/K})$-\emph{module} $D$ over $E$ is a finite free $L_0 \otimes_{\Q_p} E$-module together with operators $\varphi$ and $N$, and an action of $\Gal(L/K)$, satisfying the following:
	\begin{itemize}
	\item $N$ is $L_0 \otimes_{\Q_p} E$-linear;
	\item $\varphi$ is $E$-linear and satisfies $\varphi(ax) = \sigma(a)\varphi(x)$ for any $x\in D$ and $a\in L_0$, where $\sigma\in\Gal(L_0/\Q_p)$ is the absolute arithmetic Frobenius;
	\item $N\varphi = p\varphi N$;
	\item the $\Gal(L/K)$-action is $E$-linear and $L_0$-semilinear, and commutes with $\varphi$ and $N$.
	\end{itemize}
Extend the action of $\Gal(L/K)$ to $W_K$ by letting $I_L$ act trivially. For $w\in W_K$, we let $v(w)\in \Z$ be such that the image of $w$ in $W_K/I_K$ is $\sigma^{-v(w)}$. We then define an $L_0\otimes_{\Q_p}E$-linear action, that we denote $r_D$, of $W_K$ on $D$ by $r_D(w) = w\varphi^{v(w)}$. Writing $L_0 \otimes_{\Q_p} E = \prod_{\tau : L_0 \hookrightarrow E} E$, we get a decomposition $D = \prod_{\tau : L_0 \hookrightarrow E} D_\tau$ and an induced $d$-dimensional Weil--Deligne representation $(r_\tau,N_\tau)$ over $E$ on each factor $D_\tau$. The isomorphism class of $(r_\tau, N_\tau)$ is independent of $\tau : L_0 \hookrightarrow E$ (see \cite{BreuilMezardMultMod}*{\S2.2.1}), and we denote any element in its isomorphism class by $\WD(D)$. Moreover, by \cite{BSpLL}*{Proposition~4.1}, this assignment induces an equivalence of categories from $(\varphi,N,G_{L/K})$-modules over $E$ to Weil--Deligne representations over $E$ on which $I_L$ acts trivially. Given a $(\varphi,N,G_{L/K})$-module $D$ over $E$, we let $D(1)$ be the $(\varphi, N, G_{L/K})$-module with the same underlying $L_0\otimes_{\Q_p} E$-module, operator $N$, and $G_{L/K}$-action, but with $\varphi_{D(1)} = p^{-1} \varphi_D$. Note that $\WD(D(1)) = \WD(D)(1)$.

Now choose $L/K$ such that $\rho|_{G_{L}}$ is semistable and such that $E$ contains all embeddings of $L$ into an algebraic closure of $E$ (enlarging $E$ if necessary). We get a $(\varphi, N, G_{L/K})$-module
	\[ D_{\st,L}(\rho) := (B_{\st} \otimes_{\Q_p} V_\rho)^{G_{L}}.
	\]
The isomorphism class of $\WD(D_{\st,L}(\rho))$ does not depend on the choice of $L$ (see \cite{BreuilMezardMultMod}*{\S2.2.1}), and we set $\WD(\rho) = \WD(D_{\st,L}(\rho))$.

\begin{lem}\label{thm:lpgeneric}
Let $\rho : G_K \rightarrow \GL_d(E)$ be a potentially semistable representation. Let $L/K$ be a finite extension such that $\rho|_{G_L}$ is semistable.
\begin{enumerate}
\item\label{lpgeneric:phiN} The Weil--Deligne representation $\WD(\rho)$ is generic if and only if 
there are no nonzero morphisms $D_{\st,L}(\rho) \rightarrow D_{\st,L}(\rho)(1)$ of $(\varphi, N, G_{L/K})$-modules over $E$.
\item\label{lpgeneric:crys} Let $D_{\cris}(\ad(\rho)(1)) = (B_{\cris} \otimes_{\Q_p} \ad(\rho)(1))^{G_K}$ with its induced crystalline Frobenius $\varphi$. Then $\WD(\rho)$ is generic if and only if 
$D_{\cris}(\ad(\rho)(1))^{\varphi = 1} = 0$.
\end{enumerate}
\end{lem}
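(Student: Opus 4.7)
The plan is to prove part~(1) by directly invoking the equivalence of categories from $(\varphi, N, G_{L/K})$-modules over $E$ to Weil--Deligne representations over $E$ on which $I_L$ acts trivially, as recalled above from \cite{BSpLL}*{Proposition~4.1}. Under this equivalence, $D_{\st,L}(\rho)$ corresponds to $\WD(\rho)$, and the Tate twist $D \mapsto D(1)$ on $(\varphi, N, G_{L/K})$-modules corresponds to the Tate twist on Weil--Deligne representations (the identity $\WD(D(1)) = \WD(D)(1)$ recorded in the paragraph defining $D(1)$). Consequently, the $E$-vector space of morphisms $D_{\st,L}(\rho) \to D_{\st,L}(\rho)(1)$ of $(\varphi, N, G_{L/K})$-modules is in bijection with the space of morphisms $\WD(\rho) \to \WD(\rho)(1)$ of Weil--Deligne representations, and the latter vanishes precisely when $\WD(\rho)$ is generic by definition.

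For part~(2), the strategy is to reduce to part~(1) by establishing a canonical isomorphism
\[ D_{\cris}(\ad(\rho)(1))^{\varphi=1} \cong \Hom_{(\varphi, N, G_{L/K})}(D_{\st,L}(\rho), D_{\st,L}(\rho)(1)). \]
Since $\rho|_{G_L}$ is semistable, so is $\ad(\rho)(1)|_{G_L}$, and $D_{\st,L}$ is a tensor functor on semistable $G_L$-representations compatible with internal Hom. Using $\ad(\rho)(1) = \Hom_E(V_\rho, V_\rho(1))$ together with the canonical identification $D_{\st,L}(\rho(1)) \cong D_{\st,L}(\rho)(1)$ (effected by the element $t \in B_{\cris}$ satisfying $\varphi(t) = pt$ and $g(t) = \epsilon(g)t$), one obtains
\[ D_{\st,L}(\ad(\rho)(1)) \cong \Hom_{L_0 \otimes_{\Q_p} E}(D_{\st,L}(\rho), D_{\st,L}(\rho)(1)) \]
as $(\varphi, N, G_{L/K})$-modules, where the right side carries the standard $\varphi$, $N$, and $G_{L/K}$-actions on internal Hom. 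Combining this with the identity $D_{\cris}(W) = D_{\st,L}(W)^{N=0, G_{L/K}}$ (which follows from $B_{\cris} = B_{\st}^{N=0}$) and the observation that morphisms in the category of $(\varphi, N, G_{L/K})$-modules are exactly the $\varphi = 1$, $N = 0$, $G_{L/K}$-invariants of the internal Hom produces the desired isomorphism. Applying part~(1) then concludes the proof.

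The main technical hurdle is pure bookkeeping: tracking the identification $D_{\st,L}(\rho(1)) \cong D_{\st,L}(\rho)(1)$ and correctly computing the induced operators on internal Hom (so that, for instance, $\varphi$ acts by $f \mapsto \varphi_{D_{\st,L}(\rho)(1)} \circ f \circ \varphi_{D_{\st,L}(\rho)}^{-1}$) is standard but easy to misstate in the $\sigma$-semilinear setting over $L_0 \otimes_{\Q_p} E$; once these compatibilities are in hand the two statements of the lemma reduce to formal consequences of the equivalence of categories.
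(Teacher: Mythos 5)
Your proposal is correct and follows essentially the same route as the paper: part~(1) via the equivalence of categories of \cite{BSpLL}*{Proposition~4.1} together with $\WD(D(1)) = \WD(D)(1)$, and part~(2) by identifying $D_{\st,L}(\ad(\rho)(1))$ with the internal Hom of $(\varphi,N,G_{L/K})$-modules (the paper quotes \cite{FontaineSemiStable}*{\S5.6} for this compatibility) and then taking the $\varphi=1$, $N=0$, $G_{L/K}$-invariant elements, which recovers $D_{\cris}(\ad(\rho)(1))^{\varphi=1}$.
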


\begin{proof} Part \ref{lpgeneric:phiN} follows from \cite{BSpLL}*{Proposition~4.1}. We use this to derive part~\ref{lpgeneric:crys}. 
By \cite{FontaineSemiStable}*{\S5.6}, the $(\varphi,N,G_{L/K})$-module $D_{\st,L}(\Hom_{\Q_p}(V_{\rho},V_{\rho}(1)))$ over $\Q_p$ is identified with the $(\varphi,N,G_{L/K})$-module over $\Q_p$ consisting of the $L_0$-vector space of morphisms $D_{\st,L}(\rho) \rightarrow D_{\st,L}(\rho(1))$ with $(\varphi, N, G_{L/K})$-module structure by
	\begin{itemize}
	\item $\varphi f = \varphi\circ f \circ\varphi^{-1}$,
	\item $Nf = N\circ f - f \circ N$,
	\item $\gamma f = \gamma \circ f \circ \gamma^{-1}$, for $\gamma\in G_{L/K}$. 
	\end{itemize}
This identification takes the subspace of elements that commute with $E$ to the subspace of elements that commute with $E$, and we have an isomorphism $D_{\st,L}(\ad(\rho)(1))$ with the space of $L_0\otimes_{\Q_p} E$-morphisms $D_{\st,L}(\rho) \rightarrow D_{\st,L}(\rho(1))$ with the $(\varphi, N, G_{L/K})$-module structure as above. This together with part \ref{lpgeneric:phiN} implies that $\WD(\rho)$ is generic if and only if
	\[
	\{ f \in D_{\st, L}(\ad(\rho)(1))^{G_{L/K}} \mid Nf = 0 \text{ and }\varphi f = f \} = 0.
	\]
The left hand side of this expression is exactly the subspace of $D_{\cris}(\ad(\rho)(1))$ on which $\varphi = 1$. 
\end{proof}

We note that one may have $\Hom_{E[G_K]}(V_\rho,V_\rho(1)) = 0$, but $\WD(\rho)$ nongeneric, for example if $\rho$ is a nonsplit crystalline extension of the trivial character by the cyclotomic character.

\subsection{Local Galois deformation rings}\label{sec:local}
We keep the notation and terminology of the previous subsection.
Fix a continuous representation
	\[ \rhobar : G_K \lra \GL_d(\F). \]
A \emph{lift} of $\rhobar$ to a $\CNL_{\calO}$-algebra $A$ is a continuous homomorphism
	\[ \rho : G_K \lra \GL_d(A) \]
such that $\rho \bmod {\frakm_A} = \rhobar$. 
The set valued functor that sends a $\CNL_{\calO}$-algebra to its set of lifts is representable (see \cite{BockleDefTheory}*{Proposition~1.3}). 
We call the representing object the \emph{universal lifting ring} for $\rhobar$ and denote it by $R_{\rhobar}^\square$. 
We let $\rho^\square : G_K \rightarrow \GL_d(R_{\rhobar}^\square)$ denote the universal lift.

In what follows, if $R$ is a quotient of $R_{\rhobar}^\square$, and $x \in \Spec R[1/p]$ has residue field $k$, we 
let $\rho_x : G_K \rightarrow \GL_d(k)$ denote the specialization of 
$\rho^\square$ via $R^\square[1/p] \rightarrow R[1/p] \xrightarrow{x} k$. 
We then define a \emph{lift} of $\rho_x$ to a $\CNL_k$-algebra $A$ to be a homomorphism 
	\[ \rho : G_K \lra \GL_d(A) \]
such that $\rho \bmod {\frakm_A} = \rho_x$ and such that the induced map $G_K \rightarrow \GL_d(A/\frakm_A^n)$ is continuous for all $n \ge 1$, where we give $A/\frakm_A^n$ the topology as a finite dimensional $k$-vector space.

The proof of our main theorems will rely crucially on Kisin's method for analyzing the generic fibre of universal deformation rings, the linchpin of which is the following result. 

\begin{thm}\label{thm:defcomplete}
Let $x$ be a closed point of $R_{\rhobar}^\square[1/p]$ with residue field $k$. 
\begin{enumerate}
\item\label{defcomplete:rep} The set valued functor that sends a $\CNL_k$-algebra to the set of lifts of $\rho_x$ is represented by the the localization and completion $(R_{\rhobar}^\square)_x^\wedge$ of $R_{\rhobar}$ at $x$.
\item\label{defcomplete:tangent} The tangent space of $\Spec R_{\rhobar}^\square[1/p]$ at $x$ is canonically isomorphic to the space of $1$-cocyles $Z^1(K,\ad(\rho_x))$ of $G_K$ with coefficients in $\ad(\rho_x)$.
\end{enumerate}
%
\end{thm}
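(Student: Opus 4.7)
The plan is to handle part~\ref{defcomplete:rep} first; part~\ref{defcomplete:tangent} will follow by unwinding definitions.

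For part~\ref{defcomplete:rep}, the strategy is Kisin's: the natural $\calO$-algebra map $R_{\rhobar}^\square \to (R_{\rhobar}^\square)_x^\wedge$ pushes forward $\rho^\square$ to a homomorphism $\rho_x^\univ : G_K \to \GL_d((R_{\rhobar}^\square)_x^\wedge)$ reducing to $\rho_x$. First I would verify this is a lift in the sense defined above, i.e. that each $\rho_x^\univ \bmod \frakm^n$ is continuous for the natural $E$-linear topology on the finite-dimensional $k$-vector space $(R_{\rhobar}^\square)_x^\wedge/\frakm^n$. The matrix entries of $\rho_x^\univ \bmod \frakm^n$ lie in the image of $R_{\rhobar}^\square \to (R_{\rhobar}^\square)_x^\wedge/\frakm^n$, which is a finitely generated $\calO$-subalgebra of this $k$-vector space, and continuity reduces to the continuity of $\rho^\square$ modulo an open ideal of $R_{\rhobar}^\square$.

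The core of part~\ref{defcomplete:rep} is the universal property. Given $A \in \CNL_k$ and a lift $\rho : G_K \to \GL_d(A)$ of $\rho_x$, I would construct the required $\CNL_k$-morphism $f : (R_{\rhobar}^\square)_x^\wedge \to A$ by approximation. For each $n$, set $A_n = A/\frakm_A^n$, a finite-dimensional $E$-vector space, and let $\rho_n$ be the induced lift. Using compactness of $G_K$ together with the continuity assumption, the image $\rho_n(G_K)$ stabilizes some $\calO$-lattice in $A_n^d$; equivalently, after possibly enlarging $\calO$, the $\calO$-subalgebra $A_n^\circ \subset A_n$ generated by the matrix entries of $\rho_n$ is a finite $\calO$-algebra with $A_n^\circ[1/p] = A_n$ and residue field $\F$. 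Then $\rho_n$ factors through $\GL_d(A_n^\circ)$ as a lift of $\rhobar$, yielding via the universal property a $\CNL_\calO$-map $R_{\rhobar}^\square \to A_n^\circ$. The composite $R_{\rhobar}^\square \to A_n$ sends $\ker(R_{\rhobar}^\square \to k) = \frakp$ into the nilpotent maximal ideal of $A_n$, hence factors through $(R_{\rhobar}^\square)_x^\wedge \to A_n$; passing to the inverse limit yields $f$, and uniqueness is checked on matrix entries of $\rho_x^\univ(g)$. The main conceptual obstacle here is the topology mismatch: $p$ is topologically nilpotent in $R_{\rhobar}^\square$ but a unit in $(R_{\rhobar}^\square)_x^\wedge$, which is navigated precisely by producing the finite $\calO$-algebras $A_n^\circ$ and working at finite levels.

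For part~\ref{defcomplete:tangent}, by part~\ref{defcomplete:rep} the tangent space at $x$ is identified with
\[
\Hom_{\CNL_k}\bigl((R_{\rhobar}^\square)_x^\wedge, k[\epsilon]/(\epsilon^2)\bigr) = \{\text{lifts of } \rho_x \text{ to } k[\epsilon]/(\epsilon^2)\}.
\]
Writing any such lift as $\tilde\rho(g) = (1 + \epsilon c(g))\rho_x(g)$ for a continuous map $c : G_K \to M_d(k) = \ad(\rho_x)$, the multiplicativity of $\tilde\rho$ expands directly to the $1$-cocycle identity $c(gh) = c(g) + \ad(\rho_x(g))\,c(h)$, providing the canonical bijection with $Z^1(K,\ad(\rho_x))$.
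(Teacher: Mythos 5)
Your proposal is essentially the paper's own route: part~(1) is precisely the content of Kisin's Lemma~2.3.3 and Proposition~2.3.5 of \cite{KisinFinFlat}, which the paper simply cites and whose proof is the finite-level integral descent you sketch (descend each $\rho_n$ to some finite $\calO$-subalgebra $A \subset A_n$ with $A[1/p]=A_n$, apply the universal property of $R^\square_{\rhobar}$, and pass to the limit), and your part~(2) is the paper's one-line identification $\kappa \mapsto (1+\varepsilon\kappa)\rho_x$. One caution on the descent step: ``stabilizes an $\calO$-lattice'' does not literally give that the $\calO$-subalgebra generated by the matrix entries is finite over $\calO$ with $A_n^\circ[1/p]=A_n$ and residue field $\F$ (none of these need hold as stated); rather, one uses that the entries of $\rho_x$ lie in $\calO_k$ (because the image of $R^\square_{\rhobar}\to k$ is a local order, $x$ being a closed point of the generic fibre) together with nilpotence of $\frakm_{A_n}$ to see the entries of $\rho_n$ generate a bounded, hence finite, $\calO$-subalgebra, which one then enlarges to some $A\in\mathrm{Int}(A_n)$ and whose residue field may be a proper extension of $\F$ --- a point handled in Kisin's formulation of the functor but needing a small extra argument if one insists on quoting the universal property of $R^\square_{\rhobar}$ over $\CNL_{\calO}$ verbatim.
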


\begin{proof} 
Part \ref{defcomplete:rep} is \cite{KisinFinFlat}*{Lemma~2.3.3 and Proposition~2.3.5}. 
In fact, \cite{KisinFinFlat}*{Proposition~2.3.5} goes further by identifying certain groupoids, which 
implies what we want (see \cite{KisinFinFlat}*{\S A.5}). 

Using part~\ref{defcomplete:rep}, it is straightforward to check that the map $Z^1(K,\ad(\rho_x)) \rightarrow \Hom_{\CNL_k}((R_{\rhobar}^\square)_x^\wedge,k[\varepsilon])$ given by $\kappa \mapsto (1+\varepsilon \kappa)\rho_x$ is an isomorphism of $k$-vector spaces.
\end{proof}

\begin{prop}\label{thm:localdefl} Assume $\ell \ne p$.
\begin{enumerate}
\item\label{localdefl:dim} 
$\Spec R_{\rhobar}^\square[1/p]$ is equidimensional of dimension $d^2$.  
\item\label{localdefl:smooth} A closed point $x$ of $\Spec R_{\rhobar}^\square [1/p]$ is smooth if and only if $\WD(\rho_x)$ is generic.
\end{enumerate}
\end{prop}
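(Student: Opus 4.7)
The plan is to establish \ref{localdefl:dim} by citing a known dimension result for the $\ell\ne p$ framed deformation ring, and then to deduce \ref{localdefl:smooth} from a clean tangent space calculation using local Euler characteristic and local Tate duality.

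For part \ref{localdefl:dim}, I would invoke Gee's theorem \cite{GeeTypes}*{Theorem~2.1.6} (already referenced by the author in the discussion preceding Theorem~\ref{thm:thmD}), which asserts precisely that every irreducible component of $\Spec R_{\rhobar}^\square[1/p]$ has dimension $d^2$ when $\ell\ne p$. This is essentially a computation with pseudo-representations and the local deformation theory of unramified lifts; no additional input is needed.

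For part \ref{localdefl:smooth}, since $\Spec R_{\rhobar}^\square[1/p]$ is equidimensional of dimension $d^2$ by \ref{localdefl:dim} and $k$ has characteristic zero, the closed point $x$ is smooth if and only if $\dim_k T_x=d^2$, where $T_x$ denotes the Zariski tangent space. Theorem~\ref{thm:defcomplete}\ref{defcomplete:tangent} identifies $T_x$ with the space of continuous $1$-cocycles $Z^1(K,\ad(\rho_x))$. Using the exact sequence $0\to H^0(K,\ad(\rho_x))\to \ad(\rho_x)\to B^1\to 0$ and $Z^1/B^1\cong H^1$, one gets
\begin{equation*}
\dim_k Z^1(K,\ad(\rho_x)) = d^2 - \dim_k H^0(K,\ad(\rho_x)) + \dim_k H^1(K,\ad(\rho_x)).
\end{equation*}
Because $\ell\ne p$, the local Euler characteristic of any finite-dimensional continuous $k$-representation of $G_K$ vanishes, so $\dim H^1 - \dim H^0 = \dim H^2$, and hence $\dim_k Z^1(K,\ad(\rho_x)) = d^2 + \dim_k H^2(K,\ad(\rho_x))$.

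It then suffices to show $H^2(K,\ad(\rho_x)) = 0$ precisely when $\WD(\rho_x)$ is generic. By local Tate duality (applied to the self-dual module $\ad(\rho_x)$, the duality coming from the trace pairing), $H^2(K,\ad(\rho_x))$ is $k$-linearly dual to $H^0(K,\ad(\rho_x)(1)) = \Hom_{k[G_K]}(V_{\rho_x},V_{\rho_x}(1))$. Lemma~\ref{thm:lnotpgeneric} then identifies the vanishing of this last Hom space with the genericity of $\WD(\rho_x)$, completing the proof. The whole argument is quite mechanical once parts~\ref{localdefl:dim} and Lemma~\ref{thm:lnotpgeneric} are in hand; the only subtle point is the correct bookkeeping for $Z^1$ versus $H^1$, which contributes the $d^2$ term matching the global dimension from Gee's theorem.
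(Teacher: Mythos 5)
Your proposal is correct and follows essentially the same route as the paper: part~\ref{localdefl:dim} by citing Gee's theorem, and part~\ref{localdefl:smooth} by identifying the tangent space with $Z^1(K,\ad(\rho_x))$ via \ref{thm:defcomplete}, applying the ($\ell\ne p$) local Euler characteristic to get $\dim_k Z^1 = d^2 + \dim_k H^2(K,\ad(\rho_x))$, and then using Tate local duality (via the trace pairing) together with Lemma~\ref{thm:lnotpgeneric} to equate the vanishing of $H^2$ with genericity of $\WD(\rho_x)$. The only cosmetic difference is that you spell out the $Z^1$ versus $B^1$ bookkeeping that the paper folds into the phrase ``local Euler characteristic formula.''
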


\begin{proof} 
The fact that $\Spec R_{\rhobar}^\square[1/p]$ is equidimensional of dimension $d^2$ is a result of Gee \cite{GeeTypes}*{Theorem~2.1.6} (see also the discussion preceding Proposition~2.1.4 of \cite{GeeTypes}). 
Let $k$ denote the residue field of $x$. 
Then \ref{thm:defcomplete} implies that $x$ is a smooth point if and only if $\dim_k Z^1(K,\ad(\rho_x)) = d^2$. 
By the local Euler characteristic formula,
	\begin{align*} \dim_k Z^1(K,\ad(\rho_x)) & = \dim_k H^1(K,\ad(\rho_x)) + d^2 - \dim_k H^0(K,\ad(\rho_x))\\
	& = d^2 + \dim_k H^2(K,\ad(\rho_x)),
	\end{align*}
so $(R_{\rhobar}^\square)_x^\wedge$ is formally smooth over $k$ if and only if $H^2(K,\ad(\rho_x)) = 0$. 
The trace pairing on $\ad(\rho_x)$ is perfect, so Tate local duality implies that $H^2(K,\ad(\rho_x)) = 0$ if and only if $H^0(K,\ad(\rho_x)(1)) = 0$. 
This is equivalent to $\Hom_{k[G_K]}(V_{\rho_x},V_{\rho_x}(1)) = 0$, which is equivalent to $\WD(\rho_x)$ being generic by \ref{thm:lnotpgeneric}.
\end{proof}

\subsubsection{}\label{sec:types}
Assume that $\ell = p$. A $d$-dimensional \emph{Galois type} over $E$ is a representation $\tau : I_K \rightarrow \GL(V)\cong \GL_d(E)$ of $I_K$ on a $d$-dimensional $E$-vector space $V$ with open kernel that extends to a representation of $W_K$. An $d$-dimensional $p$-\emph{adic Hodge type} over $E$ is a pair $\mathbf{v} = (D, \{\Fil^i\}_{i\in \Z})$, where $D$ is a free $K \otimes_{\Q_p} E$-module of rank $d$, and $\{\Fil^i\}_{i\in \Z}$ is a decreasing, separated, exhaustive filtration on $D$ by $K \otimes_{\Q_p} E$-submodules. We set $\ad(D) = \End_{(K\otimes_{\Q_p} E)}(D)$ and $\ad(D)^+ = \{ f\in \ad(D) \mid f(\Fil^i) \subseteq \Fil^i \text{ for all } i \in \Z \}$. We will say that a $p$-adic Hodge type $\mathbf{v} = (D,\{\Fil^i\}_{i\in\Z})$ over $E$ is \emph{regular} if, writing $K\otimes_{\Q_p} E \cong \prod_i K_i$ as a product of fields, the $d$-dimensional filtered $K_i$-vector space $D\otimes_{K\otimes E} K_i$ has graded pieces of dimension at most $1$. 
It is straightforward to check that if $\mathbf{v}$ is regular, then $\dim_E \ad(D)/\ad (D)^+ = \frac{d(d-1)}{2}[K:\Q_p]$, and this is maximal.
%

Let $\tau : I_K \rightarrow \GL(V)$ and $\mathbf{v} = (D,\{\Fil^i\}_{i \in \Z})$ be a $d$-dimensional Galois type and $p$-adic Hodge type, respectively, over $E$. 
Let $A$ be a finite $E$-algebra and let $V_A$ be a free $A$-module of rank $d$ with a continuous $A$-linear $G_K$-action such that $V_A$ is a potentially semistable representation. Let $(r_A,N_A)$ be the Weil--Deligne representation attached to $V_A$ (viewed as a representation of $G_K$ on a $d(\dim_E A)$-dimensional $E$-vector space). 
We say that $V_A$ has \emph{Galois type} $\tau$ if $r_A|_{I_K} \cong \tau \otimes_E A$. 
Let $D_{\dR}(V_A) = (B_{\dR} \otimes_{\Q_p} V_A)^{G_K}$ together with its natural filtration induced from the filtration on $B_{\dR}$. 
We say that $V_A$ has $p$-\emph{adic Hodge type} $\mathbf{v}$ if for each $i\in \Z$, there is an isomorphism of $K \otimes_{\Q_p} A$-modules
	\[ \gr^i D_{\dR}(V_A) \cong \gr^i(D) \otimes_E A. \]
We can now state the following fundamental result of Kisin, \cite{KisinPssDefRing}*{Theorem~3.3.4}.

\begin{thm}\label{thm:pdefring}
Fix a $d$-dimensional Galois type $\tau$, and a $d$-dimensional $p$-adic Hodge type $\mathbf{v} = (D,\{\Fil^i\}_{i\in\Z})$ over $E$.
There is an $\calO$-flat quotient $R_{\rhobar}^{\square}(\tau,\mathbf{v})$ of $R_{\rhobar}^{\square}$ 
such that
if $A$ is any finite $E$-algebra, an $E$-algebra morphism $x: R_{\rhobar}^{\square}[1/p] \rightarrow A$ factors through $R_{\rhobar}^{\square}(\tau,\mathbf{v})[1/p]$ if and only if $\rho_x$ is potentially semistable with Galois type $\tau$ and $p$-adic Hodge type $\mathbf{v}$.
	 
Moreover, 
if nonzero, then $\Spec R_{\rhobar}^\square(\tau,\mathbf{v})[1/p]$ is equidimensional of dimension $d^2 + \dim_E \ad(D)/\ad(D)^+$, and admits a open dense formally smooth subscheme.
\end{thm}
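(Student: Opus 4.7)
The plan is to follow Kisin's construction from \cite{KisinPssDefRing}: I would realize potentially semistable lifts of $\rhobar$ of prescribed type $(\tau,\mathbf{v})$ as points of an auxiliary moduli space of filtered $(\varphi, N, G_{L/K})$-modules that is better behaved than the deformation ring itself, and then take a scheme-theoretic image inside $\Spec R_{\rhobar}^\square$.

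First, I would choose a finite Galois extension $L/K$, enlarging $E$ if necessary so that $E$ contains every $\Q_p$-embedding of $L$, for which $I_L$ is contained in the open kernel of $\tau$; then any potentially semistable lift of $\rhobar$ with Galois type $\tau$ is automatically semistable over $L$. I would next consider the functor $\mathcal{M}_{\tau,\mathbf{v}}$ over $\Spec R_{\rhobar}^\square$ whose $A$-valued points (for $A$ a finite local $E$-algebra together with a lift $\rho:G_K\to\GL_d(A)$ of $\rhobar$) parametrize weakly admissible filtered $(\varphi, N, G_{L/K})$-module structures on a free $L_0 \otimes_{\Q_p} A$-module of rank $d$ whose associated representation under the Colmez--Fontaine correspondence is $\rho|_{G_L}$, subject to the Galois type being $\tau$ and the Hodge type being $\mathbf{v}$. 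Using Kisin's integral theory, this functor is representable by a projective $\calO$-flat scheme over $\Spec R_{\rhobar}^\square$, and I would define $R_{\rhobar}^\square(\tau,\mathbf{v})$ to be its scheme-theoretic image. The $\calO$-flatness of $R_{\rhobar}^\square(\tau,\mathbf{v})$ then follows from that of $\mathcal{M}_{\tau,\mathbf{v}}$, and the characterization of which $E$-algebra maps factor through $R_{\rhobar}^\square(\tau,\mathbf{v})[1/p]$ is immediate from Colmez--Fontaine: over $E$, every lift of the prescribed type arises from exactly one point of $\mathcal{M}_{\tau,\mathbf{v}}[1/p]$.

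For the dimension formula, the key observation is that $\mathcal{M}_{\tau,\mathbf{v}}[1/p]$ is formally smooth over $E$: once $\tau$ is fixed, the underlying $(\varphi, N, G_{L/K})$-module structure is essentially rigid, and the remaining infinitesimal moduli come from (i) the $d^2$-dimensional framing of the Galois representation and (ii) the choice of filtration on $D$ inside the Grassmannian of filtered $K \otimes_{\Q_p} E$-submodules cutting out Hodge type $\mathbf{v}$, whose tangent space at any such filtration is $\ad(D)/\ad(D)^+$. This yields $\dim \mathcal{M}_{\tau,\mathbf{v}}[1/p] = d^2 + \dim_E \ad(D)/\ad(D)^+$. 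Since the map $\mathcal{M}_{\tau,\mathbf{v}}[1/p] \to \Spec R_{\rhobar}^\square(\tau,\mathbf{v})[1/p]$ is surjective by construction and set-theoretically injective by Colmez--Fontaine, it is birational onto its image on each irreducible component, giving equidimensionality of the target with the claimed dimension. Generic smoothness in characteristic zero then exhibits a dense open subscheme of the target over which this map is an isomorphism, producing the required dense formally smooth open.

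The hardest part, and the main obstacle to turning this sketch into a rigorous proof, is the construction of $\mathcal{M}_{\tau,\mathbf{v}}$ as a projective $\calO$-flat scheme; this is the technical heart of \cite{KisinPssDefRing}, and requires Kisin's integral $p$-adic Hodge theoretic machinery (Breuil modules with descent data) to correctly interpolate weakly admissible filtered modules in families while preserving the $\calO$-integral structure.
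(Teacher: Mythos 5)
You should first be aware that the paper does not prove this statement at all: it is quoted directly as Kisin's theorem, \cite{KisinPssDefRing}*{Theorem~3.3.4}, and the paper's ``proof'' is just that citation. So the relevant comparison is between your sketch and Kisin's actual argument, and there your proposal has genuine gaps rather than being a compressed version of a correct proof. The central one is the auxiliary space $\mathcal{M}_{\tau,\mathbf{v}}$: as you define it, its points parametrize \emph{weakly admissible} filtered $(\varphi,N,G_{L/K})$-module structures matched to $\rho|_{G_L}$ via Colmez--Fontaine, and both weak admissibility and the Colmez--Fontaine correspondence are inherently generic-fibre ($\Q_p$-coefficient) notions; such a functor has no meaning over $\calO/\varpi^n$ or the special fibre, so it cannot be ``representable by a projective $\calO$-flat scheme over $\Spec R^{\square}_{\rhobar}$'' as stated. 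Kisin's integral interpolation is carried out with a different object entirely ($\mathfrak{S}$-modules of bounded $E(u)$-height with a monodromy/connection condition over the open unit disc, not weakly admissible filtered modules), and the passage from that integral moduli problem back to the potentially semistable condition of fixed type on the generic fibre is the substance of the theorem, not a technical detail one can defer; your final paragraph concedes this, but the functor you propose to represent is not one that admits the integral extension your argument needs.

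The dimension and smoothness part is also not correct as written. The assertion that $\mathcal{M}_{\tau,\mathbf{v}}[1/p]$ is formally smooth over $E$ because ``the $(\varphi,N,G_{L/K})$-module structure is essentially rigid'' is false (Frobenius eigenvalues and $N$ vary, and the joint moduli of module plus filtration is not smooth), and in fact it cannot be repaired: since your map $\mathcal{M}_{\tau,\mathbf{v}}[1/p]\rightarrow \Spec R^{\square}_{\rhobar}(\tau,\mathbf{v})[1/p]$ is bijective onto the target, smoothness of the source together with your ``birational'' step would essentially force smoothness of the target everywhere, contradicting the existence of singular closed points — precisely the phenomenon quantified by Theorem~\ref{thm:smoothlp} of this paper (smoothness fails exactly when $D_{\cris}(\ad(\rho_x)(1))^{\varphi=1}\neq 0$) and exhibited concretely in \cite{BellovinSmooth}. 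Relatedly, your tangent count ``$d^2$ (framing) $+\ \dim_E \ad(D)/\ad(D)^+$ (Grassmannian)'' treats the filtration as a free parameter over the representation, but on your own space the filtration is determined by the representation (the map is injective), so the count assumes the period-map identification it is supposed to prove; in Kisin's argument this identification of completed local rings with deformation spaces of framed filtered modules, together with an obstruction analysis, is exactly what yields equidimensionality in dimension $d^2+\dim_E\ad(D)/\ad(D)^+$ and only a \emph{dense open} formally smooth locus. Finally, ``surjective and set-theoretically injective implies birational on each component'' is not a valid inference without further input, though this is minor compared with the two issues above.
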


We now wish to show the analogue of  part \ref{localdefl:smooth} of \ref{thm:localdefl} for the rings $R_{\rhobar}^\square(\tau,\mathbf{v})$. 
For global applications, we will actually only need the fact that $\WD(\rho_x)$ generic implies $R_{\rhobar}^\square(\tau,\mathbf{v})_x^\wedge$ is formally smooth, but for completeness we include the converse.
Our proof will rely on the following standard lemma,
which we will also need for other purposes later. 

\begin{lem}\label{thm:localBK}
Let $x$ be a closed point of $\Spec R_{\rhobar}^\square (\tau,\mathbf{v})[1/p]$. 
The tangent space of $R_{\rhobar}^\square (\tau,\mathbf{v})[1/p]$
at $x$ is canonically 
isomorphic to 
	\[ Z_g^1(K,\ad(\rho_x))
	 := \ker \big(Z^1(K,\ad(\rho_x)) \rightarrow H^1(K, B_{\dR} \otimes_{\Q_p} \ad(\rho_x)) \big). \]
\end{lem}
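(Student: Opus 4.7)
The plan is to chain together the identification in \ref{thm:defcomplete}\ref{defcomplete:tangent} with the universal property of Kisin's ring in \ref{thm:pdefring}. First, I would note that the quotient map $R_{\rhobar}^\square \twoheadrightarrow R_{\rhobar}^\square(\tau,\mathbf{v})$ induces an injection on tangent spaces at $x$, so the tangent space of $R_{\rhobar}^\square(\tau,\mathbf{v})[1/p]$ at $x$ corresponds under the isomorphism $Z^1(K,\ad(\rho_x)) \cong \Hom_{\CNL_k}((R_{\rhobar}^\square)_x^\wedge,k[\varepsilon])$ of \ref{thm:defcomplete}\ref{defcomplete:tangent} to the cocycles $\kappa$ whose associated lift $\rho_\kappa := (1+\varepsilon\kappa)\rho_x$ to $k[\varepsilon]$ is potentially semistable with Galois type $\tau$ and $p$-adic Hodge type $\mathbf{v}$ (applying \ref{thm:pdefring} with the finite $E$-algebra $A = k[\varepsilon]$).

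The heart of the argument is then to show that for a cocycle $\kappa \in Z^1(K,\ad(\rho_x))$ the lift $\rho_\kappa$ is potentially semistable of type $(\tau,\mathbf{v})$ if and only if it is de Rham. One direction is immediate. For the converse, I would use that if $\rho_\kappa$ is de Rham then so is the short exact sequence $0 \to V_{\rho_x} \to V_{\rho_\kappa} \to V_{\rho_x} \to 0$, so $\rho_\kappa$ is potentially semistable, and Fontaine's functors $D_{\dR}$ and $D_{\st,L}$ are exact on it. The Hodge type is preserved because each $\gr^i D_{\dR}(V_{\rho_\kappa})$ is a $K \otimes_{\Q_p} k[\varepsilon]$-module sitting in an extension of $\gr^i(D) \otimes_E k$ by itself, and a length count over the Artinian local ring $K \otimes_{\Q_p} k[\varepsilon]$ (or Nakayama) forces freeness of the correct rank. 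The Galois type is preserved because the $I_K$-action on $\WD(\rho_\kappa)$ factors through the finite group $I_K/I_L$, and any deformation of the $E[I_K/I_L]$-module $\tau$ to a free $k[\varepsilon]$-module in characteristic zero is isomorphic to $\tau \otimes_E k[\varepsilon]$ (by Maschke and idempotent lifting).

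Finally, I would translate "de Rham" into the cohomological condition. The extension class of $V_{\rho_\kappa}$ in $\Ext^1_{k[G_K]}(V_{\rho_x},V_{\rho_x}) \cong H^1(K,\ad(\rho_x))$ is precisely the image $[\kappa]$ of our cocycle under $Z^1 \to H^1$, and $V_{\rho_\kappa}$ is de Rham if and only if this class lies in $H_g^1(K,\ad(\rho_x))$, i.e.\ in the kernel of $H^1(K,\ad(\rho_x)) \to H^1(K,B_{\dR} \otimes_{\Q_p}\ad(\rho_x))$. The preimage of this kernel under $Z^1(K,\ad(\rho_x)) \to H^1(K,\ad(\rho_x))$ is exactly $Z_g^1(K,\ad(\rho_x))$ since coboundaries already map to zero in $H^1(K, B_{\dR} \otimes_{\Q_p} \ad(\rho_x))$. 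The main technical obstacle is the preservation of the Galois and Hodge types in step two; everything else is bookkeeping with exactness of Fontaine's functors.
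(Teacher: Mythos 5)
Your overall route is the same as the paper's: combine \ref{thm:defcomplete} with the universal property in \ref{thm:pdefring} applied to $A=k[\varepsilon]$, so the tangent space is the set of cocycles $\kappa$ for which $\rho_\kappa=(1+\varepsilon\kappa)\rho_x$ is potentially semistable of type $(\tau,\mathbf{v})$, translate that condition into the vanishing of $\kappa$ in $H^1(K,B_{\dR}\otimes_{\Q_p}\ad(\rho_x))$, and dispose of the Galois type by exactness of $D_{\st,L}$ plus semisimplicity of representations of the finite group $I_K/I_L$ in characteristic zero; your final remark that the preimage of $H_g^1$ in $Z^1$ is $Z_g^1$ is also exactly how the paper concludes.

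The one place where your justification, as written, does not work is the Hodge-type step: knowing only that $\gr^i D_{\dR}(V_{\rho_\kappa})$ sits in an extension of $\gr^i(D)\otimes_E k$ by itself, a length count (or Nakayama) over $K\otimes_{\Q_p}k[\varepsilon]$ does \emph{not} force freeness --- for instance a module on which $\varepsilon$ acts by zero has the correct length but is not free. To get freeness you must use the $k[\varepsilon]$-structure: either check $\ker(\varepsilon)=\mathrm{im}(\varepsilon)$ on each graded piece (via strict exactness of $D_{\dR}$ applied to multiplication by $\varepsilon$), or, more cleanly, do what the paper does and what your own last step already provides: de Rham-ness of $\rho_\kappa$ is equivalent to a $G_K$-equivariant, $\varepsilon$-compatible isomorphism $B_{\dR}\otimes_{\Q_p}V_{\rho_\kappa}\cong(B_{\dR}\otimes_{\Q_p}V_{\rho_x})\otimes_k k[\varepsilon]$, from which $D_{\dR}(\rho_\kappa)\cong D_{\dR}(\rho_x)\otimes_k k[\varepsilon]$ as filtered $K\otimes_{\Q_p}k[\varepsilon]$-modules, and the Hodge type $\mathbf{v}$ follows at once, making the separate length argument unnecessary. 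With that repair your proof coincides in substance with the paper's.
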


\begin{proof} 
Let $k$ denote the residue field of $x$. 
Using \ref{thm:defcomplete},
the tangent space of $(R_{\rhobar}^\square)_x^\wedge$ is canonically isomorphic to $Z^1(K,\ad(\rho_x))$. 
Take $\kappa \in Z^1(K,\ad(\rho_x))$, and let $\rho_\kappa = (1+\varepsilon \kappa)\rho_x : G_K \rightarrow \GL_d(k[\varepsilon])$ be the corresponding lift. 
The cocycle $\kappa$ dies in $H^1(K, B_{\dR} \otimes_{\Q_p} \ad(\rho_x))$ if and only if there is a $G_K$-equivariant isomorphism 
	\[ B_{\dR} \otimes_{\Q_p} V_{\rho_\kappa} \cong B_{\dR} \otimes_{\Q_p} (V_{\rho_x}\otimes_k k[\varepsilon]) \cong
	(B_{\dR} \otimes_{\Q_p} V_{\rho_x}) \otimes_k k[\varepsilon], \]
and this happens if and only if $\rho_\kappa$ is potentially semistable with $p$-adic Hodge type $\mathbf{v}$. 
Choosing an extension $L/K$ for which $\rho_\kappa$ is semistable and using the exactness of $D_{\st,L}$ (see \cite{FontaineSemiStable}*{Th\'{e}or\`{e}m~5.1}), we see that the Galois type of $\rho_\kappa$ is an extension of $\tau$ by itself. 
Since $\tau$ is a representation of a finite group in characteristic $0$, it necessarily splits and $\rho_\kappa$ has Galois type $\tau$. 
Hence, $\kappa$ lies in the kernel of $Z^1(K,\ad(\rho_x)) \rightarrow  H^1(K, B_{\dR} \otimes_{\Q_p} \ad(\rho_x))$ if and only if the lift $\rho_\kappa$ is potentially semistable of Galois type $\tau$ and $p$-adic Hodge type $\mathbf{v}$. 
By \ref{thm:pdefring}, the subspace of such elements is the tangent space of $R_{\rhobar}^\square (\tau,\mathbf{v})_x^\wedge$.
\end{proof}

\begin{lem}\label{thm:adlem}
Let $k$ be a finite extension of $\Q_p$ and let $\rho : G_K \rightarrow \GL_d(k)$ be a \mbox{de Rham} representation. 
There is an isomorphism $D_{\dR}(\ad(\rho)) \cong \ad(D_{\dR}(\rho))$ of filtered $K \otimes_{\Q_p} k$-modules.
\end{lem}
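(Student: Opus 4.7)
The plan is to reduce the lemma to the general fact that $D_{\dR}$ is a tensor functor on the category of de Rham representations, valued in filtered $K \otimes_{\Q_p} k$-modules, that commutes with duals and tensor products.

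First, I would identify $\ad(\rho)$ as a $k[G_K]$-module with $V_\rho \otimes_k V_\rho^\vee$, where $V_\rho^\vee = \Hom_k(V_\rho,k)$ carries the contragredient $G_K$-action. Since $V_\rho$ is de Rham, so are $V_\rho^\vee$ and $V_\rho \otimes_k V_\rho^\vee$, and one has natural isomorphisms of filtered $K \otimes_{\Q_p} k$-modules
\[ D_{\dR}(V_\rho^\vee) \cong D_{\dR}(V_\rho)^\vee := \Hom_{K\otimes_{\Q_p}k}(D_{\dR}(V_\rho),K\otimes_{\Q_p}k), \]
and
\[ D_{\dR}(V_\rho \otimes_k V_\rho^\vee) \cong D_{\dR}(V_\rho) \otimes_{K\otimes_{\Q_p}k} D_{\dR}(V_\rho^\vee), \]
the filtration on the target of the latter being the usual tensor product filtration. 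These are standard consequences of the fact that $V \mapsto D_{\dR}(V) = (B_{\dR} \otimes_{\Q_p} V)^{G_K}$ is a symmetric monoidal functor from de Rham representations to filtered $K \otimes_{\Q_p} k$-modules (one checks the natural map is an isomorphism by comparing $K\otimes_{\Q_p}k$-dimensions, both sides being equal to $\dim_k V_\rho$, and similarly for the associated graded).

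Combining these two displayed isomorphisms yields
\[ D_{\dR}(\ad(\rho)) \cong D_{\dR}(V_\rho) \otimes_{K\otimes_{\Q_p}k} D_{\dR}(V_\rho)^\vee \cong \ad(D_{\dR}(\rho)), \]
as filtered $K \otimes_{\Q_p} k$-modules, where the last isomorphism is the standard identification $M \otimes M^\vee \cong \End(M)$ for a finite locally free $K \otimes_{\Q_p} k$-module $M$, which carries the tensor product filtration to the filtration by endomorphisms preserving the filtration up to a given shift.

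There is no real obstacle here: the entire content is the compatibility of $D_{\dR}$ with tensor operations, which is formal given that $B_{\dR}$ is a field and the comparison isomorphism $B_{\dR} \otimes_{K} D_{\dR}(V) \cong B_{\dR} \otimes_{\Q_p} V$ is $G_K$-equivariant and respects filtrations. The only mild care needed is to track that the filtration on $\ad(D_{\dR}(\rho))$ induced by viewing it as $D_{\dR}(V_\rho) \otimes D_{\dR}(V_\rho)^\vee$ agrees with the filtration $\Fil^i \ad(D_{\dR}(\rho)) = \{f : f(\Fil^j) \subseteq \Fil^{i+j}\text{ for all }j\}$, which is a direct unwinding of definitions.
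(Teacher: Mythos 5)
Your argument is correct, but it takes a slightly different route from the paper. You reduce the statement to the general tensor-functoriality of $D_{\dR}$: decompose $\ad(\rho) \cong V_\rho \otimes_k V_\rho^\vee$ and invoke the standard facts (due to Fontaine) that $D_{\dR}$ of a de Rham representation commutes with duals and tensor products as a functor into filtered $K\otimes_{\Q_p}k$-modules, then reassemble via $M\otimes M^\vee \cong \End(M)$. The paper instead never splits off the dual: it writes $\ad(\rho)\cong \End_k(V_\rho)$ and computes directly, using the filtered comparison isomorphism $B_{\dR}\otimes_{\Q_p}V_\rho \cong B_{\dR}\otimes_K D_{\dR}(\rho)$ together with the compatibility of $\End$ with the base change $K\otimes_{\Q_p}k \to B_{\dR}\otimes_{\Q_p}k$, and then takes $G_K$-invariants at the end. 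The two proofs rest on the same underlying input (the filtered comparison isomorphism), but the paper's version is self-contained and keeps the filtration bookkeeping inside one chain of isomorphisms, whereas yours outsources exactly that bookkeeping (strictness of the filtration on tensor products and duals of admissible representations) to the cited general results — which is fine, but those compatibilities with filtrations are the only nontrivial content here, so you should cite them precisely rather than call them formal. One cosmetic slip: in your parenthetical dimension count for $D_{\dR}(V_\rho\otimes_k V_\rho^\vee)$, the relevant $K\otimes_{\Q_p}k$-rank is $d^2$, not $\dim_k V_\rho$; this does not affect the argument.
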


\begin{proof}
Indeed, from $\ad(\rho) \cong \End_k(V_\rho)$ and the fact that $\rho$ and $\ad(\rho)$ are \mbox{de Rham}, we have isomorphisms of filtered $K \otimes_{\Q_p}k$-modules
\begin{align*}
	D_{\dR}(\ad(\rho)) & \cong (B_{\dR} \otimes_{\Q_p} \End_k(V_{\rho}))^{G_K}\\
	& \cong (\End_{(B_{\dR}\otimes_{\Q_p}k)}(B_{\dR}\otimes_{\Q_p} V_{\rho}))^{G_K} \\
	& \cong (\End_{(B_{\dR} \otimes_{\Q_p} k)}(B_{\dR}\otimes_K D_{\dR}(\rho)))^{G_K} \\
	& \cong (B_{\dR}\otimes_K \End_{K\otimes k}(D_{\dR}(\rho)))^{G_K} \\
	& = B_{\dR}^{G_K} \otimes_K \ad(D_{\dR}(\rho))\\
	& = \ad(D_{\dR}(\rho)).\qedhere
	\end{align*}
\end{proof}

\begin{thm}\label{thm:smoothlp}
A closed point $x$ of $\Spec R_{\rhobar}^\square (\tau,\mathbf{v}) [1/p]$
is formally smooth if and only if
$\WD(\rho_x)$ is generic.
\end{thm}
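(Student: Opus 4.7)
The plan is to mimic the structure of the proof of \ref{thm:localdefl}\ref{localdefl:smooth}, replacing Gee's dimension result with Kisin's (\ref{thm:pdefring}), the identification of the tangent space with $Z^1$ with its refinement \ref{thm:localBK}, and the local Euler characteristic and Tate duality with the local dimension formulas of Bloch--Kato. Specifically, by \ref{thm:pdefring}, the scheme $\Spec R_{\rhobar}^\square(\tau,\mathbf{v})[1/p]$ is equidimensional of dimension $d^2 + \dim_E \ad(D)/\ad(D)^+$. Hence, writing $k$ for the residue field of $x$, formal smoothness at $x$ is equivalent to the equality
\[ \dim_k Z_g^1(K,\ad(\rho_x)) = d^2 + \dim_E \ad(D)/\ad(D)^+, \]
by \ref{thm:localBK}.

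First I would compare $Z_g^1$ with $H_g^1$. Since coboundaries are killed under restriction to $B_{\dR}$, the natural short exact sequence $0 \to \ad(\rho_x)/\ad(\rho_x)^{G_K} \to Z_g^1(K,\ad(\rho_x)) \to H_g^1(K,\ad(\rho_x)) \to 0$ gives
\[ \dim_k Z_g^1(K,\ad(\rho_x)) = d^2 - \dim_k H^0(K,\ad(\rho_x)) + \dim_k H_g^1(K,\ad(\rho_x)). \]

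Next I would apply Bloch--Kato's local dimension formula \cite{BlochKato}*{\S3} to the de Rham representation $\ad(\rho_x)$: using the self-duality $\ad(\rho_x)^*(1) \cong \ad(\rho_x)(1)$ provided by the trace pairing, one has
\[ \dim_k H_g^1(K,\ad(\rho_x)) - \dim_k H^0(K,\ad(\rho_x)) = \dim_k t_{\ad(\rho_x)}(K) + \dim_k D_{\cris}(\ad(\rho_x)(1))^{\varphi=1}, \]
where $t_{\ad(\rho_x)}(K) = D_{\dR}(\ad(\rho_x))/\Fil^0$. By \ref{thm:adlem}, $D_{\dR}(\ad(\rho_x)) \cong \ad(D_{\dR}(\rho_x))$ as filtered $K\otimes_{\Q_p}k$-modules, and since $\rho_x$ has $p$-adic Hodge type $\mathbf{v}$, the dimension $\dim_k \ad(D_{\dR}(\rho_x))/\ad(D_{\dR}(\rho_x))^+$ depends only on the multiplicities of the graded pieces and so equals $\dim_E \ad(D)/\ad(D)^+$.

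Combining these identities yields
\[ \dim_k Z_g^1(K,\ad(\rho_x)) = d^2 + \dim_E \ad(D)/\ad(D)^+ + \dim_k D_{\cris}(\ad(\rho_x)(1))^{\varphi=1}, \]
so $x$ is formally smooth if and only if $D_{\cris}(\ad(\rho_x)(1))^{\varphi=1} = 0$, which by \ref{thm:lpgeneric}\ref{lpgeneric:crys} is exactly the genericity of $\WD(\rho_x)$. The only real subtlety is ensuring the correct form of the Bloch--Kato dimension formula in the potentially semistable (not merely crystalline) setting; I expect the main obstacle to be making this invocation precise and verifying the Hodge-type-independence claim about $\ad(D)/\ad(D)^+$, though both reduce to standard facts.
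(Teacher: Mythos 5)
Your proposal is correct and follows essentially the same route as the paper's proof: Kisin's dimension formula (\ref{thm:pdefring}), the tangent space identification of \ref{thm:localBK}, the filtered isomorphism of \ref{thm:adlem}, the Bloch--Kato local dimension formulas from \cite{BlochKato}*{\S3} via the trace-pairing self-duality, and finally part~\ref{lpgeneric:crys} of \ref{thm:lpgeneric}. The only cosmetic difference is that you package \cite{BlochKato}*{Proposition~3.8} and \cite{BlochKato}*{Corollary~3.8.4} into a single dimension identity, whereas the paper spells out the intermediate terms $H_e^1$ and $H_f^1$; the worries you flag are unproblematic, since the Bloch--Kato formulas apply to any \mbox{de Rham} representation and the Hodge-type hypothesis gives the graded-piece comparison directly.
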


\begin{proof}
Let $k$ denote the residue field of $x$. By \ref{thm:pdefring}, $R_{\rhobar}^\square(\tau,\mathbf{v})_x^\wedge$ has dimension $d^2 + \dim_E \ad(D)/\ad(D)^+.$
Since $\rho_x$ has $p$-adic Hodge type $\mathbf{v}$, this is equal to $d^2 + \dim_k \ad(D_{\dR}(\rho_x))/\ad(D_{\dR}(\rho_x))^+$, which equals  $d^2 + \dim_k D_{\dR}(\ad(\rho_x))/D_{\dR}(\ad(\rho_x))^+$ by \ref{thm:adlem}.

We now analyze the dimension of the tangent space of $R_{\rhobar}^\square(\tau,\mathbf{v})_x^\wedge$. 
By \ref{thm:localBK}, the tangent space of $R_{\rhobar}^\square(\tau,\mathbf{v})_x^\wedge$ has dimension
	\[ \dim_k Z_g^1(K,\ad(\rho_x)) = d^2 + \dim_k H_g^1(K,\ad(\rho_x)) - \dim_k H^0(K,\ad(\rho_x)). \]
So, $R_{\rhobar}^\square(\tau,\mathbf{v})_x^\wedge$ is smooth if and only if
	\[ \dim_k H_g^1(K,\ad(\rho_x)) - \dim_k H^0(K,\ad(\rho_x)) = \dim_k D_{\dR}(\ad(\rho_x))/D_{\dR}(\ad(\rho_x))^+,
	\]
equivalently,
	\begin{equation}\label{eqn:dRtangent}
	\dim_{\Q_p} H_g^1(K,\ad(\rho_x)) - \dim_{\Q_p} H^0(K,\ad(\rho_x)) = \dim_{\Q_p} D_{\dR}(\ad(\rho_x))/D_{\dR}(\ad(\rho_x))^+.
	\end{equation}
Before proceeding, we introduce some notation. 
If $W$ is a finite dimensional $\Q_p$-vector space with a continuous $\Q_p$-linear $G_K$-action, define the $\Q_p$-vector spaces as in \cite{BlochKato}*{\S3}:
	\begin{align*}
	H_e^1(K, W) &:= \ker(H^1(K, W) \rightarrow H^1(K, B_{\cris}^{\varphi = 1} \otimes_{\Q_p} W)), \\
	H_f^1(K, W) &:= \ker(H^1(K, W) \rightarrow H^1(K, B_{\cris} \otimes_{\Q_p} W)).
	\end{align*}
The pairing $(X,Y) \mapsto \tr_{k/\Q_p}(\tr(XY))$ is perfect on $\ad(\rho_x)$, so induces an isomorphism 	
	\[\ad(\rho_x)(1) \cong \Hom_{\Q_p}(\ad(\rho_x),\Q_p(1)). \] 
Then, by \cite{BlochKato}*{Proposition~3.8},
	\begin{align*} 
	& \dim_{\Q_p} H_g^1(K,\ad(\rho_x)) - \dim_{\Q_p} H^0(K,\ad(\rho_x)) \\
	\quad {}	
	& = 
	\dim_{\Q_p} H^1(K, \ad(\rho_x)) - \dim_{\Q_p} H_e^1(K,\ad(\rho_x)(1)) - \dim_{\Q_p} H^0(K,\ad(\rho_x)) \\ 
	\quad {} 
	& = \dim_{\Q_p} H_f^1(K,\ad(\rho_x)) + \dim_{\Q_p} H_f^1(K, \ad(\rho_x)(1)) - \dim_{\Q_p} H_e^1(K,\ad(\rho_x)(1)) - \dim_{\Q_p} H^0(K,\ad(\rho_x)).
	\end{align*}
Using \cite{BlochKato}*{Corollary~3.8.4}, this last expression equals
	\[ \dim_{\Q_p} D_{\dR}(\ad(\rho_x))/D_{\dR}(\ad(\rho_x))^+ + \dim_{\Q_p} D_{\cris}(\ad(\rho_x)(1))^{\varphi = 1}. \]
Plugging this into \eqref{eqn:dRtangent}, we see that $R_{\rhobar}^\square(\tau,\mathbf{v})_x^\wedge$ is formally smooth if and only if $D_{\cris}(\ad(\rho_x)(1))^{\varphi = 1} = 0$. This happens if and only if $\WD(\rho_x)$ is generic by part \ref{lpgeneric:crys} of \ref{thm:lpgeneric}.
\end{proof}

More thorough investigations of the smooth and singular loci in the case $d=2$ and the case $d=3$ and $K_0 = \Q_p$ are carried out in \cite{KisinFM}*{(A.1)} and \cite{BellovinSmooth}*{\S7}. 
In particular, when $d=2$ Kisin shows in \cite{KisinFM}*{(A.1)} that $\Spec R_{\rhobar}^\square(\tau,\mathbf{v})[1/p]$ is reduced, and is either smooth or is the union of $2$ smooth closed subspaces. 
When $d = 3$ and $K_0 = \Q_p$, Bellovin shows in \cite{BellovinSmooth}*{\S7.3} that $\Spec R_{\rhobar}^\square(1,\mathbf{v})[1/p]$ is the union of $3$ closed subspaces, two of which are smooth, and one of which is singular.

In practice it is often important to know that given a representation $\rho$ of $G_K$, the restriction $\rho|_{G_L}$ defines a smooth point for any finite extension $L/K$. 
For example, $\rho = \epsilon\oplus \chi$ with $\chi$ a nontrivial finite order character defines a smooth point on the corresponding potentially semistable deformation ring, but the restriction $\rho|_{G_L}$ to any $G_L$ that trivializes $\chi$ will not. 
It is not hard to see that any (mixed) pure Galois representation (i.e. one that satisfies the conclusion of the Weight--Monodromy Conjecture) will define a smooth point after any finite base change. 
A similar sufficient condition was noticed by Calegari \cite{CalegariEven2}*{Lemma~2.6}.
However, as the following example illustrates, the condition that $\WD(\rho|_{G_L})$ is generic for any finite extension $L/K$ is strictly weaker than either of these.
%
\begin{eg}\label{eg} 
Choose a cocycle $\kappa$ of $G_{\Q_p}$ valued in $\Q_p(1)$ such that the cohomology class of $\kappa$ does not lie in $H_f^1(\Q_p,\Q_p(1))$. 
Then the representation
	\[ \rho = \begin{pmatrix} 1 & & \\ & 1 & \kappa  \\ & & \epsilon^{-1} \end{pmatrix} \]
is semistable noncrystalline, and if $L/\Q_p$ is any finite extension, 
the Weil--Deligne representation $\WD(\rho|_{G_L}) = (r,N)$ is given by
	\[ r(\Frob_L) = \begin{pmatrix} 1 & &  \\ & 1 &  \\ & & p^f  \end{pmatrix} \quad \text{and} \quad
	N = \begin{pmatrix}  & 0 &  \\ &  & 1  \\ &  &   \end{pmatrix}, \]
where $f$ denotes the residue degree of $L$. 
A straightforward check shows that $\WD(\rho|_{G_L})$ is generic, so letting $\mathbf{v}$ denote the $p$-adic Hodge type of $\rho$, the restriction $\rho|_{G_L}$ defines a smooth point on $\Spec R_{\rhobar|_{G_L}}^\square(1,\mathbf{v}_L)[1/p]$ for any finite extension $L/\Q_p$ (where $\mathbf{v}_L := L\otimes_K \mathbf{v}$). 
Some related and more detailed computations are carried out in \cite{BellovinSmooth}*{\S7.3}.
\end{eg}

\begin{rmk}\label{rmk:gfsame}
Let $\rho : G_K \rightarrow \GL_d(E)$ a potentially semistable representation. 
We saw in the proof of \ref{thm:smoothlp} that \cite{BlochKato}*{Proposition~3.8} and \cite{BlochKato}*{Corollary~3.8.4} imply
	\[ \dim_{\Q_p} H_g^1(K,\ad(\rho))  = \dim_{\Q_p} H_f^1(K,\ad(\rho)) + \dim_{\Q_p} D_{\cris}(\ad(\rho)(1))^{\varphi = 1}. \]
So part~\ref{lpgeneric:crys} of \ref{thm:lpgeneric} shows $H_f^1(K,\ad(\rho)) = H_g^1(K,\ad(\rho))$ if and only if $\WD(\rho)$ is generic. 

We can generalize one direction of this slightly. 
Let $W$ be a representation of $G_K$ on a $d^2$-dimensional $E$-vector space such that it's restriction to $G_L$, for some $L/K$ finite, is isomorphic to $\ad(\rho)$ with $\rho : G_L \rightarrow \GL_d(E)$ a potentially semistable representation. 
Then $H_f^1(K,W) = H_g^1(K,W)$ if $\WD(\rho)$ is generic. 
Indeed, the commutative diagram
	\[ \begin{tikzcd} 
	H^1(K,W) \arrow{d} \arrow{r} & H^1(K,B_{\cris}\otimes_{\Q_p}W) \arrow{d} \arrow{r} & H^1(K,B_{\dR}\otimes_{\Q_p} W) \arrow{d} \\
	H^1(L,W)  \arrow{r} & H^1(L,B_{\cris}\otimes_{\Q_p}W)  \arrow{r} & H^1(L,B_{\dR}\otimes_{\Q_p} W)
	\end{tikzcd} \]
has injective vertical arrows by restriction-corestriction. 
It easily follows that $H_f^1(K,W) = H_g^1(K,W)$ if $H_f^1(L,W) = H_g^1(L,W)$. 
We mention this slight generalization because in our global applications we do not wish to restrict ourselves to CM extensions $F/F^+$ such that every $v|p$ in $F^+$ splits in $F$.
\end{rmk}
%
%
%

\subsection{Global Galois deformation rings}\label{sec:global}

Throughout this subsection we assume $p>2$.

We recall the Clozel--Harris--Taylor group scheme $\mathcal{G}_d$, which is the group scheme over $\Z$ defined as the semidirect product
	\[ (\GL_d \times \GL_1) \rtimes \{1,\jmath\} = \calG_d^0 \rtimes \{1,\jmath\}, \]
where $\jmath (g,a) \jmath = (a \transp g^{-1},a)$, and the homomorphism $\nu : \mathcal{G}_d \rightarrow \GL_1$ given by $\nu(g,a) = a$ and $\nu(\jmath) = -1$. 
We let $\frakgl_d = \Lie \GL_d \subset \Lie \mathcal{G}_d$, and let $\ad$ denote the adjoint action of $\calG_d$ on $\frakgl_d$, i.e 
	\[ \ad(g,a)(x) = gxg^{-1} \quad \text{and} \quad \ad(\jmath)(x) = -{}^t x.\]
If $\Gamma$ is a group, $A$ is a commutative ring and 
	\[ r : \Gamma \lra \calG_d(A) \]
is a homomorphism, we write $\ad(r)$ for $\frakgl_d(A)$ with the adjoint action $\ad\circ r$ of $\Gamma$.

The following is (part of) \cite{CHT}*{Lemma~2.1.1}.

\begin{lem}\label{thm:Gdhoms}
Let $\Gamma$ be a topological group with an open subgroup $\Delta$ of index $2$. Fix some $\gamma_0 \in \Gamma \smallsetminus \Delta$. Let $A$ be a topological ring. There is a natural bijection between the following two sets.
\begin{enumerate}
	\item Continuous homomorphisms $r : \Gamma \rightarrow \calG_d(A)$ inducing an isomorphism $\Gamma/\Delta \xrightarrow{\sim} \calG_d(A)/\calG_d^0(A)$.
	\item Triples $(\rho,\mu,\langle\cdot,\cdot\rangle)$, where $\rho : \Delta \rightarrow \GL_d(A)$ and $\mu : \Gamma \rightarrow A^\times$ are continuous homomorphisms and $\langle \cdot, \cdot \rangle$ is a perfect $A$-linear pairing on $A^d$ satisfying
		\[ \langle \rho(\delta) a, \rho(\gamma_0\delta\gamma_0^{-1}) b \rangle = \mu(\delta)\langle a, b \rangle \quad \text{and}
		\quad \langle a, \rho(\gamma_0^2) b \rangle = - \mu(\gamma_0) \langle b, a \rangle \]
	for all $a,b\in A^d$ and $\delta \in \Delta$.
\end{enumerate}
Under this bijection, $\mu(\gamma) = (\nu\circ r)(\gamma)$ for all $\gamma \in \Gamma$, and $\langle a, b \rangle = {}^t a P^{-1} b$ for $r(\gamma_0) = (P,-\mu(\gamma_0))\jmath$.
\end{lem}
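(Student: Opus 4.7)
The plan is to construct the bijection explicitly in both directions and then verify that all compatibilities are equivalent to the two pairing identities. Given $r$ in (1), note that $r|_\Delta$ takes values in $\calG_d^0(A) = \GL_d(A) \times \GL_1(A)$, so I can write $r(\delta) = (\rho(\delta), \mu(\delta))$ for $\delta \in \Delta$, where $\rho : \Delta \to \GL_d(A)$ and $\mu|_\Delta : \Delta \to A^\times$ are continuous homomorphisms. By hypothesis $r(\gamma_0) \notin \calG_d^0(A)$, so $r(\gamma_0) = (P,a)\jmath$ for some $P \in \GL_d(A)$ and $a \in A^\times$. Since $\nu((P,a)\jmath) = -a$, the value $\mu(\gamma_0) = (\nu \circ r)(\gamma_0)$ forces $a = -\mu(\gamma_0)$, and this also allows me to extend $\mu$ consistently to all of $\Gamma$. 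Set $\langle x,y \rangle = \transp x \, P^{-1} y$, which is perfect because $P \in \GL_d(A)$.

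Next I would verify that the pairing identities are equivalent to the two genuinely nontrivial relations satisfied by $r$: namely $r(\gamma_0)r(\delta)r(\gamma_0)^{-1} = r(\gamma_0 \delta \gamma_0^{-1})$ and $r(\gamma_0)^2 = r(\gamma_0^2)$. A direct calculation using the semidirect product rule $\jmath(g,a)\jmath = (a\transp g^{-1}, a)$ gives
\[
r(\gamma_0)(g,a)r(\gamma_0)^{-1} = \bigl(a\, P \transp g^{-1} P^{-1},\; a\bigr)
\quad\text{and}\quad
r(\gamma_0)^2 = \bigl(-\mu(\gamma_0)\, P \transp P^{-1},\; \mu(\gamma_0)^2\bigr).
\]
Substituting $(g,a) = (\rho(\delta),\mu(\delta))$ into the first and equating with $r(\gamma_0 \delta \gamma_0^{-1}) = (\rho(\gamma_0 \delta \gamma_0^{-1}), \mu(\delta))$ gives the identity $\rho(\gamma_0 \delta \gamma_0^{-1}) = \mu(\delta)\, P \transp \rho(\delta)^{-1} P^{-1}$, which is equivalent to the first pairing relation $\langle \rho(\delta) x, \rho(\gamma_0\delta\gamma_0^{-1}) y \rangle = \mu(\delta)\langle x, y \rangle$ by a short transpose manipulation. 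Similarly, comparing $r(\gamma_0)^2$ with $r(\gamma_0^2) = (\rho(\gamma_0^2), \mu(\gamma_0)^2)$ yields $\rho(\gamma_0^2) = -\mu(\gamma_0) P \transp P^{-1}$, and pairing both sides against $x$ and $b$ reproduces $\langle x, \rho(\gamma_0^2) y \rangle = -\mu(\gamma_0)\langle y, x\rangle$.

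For the inverse construction, starting from a triple $(\rho,\mu,\langle\cdot,\cdot\rangle)$, I define $r(\delta) = (\rho(\delta),\mu(\delta))$ for $\delta \in \Delta$ and $r(\gamma_0) = (P,-\mu(\gamma_0))\jmath$, where $P$ is the unique invertible matrix with $\langle x,y\rangle = \transp x\, P^{-1} y$, and extend by $r(\gamma_0 \delta) = r(\gamma_0)r(\delta)$. The only thing to verify is that this is a homomorphism, which reduces to the two relations above; by the previous paragraph, these are exactly the pairing identities we assumed. Continuity of $r$ follows from that of $\rho$ and $\mu$, since $\Delta$ is open in $\Gamma$. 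The two constructions are visibly inverse, so this establishes the bijection.

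I expect the only subtle point is bookkeeping the signs and transposes consistently — in particular tracking that the element $a$ appearing in $r(\gamma_0) = (P,a)\jmath$ is $-\mu(\gamma_0)$ rather than $\mu(\gamma_0)$, which is what forces the sign in the second pairing identity. Everything else reduces to the two model computations displayed above.
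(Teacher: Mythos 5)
Your argument is correct and is exactly the standard verification: the paper gives no proof of this lemma, quoting it from \cite{CHT}*{Lemma~2.1.1}, and the proof there is the same explicit computation — reduce the homomorphism property of the extension to the two relations $r(\gamma_0)r(\delta)r(\gamma_0)^{-1}=r(\gamma_0\delta\gamma_0^{-1})$ and $r(\gamma_0)^2=r(\gamma_0^2)$, and translate them via $\jmath(g,a)\jmath=(a\transp g^{-1},a)$ into the two pairing identities, with the sign coming from $\nu(\jmath)=-1$ forcing $r(\gamma_0)=(P,-\mu(\gamma_0))\jmath$. Only cosmetic quibbles: the letter $a$ is overloaded (as the second coordinate of a generic element $(g,a)$ and as the second coordinate of $r(\gamma_0)$), and in ``pairing both sides against $x$ and $b$'' you mean $x$ and $y$.
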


If $\Gamma$ is a group, $A$ is a commutative ring, $r : \Gamma \rightarrow \calG_d(A)$ is a homomorphism, and $B$ is an $A$-algebra, we will write $r \otimes_A B$ for the composite of $r$ with the map $\calG_d(A) \rightarrow \calG_d(B)$. 
If $[r]$ is a $1+\M_d(\frakm_A)$-conjugacy class of such homomorphisms, we will write $[r]\otimes_A B$ for the $1+\M_d(\frakm_B)$-conjugacy class $[r\otimes_A B]$. 

If $\Gamma$ is a group, $A$ is a commutative ring, $r : \Gamma \rightarrow \calG_d(A)$ is a homomorphism, and $\Delta$ is a subgroup of $\Gamma$ such that $r(\Delta) \subseteq \calG_d^0(A)$, we will write $r\rest_\Delta$ for the composite of the restriction of $r$ to $\Delta$ with the projection $\calG_d^0(A) \rightarrow \GL_d(A)$. 
In particular, we view $A^d$ as an $A[\Delta]$-module via $r\rest_{\Delta}$.

We recall \cite{CHT}*{Definition~2.1.6}:

\begin{defn}\label{def:schur}
Let $\Gamma$ be a group with index two subgroup $\Delta$. Fix $\gamma_0 \in \Gamma\smallsetminus \Delta$. Let $k$ be a field and let $r : \Gamma \rightarrow \calG_d(k)$ be a homomorphism with $\Delta = r^{-1}(\calG_d^0(k))$. We say that $r$ is \emph{Schur} if all $\Delta$-irreducible subquotients of $k^n$ are absolutely irreducible and for all $\Delta$-invariant subspaces $k^n \supset W_1 \supset W_2$ such that $k^n/W_1$ and $W_2$ are irreducible, we have $(k^n/W_1)^{\gamma_0} \not\cong W_2^\vee \otimes(\nu\circ r)$.
\end{defn} 

\noindent Note that if $r\rest_\Delta$ is absolutely irreducible, then $r$ is Schur. 

\subsubsection{}\label{sec:globalH}
Before proceeding with deformation theory, we prove some results on the cohomology of the adjoint representation valued in a finite extension of $\Q_p$. 
Let $F$ be a CM field with maximal totally real subfield $F^+$. 
Let $S$ be a finite set of finite places of $F^+$ containing all those above $p$. 
Let $k$ be a finite extension of $\Q_p$, let
	\[ r : \Gal(F(S)/F^+) \lra \calG_d(k) \]
be a continuous homomorphism inducing an isomorphism $\Gal(F/F^+) \xrightarrow{\sim} \calG_d(k)/\calG_d^0(k)$, and let $\mu = \nu\circ r$. 
For each $v|\infty$ in $F$, let $c_v \in G_{F^+}$ be a choice of complex conjugation. 
Recall we have assumed $p>2$. 

\begin{lem}\label{thm:EPchar}
Let the notation and assumptions be as in \ref{sec:globalH} above. 
Then
	\[ \sum_{i=0}^2 (-1)^i\dim_k H^i(F(S)/F^+,\ad(r)) = - d^2[F^+:\Q] + \sum_{v|\infty} \frac{d(d+\mu(c_v))}{2}.\]
\end{lem}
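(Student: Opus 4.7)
The plan is to combine Tate's global Euler--Poincaré characteristic formula with an explicit computation of the $c_v$-fixed subspace of $\ad(r)$. After possibly enlarging $S$ to contain the (finitely many) places of $F^+$ ramifying in $F/F^+$, the module $\ad(r)$ is a continuous representation of a suitable Galois group of a maximal extension of $F^+$ unramified outside $S$ and $\infty$. Reducing to finite $\calO$-coefficients via a $G_{F^+}$-stable lattice and passing to the limit, the standard formula gives
\[
\sum_{i=0}^{2}(-1)^i \dim_k H^i(F(S)/F^+,\ad(r)) = -[F^+:\Q]\cdot d^2 + \sum_{v\mid\infty}\dim_k \ad(r)^{c_v},
\]
where we have used that $F^+$ is totally real, so every archimedean place is real with local Galois group $\{1,c_v\}$.

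It therefore suffices to show $\dim_k \ad(r)^{c_v} = d(d+\mu(c_v))/2$ for each $v\mid\infty$. Applying Lemma~\ref{thm:Gdhoms} with $(\Gamma,\Delta,\gamma_0) = (\Gal(F(S)/F^+),\Gal(F(S)/F),c_v)$, we may write $r(c_v) = (P_v,-\mu(c_v))\jmath$ for some $P_v\in\GL_d(k)$. The relation $c_v^2=1$ corresponds to the second symmetry condition in that lemma (equivalently, one computes $r(c_v)^2$ directly from $\jmath(g,a)\jmath = (a\,{}^t g^{-1},a)$) and forces
\[
P_v = -\mu(c_v)\,{}^t P_v,
\]
so $P_v$ is symmetric precisely when $\mu(c_v) = -1$. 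From the definitions $\ad(g,a)(X) = gXg^{-1}$ and $\ad(\jmath)(X) = -{}^t X$, the element $c_v$ acts on $\ad(r) = \frakgl_d(k)$ by $X \mapsto -P_v\,{}^t X\,P_v^{-1}$.

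The $k$-linear automorphism $X \mapsto Y := XP_v$ of $\frakgl_d(k)$ converts the fixed-point equation into the cleaner identity ${}^t Y = \mu(c_v)\,Y$, which identifies $\ad(r)^{c_v}$ with the space of $\mu(c_v)$-symmetric $d\times d$ matrices over $k$ (symmetric if $\mu(c_v)=1$, antisymmetric if $\mu(c_v)=-1$), of dimension $d(d+\mu(c_v))/2$ in either case. Substituting back yields the stated identity. The only bookkeeping that requires care is extracting the (anti)symmetry of $P_v$ from $c_v^2 = 1$ via Lemma~\ref{thm:Gdhoms} and tracking signs through the change of variable; the Euler--Poincaré input is entirely standard.
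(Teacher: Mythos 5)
Your computation of the archimedean term is correct and is exactly the content of the reference the paper uses for it: writing $r(c_v)=(P_v,-\mu(c_v))\jmath$, extracting $\transp P_v=-\mu(c_v)P_v$ from $r(c_v)^2=1$, and the change of variable $Y=XP_v$ do identify $\ad(r)^{c_v}=H^0(F_v^+,\ad(r))$ with the $\mu(c_v)$-symmetric matrices, of dimension $d(d+\mu(c_v))/2$ (this is \cite{CHT}*{Lemma~2.1.3}).

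The gap is in the global input. The group $\Gal(F(S)/F^+)$ is \emph{not} the Galois group of the maximal extension of $F^+$ unramified outside a finite set of places: $F(S)$ is cut out by ramification conditions over $F$, and $F/F^+$ may be ramified at places not in $S$ (in the intended applications the places of $S$ are even required to be split in $F$, so the ramified places of $F/F^+$ are never in $S$). Your fix of ``enlarging $S$'' replaces $\Gal(F(S)/F^+)$ by the genuinely larger group $\Gal(F(S')/F^+)=\Gal(F^+(S')/F^+)$, and this changes the cohomology groups appearing in the statement: $H^1$ and $H^2$ individually grow when extra ramification is allowed, so Tate's formula for the enlarged set does not yield the formula for the original $S$ without an argument comparing the two Euler characteristics --- and that comparison is essentially equivalent to what is to be proved. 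One can reduce to a standard group by using $p>2$ and Hochschild--Serre to write $H^i(F(S)/F^+,\ad(r))=H^i(F(S)/F,\ad(r))^{\Gal(F/F^+)}$, where $\Gal(F(S)/F)$ \emph{is} a standard $G_{F,S_F}$; but then the plain Tate formula over the CM field $F$ only computes the sum of the Euler characteristics of $\ad(r)$ and of its twist by $\delta_{F/F^+}$, and one needs a $\Gal(F/F^+)$-equivariant refinement to separate them. This is exactly what the paper's proof supplies: after conjugating $r$ into $\calG_d(\calO_k)$ (\cite{CHT}*{Lemma~2.1.5}) it reduces to finite coefficients and invokes \cite{CHT}*{Lemma~2.3.3}, an Euler characteristic formula proved precisely for $\Gal(F(S)/F^+)$-modules, together with the limit argument of \cite{KisinOverConvFM}*{Lemma~9.7} to pass back to $k$-coefficients. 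Your proof needs either that lemma (or a proof of it) or a direct argument that passing from $\Gal(F(S)/F^+)$ to $\Gal(F(S')/F^+)$ leaves the alternating sum unchanged; as written, this step is missing.
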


\begin{proof}
An easy computation (see \cite{CHT}*{Lemma~2.1.3}) shows $\dim_k H^0(F_v^+,\ad(r)) = \frac{d(d+\mu(c_v))}{2}$ for each $v|\infty$. 
Using \cite{CHT}*{Lemma~2.1.5}, we may assume $r$ takes values in $\calG_d(\calO_k)$, where $\calO_k$ is the ring of integers of $k$. 
The lemma now follows from \cite{CHT}*{Lemma~2.3.3} by an argument as in \cite{KisinOverConvFM}*{Lemma~9.7}.
\end{proof}

\begin{lem}\label{thm:cohom}
Let the assumptions and notation be as in \ref{sec:globalH} above. 
Assume further that $r\rest_{G_w}$ is \mbox{de Rham} with regular $p$-adic Hodge type for every $w|p$ in $F$, that $\mu(c_v) = -1$ for every $v|\infty$, and that $\ad(r)^{G_{F^+}} = 0$. 
If $H_g^1(F(S)/F^+,\ad(r)) = 0$, then the following hold. 
\begin{enumerate}
\item\label{cohom:H1dim} $\dim_k H^1(F(S)/F^+,\ad(r)) = \frac{d(d+1)}{2}[F^+:\Q]$.
\item\label{cohom:H2} $H^2(F(S)/F^+,\ad(r)) = 0$.
\item\label{cohom:H1iso} The natural map
	\[ H^1(F(S)/F^+,\ad(r)) \lra \prod_{v|p} H^1(F_v^+,\ad(r))/H_g^1(F_v^+,\ad(r)) \]
is an isomorphism.
\end{enumerate}
\end{lem}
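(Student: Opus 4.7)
The plan is to sandwich $\dim_k H^1(F(S)/F^+,\ad(r))$ between a lower bound coming from the global Euler--Poincar\'e characteristic and an upper bound coming from the localization map in \ref{cohom:H1iso}, and to show that both bounds equal $\tfrac{d(d+1)}{2}[F^+:\Q]$. Once this is achieved, \ref{cohom:H1dim} is the equality of the two bounds, \ref{cohom:H2} follows from the Euler characteristic identity, and \ref{cohom:H1iso} follows because every intermediate inequality must be an equality.

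For the lower bound, I would invoke \ref{thm:EPchar} with $\mu(c_v)=-1$ at each of the $[F^+:\Q]$ infinite places, together with the hypothesis $\ad(r)^{G_{F^+}}=0$, to obtain $\dim_k H^1(F(S)/F^+,\ad(r)) - \dim_k H^2(F(S)/F^+,\ad(r)) = \tfrac{d(d+1)}{2}[F^+:\Q]$.

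For the upper bound, the kernel of $\phi$ in \ref{cohom:H1iso} is tautologically $H_g^1(F(S)/F^+,\ad(r))=0$ by the very definitions, so $\phi$ is injective. To bound its target, write $V_v := \ad(r)|_{G_{F_v^+}}$. The trace pairing on $\mathfrak{gl}_d$ is preserved by the adjoint $\calG_d$-action (a direct check using $\ad(\jmath)X = -\transp X$ shows $\tr((\ad(\jmath)X)(\ad(\jmath)Y)) = \tr(XY)$), so $V_v \cong V_v^*$ as $G_{F_v^+}$-modules; local Tate duality and the local Euler characteristic then yield
$$\dim_k H^1(F_v^+,V_v) = d^2[F_v^+:\Q_p] + \dim_k H^0(F_v^+,V_v) + \dim_k H^0(F_v^+,V_v(1)).$$
The Bloch--Kato computation already carried out in the proof of \ref{thm:smoothlp} applies verbatim to the de Rham self-dual representation $V_v$ to give
$$\dim_k H_g^1(F_v^+,V_v) - \dim_k H^0(F_v^+,V_v) = \dim_k D_{\dR}(V_v)/D_{\dR}(V_v)^+ + \dim_k D_{\cris}(V_v(1))^{\varphi=1}.$$
The Hodge--Tate weights of $V_v$, computed via any extension of an embedding $\sigma\colon F_v^+ \hookrightarrow E$ to an embedding $\widetilde\sigma\colon F_w \hookrightarrow E$ (for $w|v$ in $F$) using base-change compatibility of $D_{\dR}$, are the pairwise differences of those of $r|_{G_w}$; regularity of the latter implies exactly $d(d-1)/2$ such differences are negative per embedding, whence $\dim_k D_{\dR}(V_v)/D_{\dR}(V_v)^+ = \tfrac{d(d-1)}{2}[F_v^+:\Q_p]$. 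Subtracting the two displays and invoking the elementary inclusion $H^0(F_v^+,V_v(1)) \hookrightarrow D_{\cris}(V_v(1))^{\varphi=1}$ gives $\dim_k H^1(F_v^+,V_v)/H_g^1(F_v^+,V_v) \leq \tfrac{d(d+1)}{2}[F_v^+:\Q_p]$; summing over $v\mid p$ produces the desired upper bound.

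Combining the two bounds forces $\dim_k H^1 = \tfrac{d(d+1)}{2}[F^+:\Q]$, $\dim_k H^2 = 0$, and surjectivity of $\phi$; these are exactly \ref{cohom:H1dim}, \ref{cohom:H2}, and \ref{cohom:H1iso}. The most delicate point is the local dimension bookkeeping, in particular matching the negative Hodge--Tate weight count at inert places $v\mid p$ (handled by the $F_v^+$ vs.\ $F_w$ base-change identity above) and confirming that the trace-pairing self-duality $V_v \cong V_v^*$ is genuinely $G_{F_v^+}$-equivariant, not just $G_{F_w}$-equivariant; everything else is an application of results already established in the paper.
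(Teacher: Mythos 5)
Your proposal is correct, and its global skeleton is exactly the paper's: the Euler--Poincar\'e formula of \ref{thm:EPchar} with $\mu(c_v)=-1$ and $\ad(r)^{G_{F^+}}=0$ gives $\dim H^1-\dim H^2=\frac{d(d+1)}{2}[F^+:\Q]$, vanishing of the global $H^1_g$ gives injectivity of the localization map, and everything is reduced to the local bound $\dim_k H^1(F_v^+,\ad(r))/H^1_g(F_v^+,\ad(r))\le\frac{d(d+1)}{2}[F_v^+:\Q_p]$, after which the sandwich yields all three statements. Where you diverge is in how that local bound is proved. The paper (following Kisin) observes that the quotient injects into the image of $H^1(F_v^+,\ad(r))\to H^1(F_v^+,B_{\dR}\otimes_{\Q_p}\ad(r))$, identifies the target with the filtered module $D_{\dR}(\ad(r))$ via Tate's computation $H^1(F_v^+,B_{\dR})\cong F_v^+$, notes the image lies in $\Fil^0$, and then computes $\dim_k\Fil^0 D_{\dR}(\ad(r))=\frac{d(d+1)}{2}[F_v^+:\Q_p]$ from regularity, treating split and non-split $v$ separately by Galois descent from $F_w$. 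You instead apply the local Euler characteristic, local Tate duality (via the $\calG_d$-invariant trace pairing, which you correctly check is preserved by $\ad(\jmath)$), and the Bloch--Kato dimension formula for $H^1_g$ already used in the proof of \ref{thm:smoothlp}, finishing with the inclusion $H^0(F_v^+,\ad(r)(1))\subseteq D_{\cris}(\ad(r)(1))^{\varphi=1}$; your treatment of non-split $v$ via restriction of embeddings to $F_w$ is the same descent fact the paper uses. Both routes hinge on the identical regularity count $\dim_k D_{\dR}(\ad(r))/\Fil^0=\frac{d(d-1)}{2}[F_v^+:\Q_p]$, and your extra inputs (self-duality of $\ad(r)$ as a $G_{F_v^+}$-module, and that $\ad(r)|_{G_{F_v^+}}$ is de Rham, which is immediate since de Rham-ness is insensitive to the index-two restriction to $G_{F_w}$) are all available. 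What each approach buys: yours reuses the Bloch--Kato machinery already set up for \ref{thm:smoothlp} and even exhibits the exact defect of the local inequality as $\dim D_{\cris}(\ad(r)(1))^{\varphi=1}-h^0(\ad(r)(1))$, while the paper's argument is more self-contained at this step, avoiding duality and Euler characteristics at $p$ entirely by passing directly to $B_{\dR}$-cohomology.
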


\begin{proof}
The argument is exactly as in the proof of \cite{KisinGeoDefs}*{Theorem~8.2}. 
We give the details. 

Using our assumption that $\mu(c_v) = -1$ for every $v|\infty$, and that $\ad(r)^{G_{F^+}} = 0$, \ref{thm:EPchar} implies 
	\[ \dim_k H^1(F(S)/F^+,\ad(r)) - \dim_k H^2(F(S)/F^+,\ad(r)) =  \frac{d(d+1)}{2}[F^+:\Q], \]
so parts~\ref{cohom:H1dim} and \ref{cohom:H2} are equivalent. 
Using the assumption $H_g^1(F(S)/F^+,\ad(r)) = 0$, there is an injection
	\[ H^1(F(S)/F^+,\ad(r)) \lra \prod_{v|p} H^1(F_v^+,\ad(r))/H_g^1(F_v^+,\ad(r)), \]
and all three parts of the lemma will follow from showing 
	\[ \dim_k H^1(F_v^+,\ad(r))/H_g^1(F_v^+,\ad(r)) \le \frac{d(d+1)}{2}[F_v^+:\Q_p] \]
for each $v|p$ in $F^+$. 
Equivalently, it suffices to show that for each $v|p$ in $F^+$, the image of 
	\[ H^1(F_v^+,\ad(r)) \lra H^1(F_v^+,B_{\dR} \otimes_{\Q_p} \ad(r)) \]
has $k$-dimension $\le \frac{d(d+1)}{2}[F_v^+:\Q_p]$. 
The $k$-vector space $H^1(F_v^+,B_{\dR}\otimes_{\Q_p} \ad(r))$ has a filtration induced by the filtration on $B_{\dR}$,  
and the image of $H^1(F_v^+,\ad(r))$ is contained in the $\Fil^0$. 
It thus suffices to show that for each $v|p$ in $F^+$, 
	\[ \dim_k \Fil^0 H^1(F_v^+,B_{\dR}\otimes_{\Q_p} \ad(r)) \le \frac{d(d+1)}{2}[F_v^+:\Q_p]. \]
From the filtered $G_v$-equivariant isomorphism $B_{\dR} \otimes_{\Q_p} \ad(r) \cong B_{\dR} \otimes_{F_v^+} D_{\dR}(\ad(r))$, and the fact that $H^1(F_v^+,B_{\dR}) \cong F_v^+$ (see \cite{Tatepdiv}*{\S3}), we have filtered isomorphisms
	\begin{align*} H^1(F_v^+,B_{\dR} \otimes_{\Q_p}\ad(r)) & \cong H^1(F_v^+,B_{\dR} \otimes_{F_v^+} D_{\dR}(\ad(r))) \\
	& \cong H^1(F_v^+, B_{\dR}) \otimes_{F_v^+} D_{\dR}(\ad(r)) \\
	& \cong D_{\dR}(\ad(r)). \end{align*}
So, we are reduced to showing that $\dim_k \Fil^0 D_{\dR}(\ad(r)) = \frac{d(d+1)}{2}[F_v^+:\Q_p]$ for each $v|p$ in $F^+$. 
For any $w|p$ in $F$, since $r\rest_{G_w}$ is \mbox{de Rham}, there is an isomorphism of filtered $F_w \otimes_{\Q_p} k$-modules (see \ref{thm:adlem})
	\[(B_{\dR}\otimes_{\Q_p} \ad(r))^{G_w} \cong \ad((B_{\dR}\otimes_{\Q_p} V_{r\rest_{G_w}})^{G_w}),\]
and since $r\rest_{G_w}$ has regular $p$-adic Hodge type,
	\[ \dim_k \Fil^0 \ad((B_{\dR}\otimes_{\Q_p} V_{r\rest_{G_w}})^{G_w}) = \frac{d(d+1)}{2}[F_w:\Q_p]. \]
If $v$ splits in $F$ as $w w^c$, then the choice of $w$ induces an isomorphism $F_v^+ \cong F_w$, and
	\[ \dim_k \Fil^0 D_{\dR}(\ad(r)) = \frac{d(d+1)}{2}[F_v^+:\Q_p]. \]
If $v$ does not split in $F$, then letting $w$ denote the unique place dividing $v$ in $F$, we have $[F_w:F_v^+] = 2$, and there is a filtered isomorphism
	\[ (B_{\dR} \otimes_{\Q_p} \ad(r))^{G_w} \cong F_w \otimes_{F_v^+} D_{\dR}(\ad(r)), \]
and we have
	\[  \dim_k \Fil^0 D_{\dR}(\ad(r)) = \frac{1}{2} \dim_k \Fil^0(B_{\dR} \otimes_{\Q_p} \ad(r))^{G_w} = \frac{d(d+1)}{2}[F_v^+:\Q_p]. \qedhere \]
\end{proof}

We now recall the $\calG_d$-valued deformation theory of \cite{CHT}.

\begin{defn}\label{def:lift}
Let $k$ be either a finite extension of $\F_p$ or of $\Q_p$. 
Let $\Gamma$ be a topological group and let $\rbar : \Gamma \rightarrow \calG_d(k)$ be a continuous homomorphism. 
Let $A$ be a pro-Artinian local ring with a fixed isomorphism $A/\frakm_A \xrightarrow{\sim} k$. 

A \emph{lift} of $\rbar$ to $A$ is a homomorphism $r \rightarrow \calG_d(A)$ such that $r \otimes_A k = \rbar$ and such that for any Artinian quotient $A \rightarrow A'$, the homomorphism $r\otimes_A A'$ is continuous, where we give $A'$ the discrete topology if $k$ is a finite extension of $\F_p$, and the topology as a finite dimensional $k$-vector space if $k$ is a finite extension of $\Q_p$.
A \emph{deformation} of $\rbar$ to $A$ is a $1+\M_d(\frakm_A)$-conjugacy class of lifts.

For a finite set $T$, a $T$-\emph{framed lift} of $\rbar$ to $A$ is a tuple $(r,\{\alpha_v\}_{v\in T})$ where $r$ is a lift of $\rbar$ to $A$ and $\alpha_v \in \ker(\GL_d(A)\rightarrow \GL_d(k))$. 
We say two $T$-framed lifts $(r,\{\alpha_v\}_{v\in  T})$ and $(r',\{\alpha_v'\}_{v\in T})$ to $A$ are equivalent if there is $g\in \ker(\GL_d(A)\rightarrow\GL_d(k))$ such that $g r g^{-1} = r'$ and $g\alpha_v = \alpha_v'$ for each $v\in T$. 
A $T$-\emph{framed deformation} of $\rbar$ to $A$ is an equivalence class of $T$-framed lifts.
\end{defn}

If $r$ is a lift, we will write $[r]$ for the corresponding deformation. If $(r,\{\alpha_v\})$ is a $T$-framed lift, we will write $[r,\{\alpha_v\}]$ for the corresponding $T$-framed deformation.


We will introduce a slight variation of the global deformation problem of \cite{CHT}*{\S2.3}.



\begin{defn}\label{def:globaldatum}
A \emph{global deformation datum} is a tuple
	\[ \calS = (F/F^+, S, \tildeS, \calO, \rbar, \mu, \{R_w\}_{w\in \tildeS}) \]
where
	\begin{itemize}
	\item $F$ is a CM field with maximal totally real subfield $F^+$;
	\item $S$ is a finite set of finite places of $F^+$; 
	\item $\tildeS$ is a finite set of finite places of $F$ such that every $w\in \tildeS$ is split over some $v\in S$, and $\tildeS$ contains at most one place above any $v\in S$;
	\item $\calO$ is the ring of integers of some finite extension of $\Q_p$ with residue field $\F$;
	\item $\rbar: \Gal(F(S)/F^+) \rightarrow \calG_d(\F)$ is a continuous homomorphism;
	\item $\mu: \Gal(F(S)/F^+) \rightarrow \calO^\times$ is a continuous character with $\mu \bmod {\frakm_{\calO}} = \nu\circ\rbar$;
	\item for each $w\in \tildeS$, $R_w$ is a quotient of $R_w^\square : = R_{\rbar\rest_{G_w}}^\square$ satisfying the following property: 
	if $\rho : G_w \rightarrow \GL_d(A)$ is a lift of $\rbar\rest_{G_w}$ to $A$ and $g\in 
	1+\M_d(\frakm_A)$,
	then the map $R_w^\square \rightarrow A$ induced by $\rho$ factors through $R_w$ if and only if the map $R_w^\square \rightarrow A$ induced by $g\rho g^{-1}$ factors through $R_w$.
	\end{itemize}
\end{defn}

This differs from the definition in \cite{CHT}*{\S2.3} in that our ramification set $S$ may contain places that do not split in $F/F^+$, and $\tildeS$ is not required to contain a place above every $v\in S$. 
When proving modularity of Galois representations, one can use base change and descent to reduce to the case that the ramification set splits in $F/F^+$, and for the proof of \ref{thm:BK}, which implies Theorems~\ref{thm:BKA}, \ref{thm:thmB}, and \ref{thm:thmC} from the introduction, it would also suffice to consider this situation because we may also use base change in its proof. 
But we wish to have the statement of Theorem~\ref{thm:thmC} in the above level of generality for other applications where it is not obvious (at least not to the author) how to apply base change and descent. 
One such application is to the density of automorphic points in deformation rings. 
B\"{o}ckle's strategy \cite{BockleDensity} for proving density of modular points in universal deformation rings is to show that every irreducible component of the universal deformation ring contains a smooth modular point, and then to use the infinite fern of Gouv\^{e}a and Mazur starting at such a smooth point to ``fill out" the irreducible component. 
Chenevier \cite{ChenevierFern} has constructed an infinite fern in the $3$-dimensional conjugate self-dual case, assuming $p$ is totally split in the CM field $F$. 
If one knows a priori that automorphic points always define smooth points on the universal deformation ring, then to prove new cases of the density of automorphic points in $3$-dimensional conjugate self-dual deformation rings, one now just has to show that every irreducible component contains an automorphic point. 
These ideas will be developed further in forthcoming work of the author, and as the results in \cite{ChenevierFern} make no assumption on the splitting behaviour in $F$ of the places in $S \smallsetminus \{v|p\}$, we also wish to make no such assumption.

\begin{defn}\label{def:globaldef} Let $\calS=(F/F^+,S,\tildeS,\calO,\rbar,\mu,\{R_w\}_{w\in \widetilde{S}})$ be a global deformation datum, and let $A$ be a $\CNL_{\calO}$-algebra. We say a lift $r : G_{F^+} \rightarrow \calG_d(A)$ of $\rbar$ to $A$ is \emph{type} $\calS$ if
	\begin{itemize}
	\item $r$ factors through $\Gal(F(S)/F^+)$;
	\item $\nu \circ r = \mu$;
	\item for each $w\in \tildeS$, the $\CNL_{\calO}$-morphism $R_w^\square \rightarrow A$ induced by the lift $r\rest_{G_w}$ of $\rbar\rest_{G_w}$, factors through $R_w$.
	\end{itemize}
We say a deformation of $\rbar$ to $A$ is \emph{type} $\calS$ if one (equivalently any) lift in its deformation class is type $\calS$. 
We let $D_{\calS}$ be the set valued functor on $\CNL_{\calO}$ that takes a $\CNL_{\calO}$-algebra $A$ to the set of deformations of type $\calS$. 
If $D_{\calS}$ is representable, we call the representing object the \emph{universal type} $\calS$ \emph{deformation ring} and denote it by $R_{\calS}$.

For any $T\subseteq \tildeS$, we say a $T$-framed deformation $[r,\{\alpha_w\}]$ of $\rbar$ to $A$ is \emph{type} $\calS$ if $[r]$ is a type $\calS$ deformation of $\rbar$. 
We let $D_{\calS}^{\square_T}$ be the set valued functor on $\CNL_{\calO}$ that takes a $\CNL_{\calO}$-algebra $A$ to the set of $T$-framed deformations of type $\calS$. 
If $D_{\calS}^{\square_T}$ is representable, we call the representing object the \emph{universal type} $\calS$ $T$-\emph{framed deformation ring} and denote it by $R_{\calS}^{\square_T}$. 
If $T = \tildeS$, then we will write $D_{\calS}^\square$ and $R_{\calS}^\square$ for $D_{\calS}^{\square_T}$ and $R_{\calS}^{\square_T}$, respectively, and call $R_{\calS}^\square$ the \emph{universal type} $\calS$ \emph{framed deformation ring}.
\end{defn}

If $\calS=(F/F^+,S,\tildeS,\calO,\rbar,\mu,\{R_w\}_{w\in \tildeS})$ is a global deformation datum and $T\subseteq \tildeS$, we set
	\[ R_T^\square = \widehat{\otimes}_{w\in T} R_w^\square \quad \text{and} \quad
	R_{\calS,T}^{\loc} = \widehat{\otimes}_{w\in T} R_w \] 
Note that $R_{\calS,T}^{\loc}$ is naturally a quotient of $R_T^\square$. 
If $T = \tildeS$, then we will write $R_{\calS}^{\loc}$ for $R_{\calS,T}^{\loc}$. 

The following proposition follows from \cite{CHT}*{Proposition~2.2.9}.

\begin{prop}\label{thm:globalrep}
Let $\calS=(F/F^+,S,\tildeS,\calO,\rbar,\mu,\{R_w\}_{w\in \tildeS})$ be a global deformation datum, and let $T\subseteq \tildeS$. 
Assume $\rbar$ is Schur.

The functors $D_{\calS}^{\square_T}$ and $D_{\calS}$ are representable. 
There is a canonical $\CNL_{\calO}$-morphism $R_{\calS,T}^{\loc} \rightarrow R_{\calS}^{\square_T}$. 
There is a canonical $\CNL_{\calO}$-morphism $R_{\calS} \rightarrow R_{\calS}^{\square_T}$, and a choice of lift 
	\[ r_{\calS}^{\univ} : \Gal(F(S)/F^+) \lra \calG_d(R_{\calS}) \]
for the universal type $\calS$ deformation $[r_{\calS}^{\univ}]$ determines an extension of this $\CNL_{\calO}$-morphism to an isomorphism $R_{\calS}[[X_1,\ldots,X_{d^2\abs{T}}]] \stackrel{\sim}{\lra} R_{\calS}^{\square_T}$.

%
\end{prop}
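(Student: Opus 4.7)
The plan is to adapt the proof of \cite{CHT}*{Proposition~2.2.9} to the setting at hand, where $S$ may contain places inert in $F/F^+$ and $\tildeS$ need not contain a place above every $v\in S$. Neither of these generalizations affects the representability arguments or the computation of the relative dimension, so the proof goes through essentially unchanged.

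First I would verify Schlessinger's criteria for both $D_{\calS}^{\square_T}$ and $D_{\calS}$. The framed functor is representable by standard arguments without hypothesis on $\rbar$. For the unframed functor, the Schur hypothesis combined with the running assumption $p>2$ implies that the only element of $1+\M_d(\frakm_A)$ centralizing any type $\calS$ lift $r : G_{F^+}\to \calG_d(A)$ is the identity: by Schur, such a centralizing element must be a scalar $\lambda\cdot 1_d$, and comparing $g r(\sigma) g^{-1} = r(\sigma)$ for any $\sigma\in G_{F^+}\smallsetminus G_F$ (where $r(\sigma)\in \calG_d^0(A)\jmath$) forces $\lambda^2 = 1$, hence $\lambda = 1$ since $1+\frakm_A$ has no $2$-torsion when $p>2$. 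This triviality of the centralizer is precisely what is needed for Schlessinger's gluing axiom. Compatibility of the local conditions at $w\in \tildeS$ with the global deformation problem is built in through the conjugation-invariance property imposed on the quotients $R_w$ in \ref{def:globaldatum}.

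Second, the canonical morphism $R_{\calS,T}^{\loc}\to R_{\calS}^{\square_T}$ is constructed as follows: the universal $T$-framed type $\calS$ lift $(r,\{\alpha_w\}_{w\in T})$ produces at each $w\in T$ the locally rigidified lift $\alpha_w^{-1}(r\rest_{G_w})\alpha_w$ of $\rbar\rest_{G_w}$, whose classifying map $R_w^\square \to R_{\calS}^{\square_T}$ factors through $R_w$ by the invariance property in \ref{def:globaldatum}. Completing the tensor product over $w\in T$ yields the desired morphism, and the morphism $R_{\calS}\to R_{\calS}^{\square_T}$ is obtained simply by forgetting framings.

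Finally, for the isomorphism $R_{\calS}[[X_1,\ldots,X_{d^2\abs{T}}]] \stackrel{\sim}{\lra} R_{\calS}^{\square_T}$, fix a specific lift $r_{\calS}^{\univ}$ representing the universal type $\calS$ deformation. Given a $\CNL_{\calO}$-morphism $f : R_{\calS}\to A$ together with $\abs{T}$ elements of $1+\M_d(\frakm_A)$, one obtains the $T$-framed lift $(f_\ast(r_{\calS}^{\univ}),\{\alpha_w\}_{w\in T})$, and every $T$-framed deformation arises uniquely in this way because the triviality of the centralizer established above ensures that the diagonal $1+\M_d(\frakm_A)$-action on the set of framed lifts is free, so that once a representative lift is fixed there is no residual equivalence and the $\abs{T}$-tuple of framings contributes exactly $d^2\abs{T}$ free formal parameters. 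The most delicate step is the gluing axiom for $D_{\calS}$, where the triviality of the centralizer under Schur and $p>2$ is crucial; everything else is formal once that is in place.
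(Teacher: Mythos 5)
Your overall route is the right one, and it is essentially the argument the paper is invoking when it says the proposition ``follows from \cite{CHT}*{Proposition~2.2.9}'': triviality of the centralizer of a lift in $1+\M_d(\frakm_A)$ gives representability of $D_{\calS}$ and makes the $1+\M_d(\frakm_A)$-action on framed lifts free, the conjugation-invariance built into \ref{def:globaldatum} gives the map $R_{\calS,T}^{\loc} \rightarrow R_{\calS}^{\square_T}$ via the lifts $\alpha_w^{-1}(r\rest_{G_w})\alpha_w$, and the fibration of $D_{\calS}^{\square_T}$ over $D_{\calS}$ with fibre $(1+\M_d(\frakm_A))^{\abs{T}}$ yields the isomorphism $R_{\calS}[[X_1,\ldots,X_{d^2\abs{T}}]]\xrightarrow{\sim}R_{\calS}^{\square_T}$ once a lift $r_{\calS}^{\univ}$ is fixed.

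There is, however, a genuine gap in the key step. You deduce triviality of the centralizer by asserting that, ``by Schur, such a centralizing element must be a scalar $\lambda\cdot 1_d$,'' and then use the $\jmath$-component to force $\lambda^2=1$. But the Schur condition of \ref{def:schur} does \emph{not} imply that the centralizer of $\rbar\rest_{G_F}$ (or of a lift of it over an Artinian $A$) consists of scalars: Schur allows $\rbar\rest_{G_F}$ to be reducible, e.g.\ a direct sum of two non-isomorphic absolutely irreducible representations for which the duality condition fails, and then the centralizer of $\rbar\rest_{G_F}$ in $\M_d(\F)$ is two-dimensional. Your argument is correct in the special case that $\rbar\rest_{G_F}$ is absolutely irreducible (which is all the paper ever uses downstream), but the proposition is stated for Schur $\rbar$, so the step as written fails in the stated generality. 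The correct mechanism, which is what \cite{CHT} uses, is to prove $\ad(\rbar)^{\Gal(F(S)/F^+)}=0$ from the full Schur hypothesis together with $p>2$ (this is \cite{CHT}*{Lemma~2.1.7}; the $\jmath$-part of the adjoint action is what kills scalar invariants), and then to show by induction on the length of $A$, filtering by ideals $I$ with $\frakm_A I=0$, that any $g\in 1+\M_d(\frakm_A)$ centralizing the image of a lift $r$ satisfies $g-1\in\ad(\rbar)^{\Gal(F(S)/F^+)}\otimes_\F I=0$. With that substitution your remaining steps (factorization through $R_w$ via the invariance property, freeness of the framing action, and the count of $d^2\abs{T}$ formal variables) go through as you describe, and neither the possible inertness of places of $S$ in $F$ nor the fact that $\tildeS$ need not cover $S$ causes any difficulty, exactly as you note.
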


\subsubsection{}\label{sec:globalcomplete}
For the remainder of this section, we fix a global deformation datum 
	\[ \calS = (F/F^+,S,\tildeS,\calO,\rbar,\mu,\{R_w\}_{w\in \tildeS}), \]
with $\rbar$ Schur, where we will specify the rings $R_w$ in the statements of the following propositions. 
Let $R_{\calS}$ be the universal type $\calS$ deformation ring and let $x$ be a closed point of $\Spec R_{\calS}[1/p]$ with residue field $k$. 
Let $\calO_k$ be the ring of integers of $k$, and let $r$ be a type $\calS$ lift of $\rbar$ to $\calO_k$ such that the map $R_{\calS} \rightarrow \calO_k$ induced by $[r]$ induces $x: R_{\calS}[1/p] \rightarrow k$. 
Set $r_x = r \otimes_{\calO_k} k$.

\begin{prop}\label{thm:bigglobalcomplete} 
Let the notation and assumptions be as in \ref{sec:globalcomplete} above, with $R_w = R_w^\square$ for every $w\in \tildeS$.
\begin{enumerate}
\item\label{bigglobalcomplete:rep} Let $D_{\calS,r_x}$ be the functor on $\Ar_k$ that sends an $\Ar_k$-algebra $B$ to the set of deformations $[r_B]$ of $r_x$ to $B$ that factor through $\Gal(F(S)/F^+)$ and satisfy $\nu\circ r_B = \mu$. 
The functor $D_{\calS,r_x}$ is prorepresented by $(R_{\calS})_x^\wedge$.
\item\label{bigglobalcomplete:tangent}  The tangent space of $R_{\calS}[1/p]$ at $x$ is canonically isomorphic to $H^1(F(S)/F^+,\ad(r_x))$.
\item\label{bigglobalcomplete:dim} Let $S_\infty$ be the set of infinite places in $F^+$, and for every $v\in S_\infty$, let $c_v$ be a choice of complex conjugation at $v$. 
Then $(R_{\calS})_x^\wedge$ is isomorphic to a power series over $k$ in $g = \dim_k H^1(F(S)/F^+,\ad(r_x))$ variables modulo $r$ relations with $r \le \dim_k H^2(F(S)/F^+,\ad(r_x))$, and 
	\[ g - r \ge d^2[F^+:\Q] - \sum_{v \in S_\infty} \frac{d(d+\mu(c_v))}{2}. \]
\end{enumerate}
\end{prop}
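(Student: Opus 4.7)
The plan is to mirror the strategy behind Theorem~\ref{thm:defcomplete}, bootstrapping the pro-representability of $D_{\calS}$ on $\CNL_{\calO}$ from Proposition~\ref{thm:globalrep} to a representability statement on $\Ar_k$ via localization and completion at $x$, and then computing tangent and obstruction spaces via Galois cohomology.

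For part~\ref{bigglobalcomplete:rep}, after possibly enlarging $\calO$ so that the residue field of $\calO_k$ coincides with $\F$ (which does not affect the isomorphism class of $(R_{\calS})_x^\wedge$ as a local $k$-algebra), the point $x$ corresponds to a $\CNL_{\calO}$-morphism $R_{\calS} \to \calO_k$ whose kernel, after inverting $p$, is the maximal ideal $\ker(x)$. Following the argument of \cite{KisinFinFlat}*{Lemma~2.3.3 and Proposition~2.3.5} applied to the pro-representable functor $D_{\calS}$, exactly as in Theorem~\ref{thm:defcomplete}, for any $B \in \Ar_k$ the set of local $k$-algebra morphisms $(R_{\calS})_x^\wedge \to B$ is canonically identified with the set of $\CNL_{\calO}$-morphisms $R_{\calS} \to B$ lifting $x$, and by the universal property of $R_{\calS}$ this set is in natural bijection with $D_{\calS, r_x}(B)$.

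For part~\ref{bigglobalcomplete:tangent}, I apply part~\ref{bigglobalcomplete:rep} with $B = k[\varepsilon]$ and write a lift as $r_\kappa = (1+\varepsilon\kappa) r_x$ for a set-theoretic map $\kappa : G_{F^+} \to \ad(r_x)$. The condition that $r_\kappa$ be a homomorphism translates to $\kappa$ being a continuous $1$-cocycle, $1+\M_d(\frakm_{k[\varepsilon]})$-conjugacy corresponds to modifying $\kappa$ by coboundaries, and the requirement that $r_\kappa$ factor through $\Gal(F(S)/F^+)$ places $\kappa$ in $Z^1(F(S)/F^+, \ad(r_x))$. The condition $\nu \circ r_\kappa = \mu$ is automatic since $1 + \varepsilon \ad(r_x) \subseteq \ker(\nu) \subseteq \calG_d(k[\varepsilon])$. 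No additional local conditions at places in $\tildeS$ intervene because $R_w = R_w^\square$ for every $w \in \tildeS$, yielding the canonical identification with $H^1(F(S)/F^+, \ad(r_x))$.

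For part~\ref{bigglobalcomplete:dim}, standard obstruction theory for representable deformation functors provides a minimal presentation $(R_{\calS})_x^\wedge \cong k[[X_1,\ldots,X_g]]/J$ with $g = \dim_k H^1(F(S)/F^+, \ad(r_x))$ and $J$ minimally generated by at most $\dim_k H^2(F(S)/F^+, \ad(r_x))$ elements: for any small extension $B' \twoheadrightarrow B$ in $\Ar_k$ with kernel $I$ annihilated by $\frakm_{B'}$, the obstruction to lifting a type $\calS$ deformation in $D_{\calS,r_x}(B)$ to one in $D_{\calS,r_x}(B')$ is computed via the usual $2$-cocycle attached to a set-theoretic lift and lies in $H^2(F(S)/F^+, \ad(r_x)) \otimes_k I$. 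The stated inequality is then immediate from the Euler--Poincar\'{e} computation of~\ref{thm:EPchar}:
\[ g - r \;\ge\; \dim_k H^1(F(S)/F^+,\ad(r_x)) - \dim_k H^2(F(S)/F^+,\ad(r_x)) \;=\; \dim_k H^0(F(S)/F^+,\ad(r_x)) + d^2[F^+:\Q] - \sum_{v\in S_\infty} \frac{d(d+\mu(c_v))}{2}, \]
combined with $\dim_k H^0(F(S)/F^+,\ad(r_x)) \ge 0$. The main technical obstacle is the careful bookkeeping in part~\ref{bigglobalcomplete:rep} needed to verify that Kisin's argument for the framed local deformation functor transports verbatim to the global $\calG_d$-valued setting and to the passage from $\CNL_{\calO}$ to $\Ar_k$; once this is in place, parts~\ref{bigglobalcomplete:tangent} and~\ref{bigglobalcomplete:dim} are formal consequences of Schlessinger--Mazur theory together with Lemma~\ref{thm:EPchar}.
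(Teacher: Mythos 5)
Your proposal is correct and is essentially the paper's own proof: part~\ref{bigglobalcomplete:rep} via Kisin's localization-and-completion technique from \cite{KisinFinFlat}*{Lemma~2.3.3 and Proposition~2.3.5} exactly as in \ref{thm:defcomplete}, part~\ref{bigglobalcomplete:tangent} via the cocycle parametrization of lifts to $k[\varepsilon]$, and part~\ref{bigglobalcomplete:dim} via the obstruction $2$-cocycle presentation combined with \ref{thm:EPchar} (your use of $\dim_k H^0 \ge 0$ in place of the paper's appeal to the Schur condition, which gives $H^0 = 0$, is an inessential variant). The one point to phrase more carefully in part~\ref{bigglobalcomplete:rep}: since an $\Ar_k$-algebra $B$ is not a $\CNL_{\calO}$-algebra, the identification with $D_{\calS,r_x}(B)$ does not come from the universal property of $R_{\calS}$ applied to $B$ directly, but from the direct limit over finite $\calO$-subalgebras $A\subseteq B$ with $A[1/p]=B$, together with the descent of any continuous $\calG_d(B)$-valued deformation (and of conjugating matrices in $1+\M_d(\frakm_B)$) to such an $A$ --- precisely the bookkeeping you flag at the end, and which the paper carries out following \cite{KisinOverConvFM}*{Proposition~9.5}.
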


\begin{proof}
The proof of \ref{bigglobalcomplete:rep} is almost identical to that of \cite{KisinFinFlat}*{Proposition~2.3.5}. We sketch the details. 
We will not use the language of groupoids here, but the results we will reference from \cite{KisinFinFlat} stated in terms of groupoids imply our results stated in terms of functors by \cite{KisinFinFlat}*{\S A.5}. 

Since $R_w^\square$, for $w\in \tildeS$, imposes no condition on our lifts, it is easy to see that if $\calS'$ is the deformation datum
 	\[\calS' = (F/F^+,S,\emptyset,\calO,\rbar,\mu, \emptyset), \]
then there is a canonical isomorphism $R_{\calS} \cong R_{\calS'}$. 
For the remainder of the proof, we assume $\tildeS = \emptyset$.  

For any $\Ar_k$-algebra $B$, we let $\mathrm{Int}(B)$ be the set of all finite $\calO$-subalgebras $A \subset B$ such that $A[1/p] = B$. 
Note that any $A \in \mathrm{Int}(B)$ comes equipped with a canonical $\calO$-algebra map $A \rightarrow \calO_k$ via $A \subset B \rightarrow B/\frakm_B = k$. 
Also note that $\mathrm{Int}(B)$ is naturally filtered. 
We let $D_{\calS,(r)}$ be the set valued functor on $\Ar_k$ defined by
	\[ D_{\calS,(r)}(B) = \varinjlim_{A\in \mathrm{Int}(B)} \{ [r_A] \text{ is a type } \calS \text{ deformation of } \rbar \text{ such that } [r_A]\otimes_A \calO_k = [r] \}. \]
By \cite{KisinFinFlat}*{Lemma~2.3.3}, the functor $D_{\calS,(r)}$ is prorepresented by $(R_{\calS})_x^\wedge$. 
There is a natural morphism of functors $D_{\calS,(r)} \rightarrow D_{\calS, r_x}$ that we wish to show is an isomorphism. 
For any $\Ar_k$-algebra $B$, and continuous homomorphism $r_B : \Gal(F(S)/F^+) \rightarrow \calG_d(B)$, an argument as in \cite{KisinOverConvFM}*{Proposition~9.5} shows that there is some $A\in \mathrm{Int}(B)$ and a continuous homomorphism $r_A : \Gal(F(S)/F^+) \rightarrow \calG_d(A)$ such that $r_B = r_A \otimes_A B$. 
If $r_B \in D_{\calS,r_x}(A)$, then such an $r_A$ must by type $\calS$, so $D_{\calS,(r)} \rightarrow D_{\calS,r_E}$ is surjective. 
To see that it is injective, note that for any $\Ar_k$-algebra $B$ and any $g\in 1+\M_d(\frakm_B)$, the $\calO$-algebra generated by the entries of $g$ is finite over $\calO$. 
Thus, any two lifts of $r_x$ to an $\Ar_k$-algebra $B$ defining the same deformation arise from lifts to some $A\in \mathrm{Int}(B)$ that define the same deformation to $A$. 
This finishes the proof of part~\ref{bigglobalcomplete:rep}.

For part~\ref{bigglobalcomplete:tangent}, letting $Z^1(F(S)/F^+,\ad(r_x))$ be the space of continuous $1$-cocycles of $\Gal(F(S)/F^+)$ with values in $\ad(r_x)$, the map $\kappa \mapsto r_\kappa: = (1+\varepsilon \kappa)r_x$ defines an isomorphism from $Z^1(F(S)/F^+,\ad(r_x))$ to the $k$-vector space of lifts $r_{\varepsilon}$ of $r_x$ to the dual numbers $k[\varepsilon]$ that factor through $\Gal(F(S)/F^+)$ and satisfy $\nu \circ r_{\varepsilon} = \mu$. 
Two such cocycles $\kappa$ and $\kappa'$ are cohomologous if and only if $r_\kappa$ and $r_{\kappa'}$ are conjugate by an element of $1+\varepsilon\M_d(k)$, so this map induces a canonical isomorphism from $H^1(F(S)/F^+,\ad(r_x))$ to the $k$-vector space of deformations $[r_{\varepsilon}]$ of $r_k$ to $k[\varepsilon]$ with $\nu \circ r_{\varepsilon} = \mu$, which is isomorphic to the tangent space of $R_{\calS}[1/p]$ at $x$ by part~\ref{bigglobalcomplete:rep}.

We now show part~\ref{bigglobalcomplete:dim}. 
By part~\ref{bigglobalcomplete:tangent}, we can fix a surjection $A: = k[[X_1,\ldots,X_g]] \rightarrow (R_{\calS})_x^\wedge$ with $g = \dim_k H^1(F(S)/F^+,\ad(r_x))$ that induces an isomorphism on tangent spaces. 
Let $J$ denote its kernel. 
Choose a lift $r_x^{\univ} : \Gal(F(S)/F^+) \rightarrow \calG_d((R_{\calS})_x^\wedge)$ of $r_x$ in the universal $(R_{\calS})_x^\wedge$-valued deformation. 
We view $A/\frakm_A J$ and $(R_{\calS})_x^\wedge$ as topological rings with the topology as inverse limits of finite dimensional $k$-vector spaces. 
We can choose a continuous set-theoretic section $s_0 : (R_{\calS})_x^\wedge \rightarrow A/\frakm_A J$ of the surjection $A/\frakm_A J  \rightarrow (R_{\calS})_x^\wedge$ with the property that $s_0(a) = a$ for any $a\in k$. 
Then $s = s_0 \circ r_x^{\univ}$ is a continuous set-theoretic lift $s : \Gal(F(S)/F^+) \rightarrow \calG_d(A/\frakm_A J)$ of $r_x^{\univ}$ with the property that $\nu\circ s(\sigma) = \mu(\sigma)$ for all $\sigma \in \Gal(F(S)/F^+)$. 
We can then define a $2$-cocycle $\kappa$ of $\Gal(F(S)/F^+)$ valued in $\ad(r_x) \otimes_k J/\frakm_A J$ by 
	\begin{align*}(\kappa(\sigma,\tau), 1) = s(\sigma\tau) s(\sigma)^{-1} s(\tau)^{-1} &\in \ker(\calG_d^0(A/\frakm_A J) \rightarrow \calG_d^0((R_{\calS})_x^\wedge) \\
	& \cong (\ad(r_x) \otimes_k J/\frakm_A J) \times (1+J)/(1+\frakm_A J). \end{align*}
The cohomology class
	\[ [\kappa] \in H^2(F(S)/F^+,\ad(r_x)\otimes_k J/\frakm_A J) \cong H^2(F(S)/F^+,\ad(r_x)) \otimes_k J/\frakm_A J \]
does not depend on our choices. 
The argument of \cite{MazurDefGalRep}*{\S1.6} shows that the map $\Hom_k (J/\frakm_A J, k) \rightarrow H^2(F(S)/F^+,\ad(r_x))$ given by $f \mapsto (1\otimes f)([\kappa])$ is injective. 
Hence, $(R_{\calS})_x^\wedge$ is isomorphic to a power series over $k$ in $g$ variables modulo $r$ relations with $r \le \dim_k H^2(F(S)/F^+,\ad(r_x))$. 
Since $\rbar$ is Schur, $\ad(\rbar)^{G_{F^+}} = 0$ (see \cite{CHT}*{Lemma~2.1.7}), which implies $\ad(r_x)^{G_{F^+}} = 0$. 
The final claim now follows from \ref{thm:EPchar}.
\end{proof}

\begin{prop}\label{thm:smallglobalcomplete} 
Let the assumptions and notation be as in \ref{sec:globalcomplete}. 
Assume that $\mu$ is \mbox{de Rham}. 
Assume that for every $v|p$ in $F^+$, $v$ splits in $F$ and $\tildeS_p$ contains a place above $v$, which we will denote by  $\tilde{v}$. 
For each $\tilde{v} \in \tildeS_p$, we fix a Galois type $\tau_{\tilde{v}}$ and a $p$-adic Hodge type $\mathbf{v}_{\tilde{v}}$ over $E$ (see \ref{sec:types}). 
Take $R_{w} = R_{\tilde{v}}^\square(\tau_{\tilde{v}},\mathbf{v}_{\tilde{v}})$ (see \ref{thm:pdefring}) for $w = \tilde{v} \in \tildeS_p$, and $R_w = R_w^\square$ for each $w\in \tildeS \smallsetminus \tildeS_p$.
%
\begin{enumerate}
\item\label{smallglobalcomplete:rep} Let $D_{\calS,r_x}$ be the functor on $\Ar_k$ that sends an $\Ar_k$-algebra $B$ to the set of deformations $[r_B]$ of $r_x$ to $B$ that factor through $\Gal(F(S)/F^+)$, satisfy $\nu\circ r_B = \mu$, and are such that $r_B\rest_{G_{\tilde{v}}}$ is potentially semistable with Galois type $\tau_{\tilde{v}}$ and $p$-adic Hodge type $\mathbf{v}_{\tilde{v}}$ for each $\tilde{v}\in \widetilde{S}_p$. The functor $D_{\calS,r_x}$ is prorepresented by $(R_{\calS})_x^\wedge$.
\item\label{smallglobalcomplete:tangent}  The tangent space of $R_{\calS}[1/p]$ at $x$ is canonically isomorphic to $H_g^1(F(S)/F^+,\ad(r_x))$.
\end{enumerate}
\end{prop}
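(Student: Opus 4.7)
The plan is to prove both parts by essentially formal variants of the argument for \ref{thm:bigglobalcomplete}, with Kisin's theorem~\ref{thm:pdefring} used to incorporate the potentially semistable conditions at places in $\tildeS_p$.

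For part~\ref{smallglobalcomplete:rep}, I would mimic the proof of part~\ref{bigglobalcomplete:rep} of \ref{thm:bigglobalcomplete} by introducing the intermediate functor $D_{\calS,(r)}$ on $\Ar_k$ whose value on $B$ is the direct limit, over $A \in \mathrm{Int}(B)$, of those type $\calS$ deformations $[r_A]$ of $\rbar$ to $A$ (now in the sense of the present proposition) satisfying $[r_A] \otimes_A \calO_k = [r]$. As in \locit, \cite{KisinFinFlat}*{Lemma~2.3.3} shows that $D_{\calS,(r)}$ is prorepresented by $(R_{\calS})_x^\wedge$, and the argument of \cite{KisinOverConvFM}*{Proposition~9.5} produces, for any lift $r_B$ representing an element of $D_{\calS,r_x}(B)$, some $A \in \mathrm{Int}(B)$ and a lift $r_A : \Gal(F(S)/F^+) \to \calG_d(A)$ with $r_A \otimes_A B = r_B$. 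The one new check is that such an $r_A$ is automatically of type $\calS$: since $R_{\tilde{v}}^\square(\tau_{\tilde{v}}, \mathbf{v}_{\tilde{v}})$ is $\calO$-flat, the map $R_{\tilde{v}}^\square \to A$ induced by $r_A|_{G_{\tilde{v}}}$ factors through $R_{\tilde{v}}^\square(\tau_{\tilde{v}}, \mathbf{v}_{\tilde{v}})$ if and only if the corresponding map $R_{\tilde{v}}^\square[1/p] \to B$ does, which by \ref{thm:pdefring} happens exactly when $r_B|_{G_{\tilde{v}}}$ is potentially semistable with Galois type $\tau_{\tilde{v}}$ and $p$-adic Hodge type $\mathbf{v}_{\tilde{v}}$. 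For $w \in \tildeS \smallsetminus \tildeS_p$, no additional condition is imposed since $R_w = R_w^\square$, and the resulting isomorphism $D_{\calS,(r)} \xrightarrow{\sim} D_{\calS, r_x}$ goes through verbatim.

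For part~\ref{smallglobalcomplete:tangent}, I would specialize part~\ref{smallglobalcomplete:rep} to $B = k[\varepsilon]$ and follow the cocycle computation from the proof of part~\ref{bigglobalcomplete:tangent} of \ref{thm:bigglobalcomplete}. A deformation $[r_\varepsilon]$ of $r_x$ with $\nu \circ r_\varepsilon = \mu$, factoring through $\Gal(F(S)/F^+)$, corresponds to a class $[\kappa] \in H^1(F(S)/F^+, \ad(r_x))$ via $r_\kappa = (1 + \varepsilon \kappa) r_x$. Applying \ref{thm:localBK} at each $\tilde{v} \in \tildeS_p$, the condition that $r_\kappa|_{G_{\tilde{v}}}$ is potentially semistable of the prescribed type is equivalent to $\kappa|_{G_{\tilde{v}}}$ having trivial image in $H^1(F_{\tilde{v}}, B_{\dR} \otimes_{\Q_p} \ad(r_x|_{G_{\tilde{v}}}))$. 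Since every $v|p$ in $F^+$ splits in $F$ and $\tildeS_p$ contains some $\tilde{v}|v$, we have $F_v^+ = F_{\tilde{v}}$ and $G_v = G_{\tilde{v}}$, so this condition is precisely $[\kappa] \in H_g^1(F(S)/F^+, \ad(r_x))$, yielding the claimed identification.

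The main obstacle, such as it is, lies in part~\ref{smallglobalcomplete:rep}: a priori Kisin's theorem~\ref{thm:pdefring} constrains only maps out of the generic fibre $R_{\tilde{v}}^\square[1/p]$, and one must pass back to the integral statement about $R_{\tilde{v}}^\square \to A$. The $\calO$-flatness of $R_{\tilde{v}}^\square(\tau_{\tilde{v}}, \mathbf{v}_{\tilde{v}})$ is the key fact that makes this descent possible; once it is in hand, the remainder of the argument is formal.
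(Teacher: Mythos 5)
Your proposal is correct, but it organizes part~\ref{smallglobalcomplete:rep} differently from the paper. The paper does not re-run the $\mathrm{Int}(B)$ argument for the datum with potentially semistable conditions: after the same reduction to $\tildeS = \tildeS_p$, it introduces the auxiliary datum $\calS^{\mathrm{big}}$ with unrestricted local conditions, observes $R_{\calS} \cong R_{\calS^{\mathrm{big}}} \otimes_{R_{\tildeS_p}^\square} R_{\calS}^{\loc}$, invokes part~\ref{bigglobalcomplete:rep} of \ref{thm:bigglobalcomplete} for $\calS^{\mathrm{big}}$, and then applies \ref{thm:pdefring} only to the resulting $B$-valued points (each $B \in \Ar_k$ being a finite $E$-algebra), so it never needs to know that the integral models $r_A$ over $A \in \mathrm{Int}(B)$ satisfy the type-$\calS$ condition. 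You instead repeat the limit argument and supply exactly that check; your check is valid, though the fact doing the work is the injectivity of $A \hookrightarrow A[1/p] = B$ (automatic since $A$ sits inside a $k$-algebra, hence is $\calO$-flat), which lets you test the factorization of $R_{\tilde{v}}^\square \to A$ through $R_{\tilde{v}}^\square(\tau_{\tilde{v}},\mathbf{v}_{\tilde{v}})$ after inverting $p$, where \ref{thm:pdefring} applies — the $\calO$-flatness of $R_{\tilde{v}}^\square(\tau_{\tilde{v}},\mathbf{v}_{\tilde{v}})$ itself is not really the point. Your route is more self-contained; the paper's avoids redoing the patching-to-integral-models step at the cost of the tensor-product bookkeeping. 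For part~\ref{smallglobalcomplete:tangent} the two arguments agree up to one point: the deformation condition only constrains the chosen places $\tilde{v} \in \tildeS_p$, while $H_g^1(F(S)/F^+,\ad(r_x))$ is cut out by all $v|p$ of $F^+$; the paper bridges this by using the de Rham-ness of $\mu$ to get the condition at the conjugate places $\tilde{v}^c$, whereas your identification $G_v = G_{\tilde v}$ amounts to the (equally valid) observation that for a class over $F^+$ the kernel of the $B_{\dR}$-restriction at $v$ is independent of the choice of place of $F$ above $v$, the two decomposition groups being conjugate in $\Gal(F(S)/F^+)$; it would be worth making that independence explicit, but it is not a gap.
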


\begin{proof}
Since $R_w^\square$, for $w\in \tildeS\smallsetminus\widetilde{S}_p$, imposes no condition on our lifts, it is easy to see that if $\calS'$ is the deformation datum
 	\[\calS' = (F/F^+,S,\widetilde{S}_p,\calO,\rbar,\mu, 
 	\{R_{\tilde{v}}^\square(\tau_{\tilde{v}},\mathbf{v}_{\tilde{v}})\}_{\tilde{v} \in \widetilde{S}_p}), \]
there is a canonical isomorphism $R_{\calS} \cong R_{\calS'}$, and we may assume $\tildeS = \widetilde{S}_p$.  

Let $\calS^{\mathrm{big}} = (F/F^+,S,\tildeS_p,\calO,\rbar,\mu,\{R_{\tilde{v}}^\square\}_{\tilde{v}\in \tildeS_p})$, and let $R_{\calS^{\mathrm{big}}}$ be the universal type $\calS^{\mathrm{big}}$ deformation ring. 
Note $R_{\calS^{\mathrm{big}}}^{\loc} = R_{\tildeS_p}^\square$. 
By \ref{thm:globalrep}, choosing a lift $r_{\calS^{\mathrm{big}}}^{\univ}$ in the universal type $\calS^{\mathrm{big}}$ deformation yields a morphism $R_{\tildeS_p}^\square \rightarrow R_{\calS^{\mathrm{big}}}$. 
It is easy to see that $R_{\calS} \cong R_{\calS^{\mathrm{big}}} \otimes_{R_{\tildeS_p}^\square} R_{\calS}^{\loc}$, with $R_{\tildeS_p}^\square \rightarrow R_{\calS}^{\loc}$ the natural surjection. 
Using this and part~\ref{bigglobalcomplete:rep} of \ref{thm:bigglobalcomplete}, we see that $(R_{\calS})_x^\wedge$ prorepresents the functor on $\Ar_k$ that sends an $\Ar_k$-algebra $B$ to the set of deformations $[r_B]$ of $r_x$ to $B$ that factor through $\Gal(F(S)/F^+)$, satisfy $\nu\circ r_B = \mu$, and are such that the induced map 
	\[ R_{\tilde{v}}^\square \lra R_{\calS^{\mathrm{big}}} \lra (R_{\calS^{\mathrm{big}}})_x^\wedge \lra B \] 
factors through $R_{\tilde{v}}^\square(\tau_{\tilde{v}},\mathbf{v}_{\tilde{v}})$ for each $\tilde{v} \in \tildeS_p$, which happens if and only if $r_B\rest_{G_{\tilde{v}}}$ is potentially semistable with Galois type $\tau_{\tilde{v}}$ and $p$-adic Hodge type $\mathbf{v}_{\tilde{v}}$ by \ref{thm:pdefring}. 
This shows part~\ref{smallglobalcomplete:rep}.

We now show part~\ref{smallglobalcomplete:tangent}. 
Part~\ref{smallglobalcomplete:rep} together with part~\ref{bigglobalcomplete:tangent} of \ref{thm:bigglobalcomplete}, implies that the tangent space of $R_{\calS}[1/p]$ at $x$ is canonically isomorphic to the subspace of $H^1(F(S)/F^+,\ad(r_x))$ consisting of cohomology classes $[\kappa]$ such that for any cocycle $\kappa$ in the class $[\kappa]$, the local lift
%
	\[ r_\kappa\rest_{G_{\tilde{v}}} = ((1+\varepsilon \kappa)r_x)\rest_{G_{\tilde{v}}} = (1+\varepsilon \kappa|_{G_{\tilde{v}}})(r_x\rest_{G_{\tilde{v}}}) \]
lies in the tangent space of $R_{\tilde{v}}^\square(\tau_{\tilde{v}},\mathbf{v}_{\tilde{v}})[1/p]$ at $x$, for each $\tilde{v} \in \tildeS_p$.  
By \ref{thm:localBK}, this happens if and only if
$[\kappa]$ lies in the kernel of
	\[ H^1(F(S)/F^+,\ad(r_x)) \rightarrow \prod_{\tilde{v}\in \widetilde{S}_p} H^1(F_{\tilde{v}},B_{\dR}\otimes_{\Q_p}\ad(r_x)). \]
Since $\mu$ is \mbox{de Rham}, $r_\kappa\rest_{G_{\tilde{v}^c}} \cong (r_\kappa\rest_{G_{\tilde{v}}})^c \cong (r_\kappa\rest_{G_{\tilde{v}}})^\vee \otimes \mu$ is \mbox{de Rham} for each $\tilde{v}\in \widetilde{S}_p$. 
Thus, the tangent space of $R_{\calS}[1/p]$ at $x$ is canonically isomorphic to
	\begin{align*} 
	&\ker \big( H^1(F(S)/F^+,\ad(r_x)) \rightarrow \prod_{\tilde{v} \in \widetilde{S}_p} H^1(F_{\tilde{v}},B_{\dR} \otimes_{\Q_p}\ad(r_x)) \big) \\
	&\quad\quad\quad = \ker \big( H^1(F(S)/F^+,\ad(r_x)) \rightarrow \prod_{w|p \text{ in } F} H^1(F_w,B_{\dR}\otimes_{\Q_p}\ad(r_x)) \big) \\
	&\quad\quad\quad = \ker \big( H^1(F(S)/F^+,\ad(r_x)) \rightarrow \prod_{v|p \text{ in } F^+} H^1(F_v^+,B_{\dR} \otimes_{\Q_p}\ad(r_x)) \big). \qedhere
	\end{align*} 
\end{proof}

We note that in \ref{thm:smallglobalcomplete} above, the existence of $x \in \Spec R_{\calS}[1/p]$ implicitly assumes that $R_{\calS}[1/p] \ne 0$, which (at the very least) implies a certain compatibility between $\mu$ and the local $p$-adic Hodge theory data $\tau_{\tilde{v}}$ and $\mathbf{v}_{\tilde{v}}$. 
In our applications, we will be given a potentially semistable $p$-adic representation $\rho$ and we will define $\calS$ using $\rho$. 
The existence of this $\rho$ will then imply $R_{\calS}[1/p] \ne 0$. 

\begin{prop}\label{thm:globalsmooth} 
Let the notation and assumptions be as in \ref{sec:globalcomplete}, with $R_w = R_w^\square$ for every $w\in \tildeS$.
Assume also that $\mu(c) = -1$ for every choice of complex conjugation $c \in G_{F^+}$, and that 
$r_x\rest_{G_w} : G_w \lra \GL_d(k)$ 
is \mbox{de Rham} with regular $p$-adic Hodge type (see \ref{sec:types}) for every $w|p$ in $F$.

If $H_g^1(F(S)/F^+,\ad(r_x)) = 0$, then $(R_{\calS})_x^\wedge$ is formally smooth of dimension $\frac{d(d+1)}{2}[F^+:\Q]$.	
\end{prop}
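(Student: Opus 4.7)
The plan is to combine the general presentation of $(R_{\calS})_x^\wedge$ furnished by part~\ref{bigglobalcomplete:dim} of Proposition~\ref{thm:bigglobalcomplete} with the cohomological consequences of the vanishing of the Bloch--Kato Selmer group established in Lemma~\ref{thm:cohom}. The whole argument is essentially a bookkeeping of dimensions, with no new input.

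First I would verify that Lemma~\ref{thm:cohom} applies to $r_x$. Its three hypotheses are: (i) $r_x\rest_{G_w}$ is \mbox{de Rham} with regular $p$-adic Hodge type at every $w \mid p$ in $F$, which is an assumption of the proposition; (ii) $\mu(c_v) = -1$ for every archimedean $v$, also an assumption; and (iii) $\ad(r_x)^{G_{F^+}} = 0$. For (iii), since $\rbar$ is Schur, \cite{CHT}*{Lemma~2.1.7} gives $\ad(\rbar)^{G_{F^+}} = 0$, and reducing modulo a uniformizer shows that any $G_{F^+}$-invariant in $\ad(r_x)$ would reduce to a nonzero invariant in $\ad(\rbar)$, contradiction. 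This is exactly the same reduction used at the end of the proof of Proposition~\ref{thm:bigglobalcomplete}(\ref{bigglobalcomplete:dim}).

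With the hypotheses in place, the assumption $H_g^1(F(S)/F^+,\ad(r_x)) = 0$ together with Lemma~\ref{thm:cohom} yields
\[ \dim_k H^1(F(S)/F^+,\ad(r_x)) = \tfrac{d(d+1)}{2}[F^+:\Q] \quad \text{and} \quad H^2(F(S)/F^+,\ad(r_x)) = 0. \]
Now I apply Proposition~\ref{thm:bigglobalcomplete}(\ref{bigglobalcomplete:dim}): the completion $(R_{\calS})_x^\wedge$ is isomorphic to a power series ring over $k$ in $g = \dim_k H^1(F(S)/F^+,\ad(r_x))$ variables modulo $r$ relations, where $r \le \dim_k H^2(F(S)/F^+,\ad(r_x)) = 0$. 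Therefore $r = 0$ and
\[ (R_{\calS})_x^\wedge \cong k[[X_1,\ldots,X_{\frac{d(d+1)}{2}[F^+:\Q]}]], \]
which is formally smooth over $k$ of the claimed dimension.

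There is no serious obstacle here; the genuine work has already been done in Lemma~\ref{thm:cohom} (where the bound on $H_g^1$ is converted into exact equalities for $H^1$ and $H^2$ using the Euler characteristic formula and local \mbox{de Rham} dimension counts) and in Proposition~\ref{thm:bigglobalcomplete}(\ref{bigglobalcomplete:dim}) (where the Galois-cohomological presentation of the completed deformation ring is established). The only mildly delicate point is the invariance statement $\ad(r_x)^{G_{F^+}} = 0$, but this is immediate from the Schur hypothesis on $\rbar$.
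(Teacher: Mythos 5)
Your proposal is correct and is essentially the paper's argument: both proofs rest on exactly the same two inputs, Proposition~\ref{thm:bigglobalcomplete} (the presentation of $(R_{\calS})_x^\wedge$ in terms of $H^1$ and $H^2$) and Lemma~\ref{thm:cohom} (which converts $H_g^1 = 0$ into the dimension count for $H^1$ and the vanishing of $H^2$), with the Schur hypothesis giving $\ad(r_x)^{G_{F^+}} = 0$ just as in the proof of Proposition~\ref{thm:bigglobalcomplete}(\ref{bigglobalcomplete:dim}). The only cosmetic difference is that you conclude via $r \le \dim_k H^2 = 0$ (unobstructedness), whereas the paper compares the lower bound $\dim (R_{\calS})_x^\wedge \ge \frac{d(d+1)}{2}[F^+:\Q]$ with the tangent space dimension from part~\ref{cohom:H1dim} of Lemma~\ref{thm:cohom}; these are two phrasings of the same deduction.
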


\begin{proof}
%
Using our assumption that $\mu(c) = -1$ for every complex conjugation $c \in G_{F^+}$, part~\ref{bigglobalcomplete:dim} of \ref{thm:bigglobalcomplete} implies 
	\[ \dim (R_{\calS})_x^\wedge \ge  \frac{d(d+1)}{2}[F^+:\Q]. \]
Thus, it suffices to show the tangent space of $(R_{\calS})_x^\wedge$, which is isomorphic to $H^1(F(S)/F^+,\ad(r_x))$ by part~\ref{bigglobalcomplete:tangent} of \ref{thm:bigglobalcomplete}, has $k$-dimension $\frac{d(d+1)}{2}[F^+:\Q]$. 
This follows from part~\ref{cohom:H1dim} of \ref{thm:cohom}.  
\end{proof}

\include*{AuxPrimes}

\section{Automorphic theory}\label{sec:auttheory}

This section reviews the automorphic theory that we will use to prove our main theorems. 
We first introduce some notation and assumptions that will be used throughout this section. 


Let $\Z_+^d$ be the set of tuples of integers $(\lambda_1,\ldots,\lambda_d)$ such that $\lambda_1 \ge \cdots \ge \lambda_d$. 
If $F$ is a CM field with maximal totally real subfield $F^+$, and $\Omega$ is any characteristic $0$ field containing all embeddings of $F$ into an algebraic closure of $\Omega$, then we let $(\Z_+^d)_0^{\Hom(F,\Omega)}$ denote the subset of $(\Z_+^d)^{\Hom(F,\Omega)}$ of tuples $(\lambda_\tau)_{\tau\in \Hom(F,\Omega)}$ such that $\lambda_{\tau\circ c,i} = -\lambda_{\tau,d+1-i}$, where $c$ is the nontrivial element of $\Gal(F^+/F)$. 

If $F$ is a number field, $\chi : F^\times \backslash \A_F^\times \rightarrow \C^\times$ is a continuous character whose restriction to the connected component of $(F\otimes \R)^\times$ is given by $x \mapsto \prod_{\tau \in \Hom(F,\C)} x_\tau^{\lambda_\tau}$ for some integers $\lambda_\tau$, and $\iota : \Qbar_p \xrightarrow{\sim} \C$ is an isomorphism, we let $\chi_\iota : G_F \rightarrow \Qbar_p^\times$ be the continuous character given by 
	\[ \chi_\iota(\Art_F(x)) = \iota^{-1} \Big( \chi(x) \prod_{\tau \in \Hom(F,\C)} x_\tau^{-\lambda_\tau} \Big) 
	\prod_{\sigma\in \Hom(F,\Qbar_p)} x_\sigma^{\lambda_{\iota\sigma}}. \]

\subsection{Automorphic Galois representations}\label{sec:autgalrep}
Let $F$ be either a CM or totally real number field with maximal totally real subfield $F^+$. 
Let $c \in G_{F^+}$ be a choice of complex conjugation. 

\subsubsection{}\label{sec:pol} 
Following \cite{BLGGT}*{\S2.1}, we say that a pair $(\Pi,\chi)$ is a \emph{polarized automorphic representation} of $\GL_d(\A_F)$ if
\begin{itemize}
	\item $\Pi$ is an automorphic representation of $\GL_d(\A_F)$;
	\item $\chi : (F^+)^\times \backslash \A_{F^+}^\times \rightarrow \C^\times$ is a continuous character such that for all $v|\infty$, the value $\chi_v(-1)$ is independent of $v$, and equals $(-1)^d$ if $F$ is CM;
	\item $\Pi^c \cong \Pi^\vee \otimes (\chi\circ \Nm_{F/F^+}\circ \det)$.
	\end{itemize}
We say that an automorphic representation $\Pi$ of $\GL_d(\A_F)$ is \emph{polarizable} if there is a character $\chi$ such that $(\Pi,\chi)$ is a polarized automorphic representation.	
If $F$ is a totally real field and 
$(\Pi,1)$ is polarized, then we say that $\Pi$ is \emph{self-dual}.  
If $F$ is CM and 
$(\Pi,\delta_{F/F^+}^d)$ is polarized, then we say that $\Pi$ is \emph{conjugate self-dual}. 
Recall that an automorphic representation $\Pi$ of $\GL_d(\A_F)$ is called \emph{regular algebraic} if $\Pi_\infty$ has the same infinitesimal character as an irreducible algebraic representation of $\Res_{F/\Q} \GL_d$. 
If $\lambda = (\lambda_\tau) \in (\Z_+^d)^{\Hom(F,\C)}$, then we let $\xi_\lambda$ denote the irreducible algebraic representation of $\prod_{\tau}\GL_d$ which is the tensor product over $\tau \in \Hom(F,\C)$ of the irreducible algebraic representations with highest weight $\lambda_\tau$. 
We say a regular algebraic automorphic representation $\Pi$ of $\GL_d(\A_F)$ has \emph{weight} $\lambda \in (\Z_+^d)^{\Hom(F,\C)}$ if $\Pi_\infty$ has the same infinitesimal character as $\xi_\lambda^\vee$. 
We will say a polarized automorphic representation $(\Pi,\chi)$ of $\GL_d(\A_F)$ is cuspidal if $\Pi$ is. 
We will say a polarized automorphic representation $(\Pi,\chi)$ of $\GL_d(\A_F)$ is regular algebraic if $\Pi$ is. 
In this case $\chi$ is necessarily an algebraic character.


We have the following theorem, due to the work of many people. 
We refer the reader to \cite{BLGGT}*{Theorem~2.1.1} and the references contained there (noting that the assumption of an Iwahori fixed vector in part (4) of \cite{BLGGT}*{Theorem~2.1.1} can be removed by the main result of \cite{Caraianilp}). 


\begin{thm}\label{thm:autgalrep}
Let $F$ be either a CM or totally real number field with maximal totally real subfield $F^+$.
Let $(\Pi,\chi)$ be a regular algebraic, polarized, cuspidal automorphic representation of $\GL_d(\A_F)$, of weight $\lambda \in (\Z_+^d)^{\Hom(F,\C)}$. 
Fix a rational prime $p$ and an isomorphism $\iota : \Qbar_p \xrightarrow{\sim} \C$. 
Then there is a continuous semisimple representation 
	\[ \rho_{\Pi,\iota} : G_F \lra \GL_d(\Qbar_p) \]
satisfying the following properties. 
\begin{enumerate}
\item\label{autgalrep:pol} 
There is a perfect symmetric pairing $\langle \cdot, \cdot \rangle$ on $\Qbar_p^d$ 
such that for any $a,b\in \Qbar_p^d$ and $\sigma\in G_F$,
	\[ \langle \rho_{\Pi,\iota}(\sigma) a, \rho_{\Pi,\iota}(c\sigma c) b \rangle = (\epsilon^{1-d}\chi_\iota)(\sigma)\langle a, b \rangle. \] 
\item\label{autgalrep:pss} For all $w|p$, $\rho_{\Pi,\iota}|_{G_w}$ is potentially semistable, and for any continuous embedding $\tau : F_w \hookrightarrow \Qbar_p$, 
%
	\[ \HT_\tau(\rho_{\Pi,\iota}|_{G_w}) = \{ \lambda_{\iota\tau,j} + d - j\}_{j=1,\ldots,d}. \]
\item\label{autgalrep:locglob} For any finite place $w$, 
	\[\iota\WD(\rho_{\Pi,\iota}|_{G_w})^{\Fss} \cong \rec_{F_w}^T(\Pi_w). \]
\end{enumerate}
\end{thm}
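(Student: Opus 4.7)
The plan is not a self-contained proof but an assembly of results spanning the Langlands program, following the outline in \cite{BLGGT}*{\S2.1}. First I would reduce to the case where $F$ is CM and $(\Pi,\chi)$ is conjugate self-dual, meaning $\chi = \delta_{F/F^+}^d$. When $F$ is totally real, solvable base change along a suitable CM quadratic extension $F'/F$ produces a polarized cuspidal $\Pi_{F'}$ on $\GL_d(\A_{F'})$, and an argument of Patrikis--Taylor type descends the representation from $G_{F'}$ to $G_F$ using that $\rho_{\Pi,\iota}$ is absolutely irreducible and its polarization is unique up to scalar. When $F$ is CM and $\chi \ne \delta_{F/F^+}^d$, I would twist $\Pi$ by an algebraic Hecke character of $\A_F^\times$ chosen to absorb the discrepancy, reducing to the conjugate self-dual case; the resulting Galois-side twist is then inverted to recover $\rho_{\Pi,\iota}$ with the polarization of~\ref{autgalrep:pol}.

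With this reduction in hand, the construction proceeds by realizing $\rho_{\Pi,\iota}$ inside the cohomology of a unitary Shimura variety. I would use the stable trace formula together with the endoscopic classification for unitary groups (via Mok and Kaletha--Minguez--Shin--White, or the earlier Harris--Labesse descent) to transfer $\Pi$ to an automorphic representation $\pi$ on a unitary similitude group $G/\Q$ whose Shimura variety $\mathrm{Sh}_G$ is of PEL type, with $G(\R)$ of signature $(d-1,1)$ at one archimedean place and compact at the others. The representation $\rho_{\Pi,\iota}$ is then cut out of the $\pi^\infty$-isotypic component of $H^{d-1}(\mathrm{Sh}_G,\calL_\lambda)$, where $\calL_\lambda$ is the $\ell$-adic local system attached to the highest weight $\lambda$.

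The three required properties would then be read off this cohomological realization. Property~\ref{autgalrep:pol} comes from Poincar\'{e} duality in middle degree, with the $\epsilon^{1-d}\chi_\iota$ twist accounting for the weight of the pairing on cohomology. For property~\ref{autgalrep:pss}, I would apply the Faltings--Tsuji $p$-adic comparison isomorphism to $H^{d-1}(\mathrm{Sh}_G,\calL_\lambda)$ and analyze the Hodge filtration on the associated automorphic vector bundle; the shift by $d-j$ reflects our normalization $\HT_\tau(\epsilon) = -1$. For property~\ref{autgalrep:locglob} at places $w \nmid p$ where $\Pi_w$ admits an Iwahori-fixed vector, one invokes the Harris--Taylor, Taylor--Yoshida, and Shin analysis of nearby cycles on the integral model of $\mathrm{Sh}_G$; at $w \mid p$ the same cohomological construction combined with $p$-adic comparison yields both potential semistability and the Weil--Deligne representation.

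The hard part will be establishing~\ref{autgalrep:locglob} without the Iwahori hypothesis: the Shimura-variety construction initially controls only the Frobenius-semisimple part of $\WD(\rho_{\Pi,\iota}|_{G_w})$ modulo the precise shape of the monodromy operator $N$, and matching the full Frobenius-semisimplified Weil--Deligne representation $\rec_{F_w}^T(\Pi_w)$ requires identifying $N$ with the one predicted by Langlands functoriality. This is exactly the content of Caraiani's theorem \cite{Caraianilp}, whose proof combines a delicate study of the geometry of the integral model of $\mathrm{Sh}_G$ with stable trace formula inputs. Once Caraiani's theorem is invoked, the local-global compatibility in~\ref{autgalrep:locglob} holds at all finite places and the theorem can be assembled as stated.
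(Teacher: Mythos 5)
Your proposal takes essentially the same route as the paper: the paper gives no proof of this theorem at all, but simply cites \cite{BLGGT}*{Theorem~2.1.1} together with the remark that the Iwahori-fixed-vector hypothesis in its part~(4) is removed by Caraiani's theorem \cite{Caraianilp}, and your plan is precisely an assembly of that same body of work, with Caraiani's result correctly identified as the key input for unconditional local-global compatibility.

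That said, a few points in your sketch of how the cited results are themselves established are not accurate, and would be genuine gaps if you tried to execute the plan as written. First, your descent from CM to totally real $F$ invokes absolute irreducibility of $\rho_{\Pi,\iota}$ and uniqueness of its polarization; irreducibility is \emph{not} known in this generality (the theorem only asserts semisimplicity), and the standard argument instead patches the representations obtained from many different quadratic CM extensions of $F$. Second, the symmetry of the pairing in part~\ref{autgalrep:pol} is not a formal consequence of Poincar\'{e} duality in middle degree: duality gives the essential conjugate self-duality $\rho^c \cong \rho^\vee \otimes \epsilon^{1-d}\chi_\iota$, but that the polarization can be taken \emph{symmetric} is the Bella\"{i}che--Chenevier sign theorem, which this paper itself invokes separately in the proof of \ref{thm:galrepunit}. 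Third, not every conjugate self-dual $\Pi$ contributes directly to the cohomology of a signature $(d-1,1)$ unitary Shimura variety after endoscopic transfer; in the remaining (parity/sign-obstructed) cases the construction proceeds by the Chenevier--Harris $p$-adic interpolation and congruence argument, after which local-global compatibility, including at $w \mid p$ and without the Iwahori hypothesis, is restored by Caraiani's work. None of this changes the verdict that your overall strategy matches the paper's (which is pure citation), but the outline as stated oversimplifies these three steps.
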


We note that an argument using the Baire category theorem and the compactness of $G_F$ shows that we can assume $\rho_{\Pi,\iota}$ takes values in $\GL_d(\calO)$ with $\calO$ the ring of integers of some finite extension of $\Q_p$, and that the perfect pairing $\langle \cdot, \cdot \rangle$ descends to a perfect pairing on $\calO^d$.

\subsection{Definite unitary groups}\label{sec:unitary}
In this subsection, we assume that $p$ is odd. 
We recall some constructions from \cite{CHT}*{\S3.3} and \cite{GuerberoffModLift}*{\S2} (see also \cite{ThorneAdequate}*{\S6}). 
Before doing so, we note that in \cite{CHT}*{\S3.3} there are running assumptions that a certain ramification set denoted $S(B)$ there is nonempty, that (in our notation) $p > d$, and that $F$ is the composite of a quadratic imaginary field and $F^+$. 
None of these assumptions are necessary for what we need. 

Let $F$ be a CM field with maximal totally real subfield $F^+$. 
Let $c$ denote the nontrivial element of $\Gal(F/F^+)$. 
We assume that $F/F^+$ is unramified at all finite places and that every place above $p$ in $F^+$ splits in $F$. 
We assume that $F^+ \ne \Q$ and that if $d$ is even, then
	\[ d[F^+:\Q] \equiv 0 \pmod 4. \]
We also fix a finite extension $E$ of $\Q_p$ with ring of integers $\calO$ and residue field $\F$. 
We assume $E$ contains all embeddings of $F$ into an algebraic closure of $E$.

\subsubsection{}\label{sec:invol} 
Let $B = \M_d(F)$, and let $g \mapsto g^*$ be an involution of the second kind on $B$. 
Then the pair $(B,{}^*)$ defines a reductive $F^+$-group $G$ by
	\[ G(R) = \{ g\in B\otimes_{F^+} R \mid gg^* = 1\} \]
for any $F^+$-algebra $R$. 
Since $d[F^+:\Q] \equiv 0 \pmod 4$ if $d$ is even, we can choose the involution $g \mapsto g^*$ on $B$ such that 
\begin{enumerate}[label=(\alph*)]
	\item $G\otimes_{F^+} F_v^+$ is quasi-split for every finite place $v$ of $F^+$;
	\item $G(F_v^+) \cong U_d(\R)$, the totally definite unitary group, for every infinite place $v$ of $F^+$.
\end{enumerate}
Since $B = \M_d(F)$, the data of ${}^*$ is equivalent to a Hermitian form $h$ on $F^d$, and a choice of lattice $\calL$ in $F^d$ such that $h(\calL \times \calL) \subseteq \calO_F$ yields a maximal order 
$\calO_B$ of $B$ such that $\calO_B^* = \calO_B$. 
This maximal order defines a model of $G$ over $\calO_{F^+}$, that we again denote by $G$. 
For a finite place $v$ of $F^+$ that splits in $F$, 
we can find an isomorphism $\iota_v : \calO_{B,v} \xrightarrow{\sim} \M_d(\calO_{F}) \otimes_{\calO_{F^+}}\calO_{F_v^+}$ such that $\iota_v(g^*) = {}^t\iota_v(g)^c$. 
Writing $v = ww^c$, the choice of $w$ gives an isomorphism $\iota_w : G(\calO_{F_v^+}) \xrightarrow{\sim} \GL_d(\calO_{F_w})$ by $\iota_v^{-1}(g,{}^t(g^c)^{-1}) \mapsto g$. 
This extends to an isomorphism $\iota_w : G(F_v^+) \xrightarrow{\sim} \GL_d(F_w)$, and $\iota_{w^c} = {}^t(c\circ \iota_w)^{-1}$. 

Let $S_p$ denote the set of places of $F^+$ above $p$. 
For each $v\in S_p$, fix a choice $\widetilde{v}$ of place of $F$ dividing $v$, and let $\widetilde{S}_p = \{\widetilde{v} \mid v\in S_p\}$. 
Let $J_p$ be the set of embeddings $F^+ \hookrightarrow E$, and let $\widetilde{J}_p$ be the set of embeddings $F \hookrightarrow E$ that give rise to a place in $\widetilde{S}_p$. 
Thus, restriction to $F^+$ gives a bijection $\widetilde{J}_p \xrightarrow{\sim} J_p$.
Given $\lambda \in \Z_+^d$, we let
	\[ \xi_\lambda : \GL_d \lra \GL(W_\lambda) \]
denote the irreducible algebraic representation defined over $\Q$ with highest weight 
	\[ \mathrm{diag}(t_1,\ldots,t_d) \lra \prod_{i = 1}^d t_i^{\lambda_i}. \]
We choose a $\GL_d(\calO)$-stable lattice $M_\lambda$ in $W_\lambda \otimes_\Q E$. 
Then for any $\lambda = (\lambda_\tau) \in (\Z_+^d)_0^{\Hom(F,E)}$, we define a representation
	\begin{align*} \xi_\lambda : G(F_p^+) &\lra \GL(\otimes_{\tau\in \widetilde{J}_p} W_{\lambda_\tau}) \\ 
	g & \longmapsto \bigotimes_{\tau\in \widetilde{J}_p} \xi_{\lambda_\tau}(\tau \iota_\tau g),
	\end{align*}
where we have written $\iota_\tau$ for $\iota_w$ if $\tau$ gives rise to the place $w$. 
The $\calO$-lattice $M_\lambda : = \otimes_{\tau\in \widetilde{J}_p} M_{\lambda_\tau}$ in the $E$-vector space $W_\lambda : = \otimes_{\tau\in \widetilde{J}_p} W_{\lambda_\tau}$ is stable under $G(\calO_{F_p^+})$. 

If $A$ is any $\calO$-algebra and $U$ is an open compact subgroup of $G(\A_{F^+}^\infty)$ such that $U_p \subseteq G(\calO_{F_p^+})$, then we define a space of automorphic forms $S_\lambda(U,A)$ to be the space of functions
	\[ f : G(F^+) \backslash G(\A_{F^+}^\infty) \lra M_\lambda \otimes_{\calO} A \]
such that $f(gu) = u_p^{-1} f(g)$ for all $g\in G(\A_{F^+}^\infty)$ and $u \in U$. 
If $V$ is any compact subgroup of $G(\A_{F^+}^\infty)$ such that $V_p \subseteq G(\calO_{F_p^+})$, then we define
	\[ S_\lambda(V,A) = \varinjlim_{V\subseteq U} S_\lambda(U,A),\]
with the limit over all open compact subgroups $U$ containing $V$ such that $U_p \subseteq G(\calO_{F_p^+})$. 
Note that if $A$ is flat over $\calO$, then $S_\lambda(V,A) = S_\lambda(V,\calO) \otimes_{\calO} A$. 

\begin{prop}\label{thm:unitaryaut}
Fix an embedding $\iota : E \hookrightarrow \C$, and view $\C$ as an $\calO$-algebra via $\iota$. 
\begin{enumerate}
\item\label{unitaryaut:adm} $S_\lambda(\{1\},\C)$ is a semisimple admissible representation of $G(\A_{F^+}^\infty)$.
\item\label{unitaryaut:BC} Let $\Pi$ be an regular algebraic conjugate self-dual representation of $\GL_d(\A_F)$ of weight $\iota\lambda$. 
Then there is an irreducible subrepresentation $\pi = \otimes_v\pi$ of $S_\lambda(\{1\},\C)$ such that the following hold. 
	\begin{itemize}
	\item If $v$ is a finite place of $F^+$ that splits as $v = ww^c$ in $F$, then $\pi_v \cong \Pi_w \circ\iota_w$.
	\item If $v$ is a finite place of $F^+$ inert in $F$, and $\Pi_v$ is unramified, then $\pi_v$ has a fixed vector for some  hyperspecial maximal compact subgroup of $G(F_v^+)$.
	\end{itemize}
\end{enumerate}
\end{prop}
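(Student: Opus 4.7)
The plan is to combine class-number finiteness and discrete decomposition of automorphic forms on the compact-at-infinity group $G$ for part~(1), and to invoke descent from $\GL_d/F$ to $G/F^+$ for part~(2).

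For part~(1), since $G(F^+ \otimes_\Q \R)$ is compact, standard finiteness of class number for reductive groups implies that $G(F^+) \backslash G(\A_{F^+}^\infty)/U$ is a finite set for every open compact subgroup $U \subseteq G(\A_{F^+}^\infty)$. Because $M_\lambda \otimes_\calO \C$ is finite-dimensional, so is each $S_\lambda(U,\C)$, which already yields admissibility of $S_\lambda(\{1\},\C)$. For semisimplicity, I would identify $S_\lambda(\{1\},\C)$ with the $\xi_\lambda$-isotypic part (under $G(F^+ \otimes_\Q \R)$, acting through $\iota$) of the smooth vectors in $L^2(G(F^+) \backslash G(\A_{F^+}))$; the twist by $u_p^{-1}$ in the definition of $S_\lambda$ is undone by the identification of the $p$-adic realization of $\xi_\lambda$ (through the embeddings in $\widetilde{J}_p$) with its archimedean realization via $\iota$. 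Compactness of $G(F^+ \otimes_\Q \R)$ gives a Hilbert direct sum decomposition of the $L^2$-space into irreducible admissible representations of $G(\A_{F^+})$, and each archimedean isotype is a finite direct sum, which yields semisimplicity.

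For part~(2), I would invoke the descent theorem for regular algebraic conjugate self-dual cuspidal automorphic representations of $\GL_d(\A_F)$ to automorphic representations of $G(\A_{F^+})$, due in its original form to Labesse and refined in the generality needed here by Mok (for quasi-split unitary groups) together with Kaletha--Minguez--Shin--White (for inner forms). This descent produces an automorphic representation $\pi$ of $G(\A_{F^+})$ whose base change to $\GL_d(\A_F)$ is $\Pi$, with local base change matching $\pi_v \cong \Pi_w \circ \iota_w$ at every split finite place $v = ww^c$. Regular algebraicity of $\Pi$ together with the weight condition force $\pi_\infty$ to be the irreducible representation of the compact group $G(F^+ \otimes_\Q \R)$ whose highest weight is determined by $\lambda$ under $\iota$, so by the identification in part~(1) the finite part $\pi^\infty$ appears as an irreducible subrepresentation of $S_\lambda(\{1\},\C)$. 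At an inert finite place $v$ with $\Pi_v$ unramified, the unramified local base change identifies $\Pi_v$ with the base change of an unramified representation of $G(F_v^+)$ with respect to some hyperspecial subgroup, which gives the required fixed vector.

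The main obstacle is locating a form of descent that applies to arbitrary CM extensions $F/F^+$ (with no hypothesis that $F$ contains an imaginary quadratic subfield) and yields the precise local matching at every split finite place, not merely outside a finite auxiliary set. This rules out some earlier partial results and requires the modern endoscopic classification for unitary groups; once that input is in hand, the remaining ingredients (identification of the archimedean type, unramified matching at inert places) are standard local computations.
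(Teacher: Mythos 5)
Your part~(1) argument is the standard one (finiteness of $G(F^+)\backslash G(\A_{F^+}^\infty)/U$ from compactness at infinity, identification with the $\xi_\lambda$-isotypic part of automorphic forms, semisimplicity from the discrete decomposition), and it is exactly the content of the result the paper simply cites, namely \cite{CHT}*{Proposition~3.3.2, part~1}; nothing to add there. For part~(2) your route is viable but genuinely different from, and heavier than, the paper's: you invoke the endoscopic classification for unitary groups (Mok, Kaletha--Minguez--Shin--White), whereas the paper gets by with Labesse's base change results alone --- \cite{LabesseBCCM}*{Th\'eor\`eme~5.4} produces an irreducible constituent $\pi$ of $S_\lambda(\{1\},\C)$ whose \emph{weak} base change is $\Pi$, and then \cite{LabesseBCCM}*{Corollaire~5.3} together with strong multiplicity one for $\GL_d$ upgrades this to the stated conclusions. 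Your closing worry --- that precise matching at \emph{every} split place and the absence of an imaginary quadratic subfield ``rule out earlier partial results and require the modern classification'' --- is not accurate: at a split place $v=ww^c$ the group $G(F_v^+)$ is a general linear group and local base change is the elementary identification $\pi_v\mapsto \pi_v\circ\iota_w^{-1}$, so once weak base change pins down $\Pi$ at almost all places, strong multiplicity one for $\GL_d$ forces $\pi_v\cong\Pi_w\circ\iota_w$ at all split $v$ and the hyperspecial statement at inert unramified $v$; no hypothesis that $F$ contain an imaginary quadratic field enters Labesse's theorems. The trade-off is that your approach, via the full classification, gives much finer information (exact multiplicities, local matching at every place including inert ramified ones), but at the cost of relying on Mok and on KMSW, the latter of which is substantially harder (and at the time of the paper not fully available) input, while the paper's choice of Labesse plus strong multiplicity one is the minimal input needed for exactly the two bulleted conclusions.
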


\begin{proof} 
For part~\ref{unitaryaut:adm}, see \cite{CHT}*{part~1 of Proposition~3.3.2}. 
For part~\ref{unitaryaut:BC}, \cite{LabesseBCCM}*{Th\'{e}or\`{e}me~5.4} implies that there is an irreducible subrepresentation $\pi = \otimes_v\pi$ of $S_\lambda(\{1\},\C)$ such that $\Pi$ is a weak base change (see \cite{LabesseBCCM}*{\S4.10}) of $\pi$. 
Then applying \cite{LabesseBCCM}*{Corollaire~5.3} and strong multiplicity one for $\GL_d$ gives the result.
\end{proof}

\subsubsection{}\label{sec:Heckeops} 
Now fix an open compact subgroup $U$ of $G(\A_{F^+}^\infty)$ such that $U_p \subseteq G(\calO_{F_p^+})$. 
Fix a set of finite places $S$ of $F^+$ containing $S_p$ and all finite places at which $U$ is not hyperspecial maximal compact. 
For each finite place $w$ of $F$ split over $v \notin S$ of $F^+$, the Hecke operators
	\[ T_w^{(j)} = \iota_w^{-1} \Big(\big[ \GL_d(\calO_{F_w})
		\begin{pmatrix} \varpi_w 1_{j} & \\ & 1_{d-j} \end{pmatrix}
	\GL_d(\calO_{F_w}) \big] \Big) \]
for $j \in \{1,\ldots,d\}$, as well as $(T_w^{(d)})^{-1}$, act on $S_\lambda(U,A)$. 
Here $\varpi_w$ denotes a choice of uniformizer of $F_w$, and the operators $T_w^{(j)}$ do not depend in the choice of $\varpi_w$. 
We let $\bbT_\lambda^S(U,A)$ be the $A$-subalgebra of $\End_A(S_\lambda(U,A))$ generated by the $T_w^{(1)},\ldots,T_w^{(d)},(T_w^{(d)})^{-1}$, for all $w$ as above. 
If $A = \calO$, then we write $\bbT_\lambda^S(U)$ for $\bbT_\lambda^S(U,\calO)$. 

\begin{cor}\label{thm:galrepunit}
Let $L/E$ be finite extension, and $x : \bbT_\lambda^S(U)[1/p] \rightarrow L$ be an $E$-algebra morphism. 
There is a continuous absolutely semisimple representation
	\[ \rho_x : \Gal(F(S)/F) \lra \GL_d(L) \]
such that the following hold.
\begin{enumerate}
\item\label{galrepunit:pol}  
There is a perfect symmetric paring $\langle \cdot, \cdot \rangle$ on $L^d$ such that for any $a,b\in L^d$ and $\sigma\in G_F$,
	\[ \langle \rho_x(\sigma) a, \rho_x(c\sigma c) b \rangle = \epsilon^{1-d}(\sigma)\langle a, b \rangle. \]
\item\label{galrepunit:trace} For any finite place $w$ in $F$ split over $v \notin S$ in $F^+$, the characteristic polynomial of $\rho_x(\Frob_w)$ is 
	\[ X^d + \cdots  (-1)^j\Nm(w)^{\frac{j(j-1)}{2}} x(T_w^{(j)}) X^{d-j} + \cdots + (-1)^d\Nm(w)^{\frac{d(d-1)}{2}} 
	x(T_w^{(d)}). \]
\item\label{galrepunit:pss} For all $w|p$ in $F$, $\rho_x|_{G_w}$ is potentially semistable, and is semistable if $\iota_w(U_v)$ contains the Iwahori subgroup, where $v$ denotes the place of $F^+$ below $w$. 
For any $\tau \in \widetilde{J}_p$, 
	\[ \HT_\tau(\rho_x|_{G_w}) = \{ \lambda_{\tau,j} + d - j\}_{j = 1,\ldots,d}. \]
In particular, $\rho_x|_{G_w}$ has regular $p$-adic Hodge type.
\end{enumerate}
%
\end{cor}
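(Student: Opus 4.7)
The plan is to reduce the statement to \ref{thm:autgalrep} by exhibiting $x$ as coming from a regular algebraic, polarized, cuspidal automorphic representation of $\GL_d(\A_F)$ (or an isobaric sum of such) via base change from $G$. After possibly enlarging $L$, I extend $\iota$ to an embedding $L \hookrightarrow \C$ and view $x$ as a $\C$-valued Hecke eigensystem. By part~\ref{unitaryaut:adm} of \ref{thm:unitaryaut}, the Hecke algebra acts semisimply on $S_\lambda(U,\C) = S_\lambda(U,\calO)\otimes_{\calO,\iota}\C$, so the $x$-eigenspace is nonzero and sits inside the $U$-invariants of some irreducible subrepresentation $\pi = \otimes_v \pi_v$ of $S_\lambda(\{1\},\C)$. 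For each split $v = ww^c \notin S$, the Satake parameters of $\pi_v$ are determined by $\{x(T_w^{(j)})\}_j$.

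Applying the base change result of Labesse that underlies part~\ref{unitaryaut:BC} of \ref{thm:unitaryaut}, I obtain a regular algebraic, conjugate self-dual automorphic representation $\Pi$ of $\GL_d(\A_F)$ of weight $\iota\lambda$ with $\Pi_w \cong \pi_v \circ \iota_w^{-1}$ at all split $v \notin S$. Writing $\Pi = \Pi_1 \boxplus \cdots \boxplus \Pi_r$ as an isobaric sum of cuspidals, the conjugate self-duality of $\Pi$ induces an involution on the set of $\Pi_i$ by $\Pi' \mapsto \Pi'^{\vee,c}$: each $\Pi_i$ is therefore either polarizable on its own, or paired with a distinct $\Pi_j$ with $\Pi_i^{\vee,c} \cong \Pi_j$. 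In either case \ref{thm:autgalrep} supplies a Galois representation for each polarizable piece (applied either to $\Pi_i$ individually or to the conjugate-dual package $\Pi_i \boxplus \Pi_i^{\vee,c}$), and summing these yields $\rho_x$, valued in $\GL_d$ over a finite extension of $L$.

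I now verify the three properties. Property~\ref{galrepunit:pol} follows from part~\ref{autgalrep:pol} of \ref{thm:autgalrep}: the polarization character of $\Pi$ is $\delta_{F/F^+}^d$, and $\delta_{F/F^+}\circ\Nm_{F/F^+}$ is trivial on $\A_F^\times$ by class field theory, so the multiplier of the pairing on $G_F$ is just $\epsilon^{1-d}$. Property~\ref{galrepunit:trace} is a direct Satake computation: part~\ref{autgalrep:locglob} of \ref{thm:autgalrep} identifies $\iota\WD(\rho_x|_{G_w})^{\Fss}$ with $\recT_{F_w}(\Pi_w)$, and the Satake isomorphism for $\GL_d(F_w)$ converts the eigenvalues $x(T_w^{(j)})$ into the claimed characteristic polynomial, with $\Nm(w)^{j(j-1)/2}$ arising from the combined Satake normalization and the $|\cdot|^{(1-d)/2}$ twist built into $\recT$. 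For property~\ref{galrepunit:pss}, part~\ref{autgalrep:pss} of \ref{thm:autgalrep} delivers the potential semistability and the Hodge--Tate weights $\HT_\tau(\rho_x|_{G_w}) = \{\lambda_{\tau,j} + d - j\}$ (where we use that $\tau \in \widetilde{J}_p$ corresponds to $\iota\tau \in \Hom(F,\C)$ and that $\lambda \in (\Z_+^d)_0^{\Hom(F,E)}$ encodes the weight), and regularity of the $p$-adic Hodge type is immediate from the ordering $\lambda_{\tau,1} \ge \cdots \ge \lambda_{\tau,d}$. If $\iota_w(U_v)$ contains the Iwahori subgroup, then $\pi_v$ has a nonzero Iwahori-fixed vector and hence so does $\Pi_w$; the Bernstein--Zelevinsky classification then forces $\recT_{F_w}(\Pi_w)$ to have unramified underlying Weil group representation, which via part~\ref{autgalrep:pss} of \ref{thm:autgalrep} and the standard characterization of semistability in terms of the attached Weil--Deligne representation implies $\rho_x|_{G_w}$ is semistable.

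The main obstacle in this outline, though largely routine, is the polarization bookkeeping in the non-cuspidal case: one must match up conjugate-dual partners among the $\Pi_i$ and assemble the individual pairings (together with the cross-pairings between partners) into a single perfect symmetric pairing on $L^d$, using the regularity of $\Pi_\infty$ to ensure that distinct cuspidal constituents do not yield the same Galois representation. Alternatively, one could argue that $\Pi$ must itself be cuspidal under the given hypotheses (via stability of the base change for the groups at hand), but the assembly argument above works uniformly and avoids this subtlety.
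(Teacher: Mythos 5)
Your reduction to \ref{thm:autgalrep} breaks down at the point where you decompose the base change. The weak base change $\Pi$ of $\pi$ to $\GL_d(\A_F)$ need not be an isobaric sum of cuspidal representations that pair off under $\Pi'\mapsto \Pi'^{\vee,c}$ on the nose: its constituents are in general \emph{discrete} automorphic representations, and by M\oe glin--Waldspurger a non-cuspidal discrete constituent is a Speh representation, i.e.\ an isobaric sum $\sigma\lvert\det\rvert^{(b-1)/2}\boxplus\cdots\boxplus\sigma\lvert\det\rvert^{(1-b)/2}$ attached to a cuspidal $\sigma$ of a smaller $\GL_a$ with $ab$ the block size. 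Such constituents genuinely occur (nothing in the hypotheses of the corollary rules them out --- residual irreducibility of $\rbar_\frakm$ is only assumed \emph{later}, in \ref{thm:galrephecke}, so your alternative ``$\Pi$ must be cuspidal'' is unavailable), and for them your bookkeeping fails in two ways. First, the individual cuspidal constituents carry nontrivial $\lvert\det\rvert$-twists, so they are not regular algebraic \emph{polarized} cuspidal representations in the normalization of \ref{thm:autgalrep}, and the theorem cannot be applied to them; second, applying \ref{thm:autgalrep} ``to the conjugate-dual package $\Pi_i\boxplus\Pi_i^{\vee,c}$'' is not licensed either, since that package is not cuspidal and the theorem is stated only for cuspidal representations. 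This is exactly the subtlety the paper's proof outsources to Guerberoff's Theorem~2.3 (\cite{GuerberoffModLift}), which produces $\rho_x=\bigoplus_i \varrho_i\otimes\eta_i\otimes(1\oplus\epsilon\oplus\cdots\oplus\epsilon^{b_i-1})$ with each $\varrho_i$ attached to a conjugate self-dual cuspidal representation of $\GL_{a_i}(\A_F)$; the cyclotomic tensor factors $(1\oplus\epsilon\oplus\cdots\oplus\epsilon^{b_i-1})$ are precisely the Galois-side reflection of the Speh structure that your decomposition omits.

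The same omission undermines your ``assembly'' of the symmetric pairing in part~\ref{galrepunit:pol}: the paper has to build it block by block, using the Bella\"{i}che--Chenevier sign theorem to get a \emph{symmetric} pairing with multiplier $\epsilon^{1-a_i}$ on each $\varrho_i$, then twisting by $\eta_i$ and tensoring with the obvious symmetric pairing on $1\oplus\cdots\oplus\epsilon^{b_i-1}$ to land on multiplier $\epsilon^{1-d}$; your cross-pairing of ``partners'' neither produces these blocks nor verifies symmetry and the multiplier in the non-cuspidal case. Your verifications of parts~\ref{galrepunit:trace} and~\ref{galrepunit:pss} are fine in spirit (Satake unwinding, local--global compatibility at $p$, and the Iwahori-fixed-vector argument for semistability), but as written they are only justified for the cuspidal pieces you are allowed to feed into \ref{thm:autgalrep}; to make the argument complete you would either have to prove the Speh-case statements yourself or, as the paper does, simply quote Guerberoff's theorem, which already contains the construction of $r_p(\pi)$ together with these local properties, leaving only the descent of coefficients to $L$ (Chebotarev) and the symmetry of the pairing to be checked.
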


\begin{proof}
The finite flat $\calO$-algebra $\bbT_\lambda^S(U)$ acts faithfully on 
	\[ S_\lambda(U)\otimes_{\calO} \Qbar_p \cong S_\lambda(U,\Qbar_p) = \bigoplus_{\pi} \pi^U, \]
with the direct sum taken over irreducible subrepresentations of $S_\lambda(\{1\},\Qbar_p)$, 
and $\bbT_\lambda(U)$ acts on each $\pi^U$ by some $E$-algebra morphism $x_\pi : \bbT_\lambda(U)[1/p] \rightarrow \Qbar_p$. 
Thus, $x \otimes_L \Qbar_p = x_\pi$ for some irreducible subrepresentation $\pi$ of $S_\lambda(\{1\},\Qbar_p)$. 
We now apply \cite{GuerberoffModLift}*{Theorem~2.3}, letting $\rho_x$ be the representation denoted $r_p(\pi)$ in \cite{GuerberoffModLift}*{Theorem~2.3}, noting that:
\begin{itemize}
	\item We may assume that $\rho_x$ takes values in $L$ by Chebotarev density, since $x_\pi(T_w^{(j)}) = x(T_w^{(j)}) \in L$ for all $w$ of $F$ split over $v\notin S$ in $F^+$ and $j \in \{1,\ldots,d\}$.
	\item We may assume that the pairing is symmetric by applying \cite{BellaicheChenevierSign}*{Theorem~1.2} to the construction in the proof of \cite{GuerberoffModLift}*{Theorem~2.3}. More specifically, the construction in \cite{GuerberoffModLift}*{Theorem~2.3} shows there is a partition $d = d_1+\cdots+d_r$ and factorizations $d_i = a_ib_i$, such that
		\[ \rho_x = \bigoplus_{i=1}^r \varrho_i \otimes \eta_i \otimes(1\oplus \epsilon \oplus \cdots \oplus \epsilon^{b_i-1}),\]
	where $\varrho_i$ is the Galois representation associated to a regular algebraic conjugate self-dual cuspidal automorphic representation of $\GL_{a_i}(\A_F)$, and $\eta$ is a character satisfying $\eta\eta^c = \epsilon^{1 + a_i - b_i - d}$. 
	There is a symmetric perfect pairing for $\varrho_i$ with multiplier $\epsilon^{1-a_i}$ by part~\ref{autgalrep:pol} of \ref{thm:autgalrep} (which uses \cite{BellaicheChenevierSign}*{Theorem~1.2}). 
	This defines a symmetric perfect pairing for $\varrho_i\otimes\eta$ with multiplier $\epsilon^{2 - b_i - d}$. 
	Defining the obvious symmetric pairing on $(1\oplus \cdots \oplus \epsilon^{b_i-1})$ with multiplier $\epsilon^{b_i-1}$, tensoring these two, and taking the direct sum over $i$ yields a symmetric perfect pairing for $\rho_x$ with multiplier $\epsilon^{1-d}$.\qedhere
\end{itemize}
\end{proof}

%

Recall the group scheme $\calG_d$ and its canonical character $\nu : \calG_d \rightarrow \GL_1$ of \S\ref{sec:global}. 
The following is a standard application of \ref{thm:galrepunit} and \ref{thm:Gdhoms}. 

\begin{lem}\label{thm:residrep}
Let $\frakm$ be a maximal ideal of $\bbT_\lambda^S(U)$. 
There is a continuous homomorphism 
	\[ \rbar_{\frakm} : \Gal(F(S)/F^+) \lra \calG_d(\bbT_\lambda^S(U)/\frakm) \]
with $\rbar_{\frakm}^{-1}(\calG_d^0(\bbT_\lambda^S(U)/\frakm)) = \Gal(F(S)/F)$, satisfying the following:
\begin{enumerate}
	\item If $w$ is a finite place of $F$ split over $v \notin S$ of $F^+$, then $\rbar_{\frakm}(\Frob_w)$ has characteristic polynomial
		\[ X^d + \cdots (-1)^j\Nm(w)^{\frac{j(j-1)}{2}} T_w^{(j)} X^{d-j} + \cdots + (-1)^d\Nm(w)^{\frac{d(d-1)}{2}} T_w^{(d)} 
		\bmod \frakm. \]
	\item $\nu\circ \rbar_{\frakm} = \epsilon^{1-d}\delta_{F/F^+}^d \bmod {\frakm_\calO}$.
\end{enumerate}
\end{lem}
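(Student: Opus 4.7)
The argument follows the standard construction of residual Galois representations attached to Hecke algebras on definite unitary groups; see \cite{CHT}*{Proposition~3.4.4} for the analogous statement. The plan is to first construct $\rhobar_\frakm := \rbar_\frakm\rest_{G_F} : \Gal(F(S)/F) \rightarrow \GL_d(\F)$ as a semisimple representation via pseudo-characters, and then extend to $\calG_d(\F)$ by invoking \ref{thm:Gdhoms}.

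Since $\bbT := \bbT_\lambda^S(U)$ acts faithfully on the $\calO$-finite module $S_\lambda(U,\calO)$, it is itself finite over $\calO$. Localizing at $\frakm$ yields a complete local Noetherian $\calO$-algebra $R := \bbT_\frakm$ with residue field $\F := \bbT/\frakm$, and $R[1/p] \cong \prod_i L_i$ is a finite product of finite extensions of $E$. Each projection gives an $E$-algebra morphism $x_i : \bbT[1/p] \rightarrow L_i$, and by \ref{thm:galrepunit} a continuous semisimple representation $\rho_{x_i} : G_F \rightarrow \GL_d(L_i)$. Assembling gives $\rho : G_F \rightarrow \GL_d(R[1/p])$, and by part~\ref{galrepunit:trace} of \ref{thm:galrepunit} together with Chebotarev density, the characteristic polynomial of $\rho(\Frob_w)$ at each $w$ above a split $v\notin S$ has coefficients in $R$. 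This defines a $d$-dimensional pseudo-character of $G_F$ over $R$ whose reduction modulo $\frakm$ is a $d$-dimensional pseudo-character over $\F$. By Taylor's theorem on pseudo-characters (equivalently, Chenevier's theory of determinants), this residual pseudo-character is the trace of a unique semisimple representation $\rhobar_\frakm : G_F \rightarrow \GL_d(\F)$, which satisfies the characteristic polynomial identity of the lemma by construction.

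To extend $\rhobar_\frakm$ to $\calG_d(\F)$ with multiplier $\bar\mu := \epsilon^{1-d}\delta_{F/F^+}^d \bmod \frakm_\calO$, I would apply \ref{thm:Gdhoms} with $\Gamma = \Gal(F(S)/F^+)$, $\Delta = \Gal(F(S)/F)$, and $\gamma_0 = c$. The required compatibility $\bar\mu(c) = (-1)^{1-d}(-1)^d = -1$ is precisely the sign condition forcing the relevant pairing to be symmetric. By part~\ref{galrepunit:pol} of \ref{thm:galrepunit}, each $\rho_{x_i}$ admits a perfect symmetric pairing with multiplier $\epsilon^{1-d}$, and these assemble into such a pairing over $R[1/p]$. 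After choosing a $G_F$-stable $R$-lattice and rescaling, the pairing is integral and perfect, and reducing modulo $\frakm$ followed by passing to the semisimplification $\rhobar_\frakm$ yields the required perfect symmetric pairing on $\F^d$ with multiplier $\bar\epsilon^{1-d}$; the passage through semisimplification is handled by the decomposition argument of \cite{CHT}*{Lemma~2.1.10}, where the sign $\bar\mu(c) = -1$ ensures that each self-paired absolutely irreducible constituent of $\rhobar_\frakm$ admits a symmetric (rather than alternating) pairing. Applying \ref{thm:Gdhoms} then produces $\rbar_\frakm$ with $\nu\circ\rbar_\frakm = \bar\mu$ and $\rbar_\frakm^{-1}(\calG_d^0(\F)) = \Gal(F(S)/F)$ by construction.

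The main technical obstacle is the descent of the symmetric pairing from characteristic zero through the semisimplification of the mod-$\frakm$ reduction, especially in the case where $\rhobar_\frakm$ is reducible; everything else is a routine assembly of results already established in the excerpt.
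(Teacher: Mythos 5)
Your overall strategy is the one the paper intends (it dismisses this lemma as ``a standard application of \ref{thm:galrepunit} and \ref{thm:Gdhoms}''), but the step where you produce the residual pairing has genuine gaps. First, ``after choosing a $G_F$-stable $R$-lattice and rescaling, the pairing is integral and perfect'' is not justified: rescaling only makes the pairing integral with nonzero reduction, while perfectness of the reduced pairing is equivalent to the existence of a stable self-dual lattice, which is exactly the point that needs an argument. The clean route is to fix a single point $x$ with $\ker(x)\subseteq\frakm$ (lattices over $R=\bbT_\lambda^S(U)_{\frakm}$, which is not a discrete valuation ring, are awkward and unnecessary), use part~\ref{galrepunit:pol} of \ref{thm:galrepunit} together with \ref{thm:Gdhoms} already in characteristic zero to form $r_x:\Gal(F(S)/F^+)\to\calG_d(L)$ with $\nu\circ r_x=\epsilon^{1-d}\delta_{F/F^+}^d$, conjugate it into $\calG_d(\calO_L)$ by \cite{CHT}*{Lemma~2.1.5}, and then reduce the whole $\calG_d$-valued representation. (When $\rhobar_{\frakm}$ is absolutely irreducible one can instead argue that the radical of the nonzero reduced pairing is a proper subrepresentation, hence zero; but the lemma is stated without that hypothesis, and the reducible case is where the content lies.)

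Second, your sign step is incorrect as stated: $\bar\mu(c)=-1$ does not force each self-paired absolutely irreducible constituent of $\rhobar_{\frakm}$ to carry a symmetric pairing. Whether an essentially conjugate self-dual irreducible representation is of orthogonal or symplectic type is a genuine invariant, not determined by the parity of the multiplier --- this is precisely why part~\ref{galrepunit:pol} of \ref{thm:galrepunit} rests on the Bella\"{i}che--Chenevier sign theorem in characteristic zero --- and residual constituents can be of symplectic type. What the argument actually needs is weaker: once the reduction carries a perfect symmetric pairing with multiplier $\bar\mu$ (first paragraph), its semisimplification does too, by the usual orthogonal-complement/isotropic-filtration argument (for $W$ irreducible, $W\cap W^{\perp}$ is $0$ or $W$; in the isotropic case $W$ pairs hyperbolically with its conjugate-dual), so symplectic-type constituents occur with even multiplicity and pair off; one never needs each constituent to be orthogonal. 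Finally, the reduction naturally lives over the residue field of $\calO_L$, which may be larger than $\F=\bbT_\lambda^S(U)/\frakm$, so you must still descend the representation together with its pairing to $\F$ (Brauer--Nesbitt handles the semisimple representation over a finite field; the pairing requires a further word), a point your writeup does not address. The pseudo-character construction of $\rhobar_{\frakm}$ itself is fine, though assembling all points of $R[1/p]$ is unnecessary.
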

%

We now fix a maximal ideal $\frakm$ of $\bbT_\lambda^S(U)$, and let 
	\[ \rbar_\frakm : \Gal(F(S)/F^+) \lra \calG_d(\bbT_\lambda^S(U)/\frakm) \]
be as in \ref{thm:residrep}. 
Enlarging $E$ if necessary, we may assume that $\bbT_\lambda^S(U)/\frakm = \F$. 
Arguing exactly as in the proof of \cite{CHT}*{Proposition~3.4.4}, we have the following lemma. 

\begin{lem}\label{thm:galrephecke}
Assume that $\rbar_{\frakm}\rest_{G_F}$ is absolutely irreducible. 
Then there is a continuous lift 
	\[ r_{\frakm} : \Gal(F(S)/F^+) \lra \calG_d(\bbT_\lambda^S(U)_{\frakm}) \]
of $\rbar_\frakm$, unique up to conjugacy, satisfying the following:
\begin{enumerate}
	\item\label{galrephecke:frob} If $w$ is a finite place of $F$ split over $v \notin S$ of $F^+$, then $\rbar_{\frakm}(\Frob_w)$ has characteristic polynomial
		\[ X^d + \cdots (-1)^j\Nm(w)^{\frac{j(j-1)}{2}} T_w^{(j)} X^{d-j} + \cdots + (-1)^d\Nm(w)^{\frac{d(d-1)}{2}} T_w^{(d)}. \]
	\item $\nu\circ \rbar_{\frakm} = \epsilon^{1-d}\delta_{F/F^+}^d$.
\end{enumerate}
\end{lem}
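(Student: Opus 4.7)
The plan is to follow the standard recipe for realizing Galois representations inside Hecke algebras: construct a pseudorepresentation valued in $\bbT_\lambda^S(U)_\frakm$, promote it to a genuine $\GL_d$-valued representation of $G_F$ using absolute irreducibility, and then use the polarization data to extend to a $\calG_d$-valued representation of $G_{F^+}$.

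First I would observe that $\bbT_\lambda^S(U)_\frakm$ is a complete Noetherian local $\calO$-algebra, finite flat over $\calO$, so that $\bbT_\lambda^S(U)_\frakm[1/p]$ decomposes as a finite product $\prod_x L_x$ with $L_x$ a finite extension of $E$, indexed by the closed points $x$ of $\Spec\bbT_\lambda^S(U)_\frakm[1/p]$. Corollary~\ref{thm:galrepunit} attaches to each such $x$ a continuous absolutely semisimple representation $\rho_x : \Gal(F(S)/F) \rightarrow \GL_d(L_x)$ whose characteristic polynomial at Frobenius at any finite place $w$ of $F$ split over some $v\notin S$ of $F^+$ is the specialization at $x$ of the polynomial displayed in \ref{galrephecke:frob}; moreover each $\rho_x$ carries a perfect symmetric pairing with multiplier $\epsilon^{1-d}$. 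Setting $T(\sigma):=(\tr\rho_x(\sigma))_x$ defines a candidate pseudorepresentation on $\Gal(F(S)/F)$ with values in $\bbT_\lambda^S(U)_\frakm[1/p]$, and by Chebotarev density combined with part~\ref{galrepunit:trace} of \ref{thm:galrepunit}, the values of $T$ on Frobenii at split unramified places are polynomials in the Hecke operators, hence in $\bbT_\lambda^S(U)_\frakm$; continuity propagates this to all $\sigma$. The reduction of $T$ modulo $\frakm$ is the pseudocharacter of $\rbar_\frakm\rest_{G_F}$.

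Next, the assumption that $\rbar_\frakm\rest_{G_F}$ is absolutely irreducible together with a standard pseudorepresentation lifting theorem (Nyssen--Rouquier, or Chenevier's formalism of determinants) produces a continuous representation $r_\frakm^0 : \Gal(F(S)/F) \rightarrow \GL_d(\bbT_\lambda^S(U)_\frakm)$ with trace $T$ and reduction $\rbar_\frakm\rest_{G_F}$, unique up to $1+\M_d(\frakm\bbT_\lambda^S(U)_\frakm)$-conjugation. To extend $r_\frakm^0$ to a $\calG_d(\bbT_\lambda^S(U)_\frakm)$-valued lift of $\rbar_\frakm$, I would invoke the bijection of \ref{thm:Gdhoms}: it suffices to produce a perfect $\bbT_\lambda^S(U)_\frakm$-linear pairing on $(\bbT_\lambda^S(U)_\frakm)^d$ intertwining $r_\frakm^0$ and its $c$-conjugate with multiplier $\epsilon^{1-d}\delta_{F/F^+}^d$. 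The two representations $r_\frakm^0\circ\mathrm{ad}(c)$ and $(r_\frakm^0)^\vee\otimes\epsilon^{1-d}\delta_{F/F^+}^d$ have the same trace (this can be checked pointwise at each $x$ using part~\ref{galrepunit:pol} of \ref{thm:galrepunit}), so by absolute irreducibility of the residual representation they are conjugate by a matrix unique up to scalar; the normalization is fixed by demanding that the pairing reduce modulo $\frakm$ to the pairing underlying the $\calG_d$-valued homomorphism $\rbar_\frakm$ from \ref{thm:residrep}. Symmetry of the pairing likewise descends from the point-by-point symmetry, using that $1+\M_d(\frakm\bbT_\lambda^S(U)_\frakm)$-conjugacy classes are rigid. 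Applying \ref{thm:Gdhoms} to the triple $(r_\frakm^0,\epsilon^{1-d}\delta_{F/F^+}^d,\pairing)$ yields the desired $r_\frakm$, and uniqueness up to conjugacy follows from that of $r_\frakm^0$ together with the uniqueness in \ref{thm:Gdhoms}.

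The main obstacle is this last step, namely passing the polarization from the characteristic zero points $x$ to the level of $\bbT_\lambda^S(U)_\frakm$ itself. The pointwise pairings are only unique up to scalars, and they must be rescaled coherently across the components of $\Spec\bbT_\lambda^S(U)_\frakm[1/p]$ so as to integrate to a pairing over $\bbT_\lambda^S(U)_\frakm$ whose reduction recovers the one already built into $\rbar_\frakm$. This rigidification is precisely what the absolute irreducibility of $\rbar_\frakm\rest_{G_F}$ provides, and it is the key input that makes the argument of \cite{CHT}*{Proposition~3.4.4} carry over verbatim to our slightly more general setting (where $S$ need not split entirely in $F/F^+$).
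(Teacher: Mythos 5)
Your proposal is correct and is essentially the paper's own proof: the paper simply argues "exactly as in \cite{CHT}*{Proposition~3.4.4}", whose argument is precisely your route — assemble the traces of the $\rho_x$ from \ref{thm:galrepunit} into a $\bbT_\lambda^S(U)_{\frakm}$-valued pseudorepresentation, lift it to a genuine $\GL_d$-valued representation via Carayol/Nyssen--Rouquier using absolute irreducibility of $\rbar_{\frakm}\rest_{G_F}$, and extend to $\calG_d$ via the polarization and \ref{thm:Gdhoms} (CHT's Lemma~2.1.12), with the same rigidity giving uniqueness. The only point worth recording explicitly is the reducedness of $\bbT_\lambda^S(U)_{\frakm}$ (so that $\bbT_\lambda^S(U)_{\frakm}[1/p]$ really is a product of fields and traces are detected at the closed points), a standard fact which the paper uses in the proof of \ref{thm:heckedef} via the faithful semisimple action on $S_\lambda(U)[1/p]$.
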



For the remainder of this subsection we assume that $\rbar_{\frakm}\rest_{G_F}$ is absolutely irreducible. 
Fix an $E$-algebra morphism $x : \bbT_\lambda^S(U)_{\frakm}[1/p] \rightarrow E$. 
The representation $\rho_x$ of \ref{thm:galrepunit} is \mbox{de Rham}, and for each $w|p$, we let $\mathbf{v}_w : = D_{\dR}(\rho_x|_{G_w})$ be its $p$-adic Hodge type. 

Let $\tildeS$ be a finite set of places in $F$ each of which is split over some $v \in S$, containing $\widetilde{S}_p$, and such that $\tildeS \cap \tildeS^c = \emptyset$. 
We consider the global deformation datum
	\[\calS = (F/F^+,S,\tildeS,\calO,\rbar,\epsilon^{1-d}\delta_{F/F^+}^d, 
	\{R_{\tilde{v}}^\square(1,\mathbf{v}_{\tilde{v}})\}_{\tilde{v} \in \widetilde{S}_p} \cup 
	\{R_w^\square\}_{w \in \tildeS\smallsetminus \widetilde{S}_p}), \]
and let $R_{\calS}$ be the universal type $\calS$ deformation ring (see~\ref{thm:globalrep}). 

\begin{lem}\label{thm:heckedef}
Assume that $\iota_{\tilde{v}}(U_v)$ contains the Iwahori subgroup for each $v \in S_p$. 
\begin{enumerate}
\item\label{heckedef:typeS} The lift 
	\[ r_{\frakm} : \Gal(F(S)/F^+) \lra \calG_d(\bbT_\lambda^S(U)_{\frakm}) \]
of $\rbar$ from \ref{thm:galrephecke} is of type $\calS$, and the induced $\CNL_{\calO}$-morphism $R_{\calS} \rightarrow \T_\lambda^S(U)_{\frakm}$ is surjective.
\item\label{heckedef:free} Denote again by $x$ the composite $R_{\calS}[1/p] \rightarrow \bbT_\lambda^S(U)_{\frakm}[1/p] \xrightarrow{x} E$. 
The localization and completion $(R_{\calS})_x^\wedge$ acts faithfully on $S_\lambda(U)_x^\wedge$ if and only if $(R_{\calS})_x^\wedge = E$.
\end{enumerate}
\end{lem}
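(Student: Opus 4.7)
For part~\ref{heckedef:typeS}, the strategy is to verify each defining condition for ``type $\calS$'' by reducing to the classical points that cover $\bbT_\lambda^S(U)_\frakm$. By part~\ref{unitaryaut:adm} of \ref{thm:unitaryaut}, the action of $\bbT_\lambda^S(U)$ on $S_\lambda(U,\calO)$ is semisimple after inverting $p$, so $\bbT_\lambda^S(U)_\frakm[1/p]$ is a product $\prod_i k_i$ of finite field extensions of $E$, and $\bbT_\lambda^S(U)_\frakm$ embeds into this product by $\calO$-flatness. Each factor corresponds to a classical point with associated Galois representation $\rho_{x_i}$ given by \ref{thm:galrepunit}, and the projection of $r_\frakm$ to $\calG_d(k_i)$ is conjugate to $\rho_{x_i}$. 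Each $\rho_{x_i}$ is unramified outside $S$, so $r_\frakm$ factors through $\Gal(F(S)/F^+)$, and the multiplier condition $\nu\circ r_\frakm = \epsilon^{1-d}\delta_{F/F^+}^d$ is already in \ref{thm:galrephecke}. At each $\widetilde{v} \in \widetilde{S}_p$, the Iwahori hypothesis and part~\ref{galrepunit:pss} of \ref{thm:galrepunit} imply every $\rho_{x_i}|_{G_{\widetilde{v}}}$ is semistable with Hodge--Tate weights determined by $\lambda$; hence, since regular $p$-adic Hodge types are determined up to isomorphism by their Hodge--Tate weights, each $\rho_{x_i}|_{G_{\widetilde{v}}}$ has $p$-adic Hodge type $\mathbf{v}_{\widetilde{v}}$. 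Using the $\calO$-flatness of $R_{\widetilde{v}}^\square(1,\mathbf{v}_{\widetilde{v}})$ and the embedding $\bbT_\lambda^S(U)_\frakm \hookrightarrow \prod_i k_i$, the map $R_{\widetilde{v}}^\square \to \bbT_\lambda^S(U)_\frakm$ induced by $r_\frakm|_{G_{\widetilde{v}}}$ factors through $R_{\widetilde{v}}^\square(1,\mathbf{v}_{\widetilde{v}})$; at the remaining $w \in \widetilde{S}\setminus \widetilde{S}_p$, no condition is imposed since $R_w = R_w^\square$. Thus $r_\frakm$ is of type $\calS$, and the resulting map $R_\calS \to \bbT_\lambda^S(U)_\frakm$ is surjective because its image is a closed $\calO$-subalgebra containing the coefficients of the characteristic polynomials of $r_\frakm^{\univ}(\Frob_w)$ for $w$ split above $v \notin S$, hence every Hecke generator $T_w^{(j)}$ (and $(T_w^{(d)})^{-1}$ since $T_w^{(d)}$ lies outside the maximal ideal).

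For part~\ref{heckedef:free}, the action of $(R_\calS)_x^\wedge$ on $S_\lambda(U)_x^\wedge$ factors through $(\bbT_\lambda^S(U)_\frakm)_x^\wedge$. Since $\bbT_\lambda^S(U)_\frakm[1/p]$ is a product of fields and $\ker(x)$ picks out the factor $E$, one has $(\bbT_\lambda^S(U)_\frakm)_x^\wedge = E$, and $S_\lambda(U)_x^\wedge$ is a nonzero $E$-vector space (corresponding to the nonzero $U$-fixed vectors in the automorphic representation associated to $x$). Consequently $E$ acts faithfully on $S_\lambda(U)_x^\wedge$, and the action of $(R_\calS)_x^\wedge$ is faithful if and only if the surjection $(R_\calS)_x^\wedge \twoheadrightarrow (\bbT_\lambda^S(U)_\frakm)_x^\wedge = E$ obtained from part~\ref{heckedef:typeS} by passing to the localization-completion is injective, i.e., an isomorphism.

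The main subtle point is the verification in part~\ref{heckedef:typeS} that $r_\frakm|_{G_{\widetilde{v}}}$ satisfies the correct $p$-adic Hodge type condition across all of $\bbT_\lambda^S(U)_\frakm$, not just at the chosen point $x$; this uses the decomposition of the generic fiber into classical factors together with the fact that all classical representations of the given weight $\lambda$ share the same regular $p$-adic Hodge type up to isomorphism. Once part~\ref{heckedef:typeS} is in place, part~\ref{heckedef:free} is essentially formal.
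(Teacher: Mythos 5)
Your proposal is correct and follows essentially the same route as the paper: reduce the type-$\calS$ condition at places in $\widetilde{S}_p$ to the classical points via the embedding of the reduced, $\calO$-flat $\bbT_\lambda^S(U)_{\frakm}$ into a product of fields, invoke the Iwahori hypothesis and \ref{thm:galrepunit}\ref{galrepunit:pss} together with the fact that the $p$-adic Hodge type condition only sees the graded pieces determined by $\lambda$, and get surjectivity from the characteristic polynomial coefficients of Frobenii being the Hecke generators; for part~\ref{heckedef:free}, both you and the paper observe $\bbT_\lambda^S(U)_x \cong E$ and that $(R_{\calS})_x^\wedge$ acts on the nonzero finite-dimensional space $S_\lambda(U)_x^\wedge$ through its residue field. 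The only cosmetic differences are that the paper cites reducedness/semisimplicity via the hypothesis that $S$ contains all non-hyperspecial places, and your appeal to $\calO$-flatness of $R_{\tilde v}^\square(1,\mathbf{v}_{\tilde v})$ is unnecessary (flatness of $\bbT_\lambda^S(U)_{\frakm}$ suffices), but neither affects correctness.
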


\begin{proof}
The fact that $r_{\frakm}$ factors through $\Gal(F(S)/F^+)$ and has $\nu\circ r_{\frakm} = \epsilon^{1-d}\delta_{F/F^+}^d$ is contained in \ref{thm:galrephecke}. 
To show that $r_{\frakm}$ is of type $\calS$, it remains to show that for every $w\in \tildeS$, the local lift $r_{\frakm}\rest_{G_w}$ factors through the given local lifting ring $R_w$. 
For $w \in \tildeS \smallsetminus \widetilde{S}_p$, this is automatic, so we only need to show it for $\tilde{v} \in \widetilde{S}_p$. 
Since $\bbT_\lambda^S(U)_{\frakm}$ is reduced and finite flat over $\calO$, it suffices to show that for any finite extension $L/E$, and any $E$-algebra morphism $y : \bbT_\lambda^S(U)_{\frakm}[1/p] \rightarrow L$, that the representation
	\[ \rho_y : \Gal(F(S)/F) \xrightarrow{r_{\frakm}\rest_{G_F}} \GL_d(\T_\lambda^S(U)_{\frakm}) \xrightarrow{y} \GL_d(L) \]
induces an $E$-algebra morphism $R_{\tilde{v}}^\square[1/p] \rightarrow L$ that factors through $R_{\tilde{v}}^\square(1, \mathbf{v}_{\tilde{v}})[1/p]$ for every $\tilde{v} \in \widetilde{S}_p$.  
This happens if and only if for every $\tilde{v} \in \widetilde{S}_p$, the local representation $\rho_y|_{G_{\tilde{v}}}$ is semistable with $p$-adic Hodge type $\mathbf{v}_{\tilde{v}}$. 
This follows from part~\ref{galrepunit:pss} of \ref{thm:galrepunit}, since $\iota_{\tilde{v}}(U_v)$ contains the Iwahori subgroup for each $v\in S_p$, and the $p$-adic Hodge type of $\rho_y|_{G_{\tilde{v}}}$ depends only on its associated graded, which in turns depends only on $\lambda$. 
This shows that $r_{\frakm}$ is of type $\calS$. 
That the induced $\CNL_{\calO}$-algebra map is surjective follows from part~\ref{galrephecke:frob} of \ref{thm:galrephecke} and Chebotarev density.
For part \ref{heckedef:free}, first note that since $S$ contains all places at which $U$ is not hyperspecial maximal compact, the finite dimensional $E$-vector space $S_\lambda(U)[1/p]$ is a faithful and semisimple $\T_\lambda^S(U)[1/p]$-module.
Then $S_\lambda(U)_x$ is a finite dimensional $E$-vector space spanned by the eigenforms with $\T_\lambda^S(U)[1/p]$-eigensytem $x$, and $\T_\lambda^S(U)_x \cong E$. It follows that $(R_{\calS})_x^\wedge$ acts on the $E$-vector space $S_\lambda(U)_x^\wedge = S_\lambda(U)_x$ through the surjection to its residue field $E$.
\end{proof}

\section{The main theorems}\label{sec:mainsec}

Throughout this section $p$ will be an odd prime and $E$ will be a finite extension of $\Q_p$ with ring of integers $\calO$ and residue field $\F$. 
Let $F$ be a CM field with maximal totally real subfield $F^+$. 
Let $c \in G_{F^+}$ be a fixed choice of complex conjugation. 
Let $S$ be a finite set of places of $F^+$ containing all places above $p$. 


\subsection{The main theorem}\label{sec:mainthm} 
In this subsection we prove our main theorem on adjoint Selmer groups. 
Before doing so, we first recall the definition of an adequate subgroup of $\GL_d(\F)$.

Let $\Gamma$ be a subgroup of $\GL_d(\F)$, and assume that $\F$ contains all eigenvalues of all elements of $\Gamma$. 
Let $\frakgl_d = \frakgl_d(\F)$ be the Lie algebra of $\GL_d(\F)$ with the adjoint $\Gamma$-action. 
Let $\frakz \subset \frakgl_d$ be the centre of $\frakgl_d$; 
note $\frakz \cong \F$.  
The following is \cite{Thorne2adic}*{Definition~2.20} (using \cite{ThorneAdequate}*{Lemma~A.1}).

\begin{defn}\label{def:GLadequate}
We say $\Gamma$ is \emph{adequate} if the following hold:
\begin{enumerate}
\item $H^1(\Gamma,\F) = 0$ and $H^1(\Gamma,\frakgl_d/\frakz) = 0$;
\item $\End_{\F}(\F^d) = \mathrm{M}_d(\F)$ is spanned by the semisimple elements in $\Gamma$.
\end{enumerate}
\end{defn}

We note that this is slightly more general than the definition of adequate found in much of the literature (e.g. \cite{ThorneAdequate}*{Definition~2.3}). 
In particular, it allows $p|d$. 
%
%
%
%

\subsubsection{}\label{sec:CMcase} 
Let
	\[ r : \Gal(F(S)/F^+) \lra \calG_d(E) \]
be a continuous homomorphism. 
We recall that $\ad(r)$ denotes $\mathfrak{gl}_d(E)$ with the adjoint $\Gal(F(S)/F^+)$-action $\ad\circ r$. 
After conjugating, we can assume that $r$ takes values in $\calG_d(\calO)$, and we let $\rbar = r \otimes_{\calO} \F$. 
%
%

\begin{thm}\label{thm:BK}
Assume there is a finite extension $L/F$ of CM fields, a regular algebraic polarizable cuspidal automorphic representation $\Pi$ of $\GL_d(\A_{L})$, and an isomorphism $\iota : \Qbar_p \xrightarrow{\sim} \C$ such that the following hold:
	\begin{enumerate}[label=(\alph*)]
	\item\label{BK:aut} $r\rest_{G_{L}} \!\otimes\, \Qbar_p \cong \rho_{\Pi,\iota}$;
	\item\label{BK:adequate} $\zeta_p \notin L$ and $\rbar|_{G_{L(\zeta_p)}}$ has adequate image.
	\end{enumerate}
Then the following hold.
\begin{enumerate}
\item\label{BK:BK} $H_g^1(F(S)/F^+,\ad(r)) = 0$.
\item\label{BK:H2} $H^2(F(S)/F^+,\ad(r)) = 0$.
\item\label{BK:H1} The natural map
	\[ H^1(F(S)/F^+,\ad(r)) \lra \prod_{v|p} H^1(F_v^+,\ad(r)) / H_g^1(F_v^+,\ad(r)) \]
is an isomorphism.
\end{enumerate}
\end{thm}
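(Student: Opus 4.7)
The plan is to implement the patching strategy sketched in the introduction, using Propositions~\ref{thm:smallglobalcomplete}, \ref{thm:globalsmooth}, Theorem~\ref{thm:smoothlp}, and the cohomological bookkeeping of \ref{thm:cohom}. First I would use solvable base change to reduce to a convenient setting: replacing $F$ by a suitable solvable CM extension $F'$ that contains $L$, is linearly disjoint from $\overline{F}^{\ker\rbar}(\zeta_p)$ over $F$, has the property that every $v|p$ in $(F')^+$ splits in $F'$ and every place in the relevant ramification set splits in $F'/(F')^+$, and is such that $\Pi|_{\A_{F'}}$ is still cuspidal and descends along a unitary group as in \S\ref{sec:unitary}. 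Vanishing of $H_g^1$ is preserved under this replacement by restriction--corestriction, and the adequacy hypothesis~\ref{BK:adequate} is preserved by construction. Having done this, I set up the global deformation datum $\calS$ of \ref{thm:heckedef} with local conditions $R_{\tilde v}^\square(1,\mathbf{v}_{\tilde v})$ at $\tilde v|p$ (where $\mathbf{v}_{\tilde v}$ is the $p$-adic Hodge type of $r\rest_{G_{\tilde v}}$) and $R_w^\square$ elsewhere. By Proposition~\ref{thm:smallglobalcomplete}\ref{smallglobalcomplete:tangent}, the tangent space of $R_\calS[1/p]$ at the point $x$ cut out by $r$ is canonically isomorphic to $H_g^1(F(S)/F^+,\ad(r))$, so assertion~\ref{BK:BK} is equivalent to $(R_\calS)_x^\wedge = E$.

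Next I would run the Taylor--Wiles--Kisin patching machine, choosing auxiliary Taylor--Wiles primes using the adequacy hypothesis on $\rbar|_{G_{L(\zeta_p)}}$. This produces a power-series ring $R_\infty = R_\calS^{\loc}[[y_1,\ldots,y_g]]$ over the completed tensor product of local lifting rings, together with a patched module $M_\infty$ which is finite over $R_\infty$, satisfies $M_\infty/\mathfrak{a}_\infty \cong S_\lambda(U)_\frakm^\vee$ as $R_\calS$-modules for a certain regular sequence $\mathfrak{a}_\infty \subset R_\infty$, and is Cohen--Macaulay over $R_\infty$ by the usual patching depth count (Krull dimension of $R_\infty$ equals the number of patching variables plus the dimension of $\mathrm{Spec}\, R_\calS^{\loc}$, and $M_\infty$ has full support with depth at least that number).

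The main obstacle, and the whole reason for the paper, is to show that $R_\infty$ is formally smooth at the prime $x_\infty$ induced by $x$. Here local--global compatibility \ref{thm:autgalrep}\ref{autgalrep:locglob} together with \ref{thm:automorphicWD} implies that each Weil--Deligne representation $\WD(r\rest_{G_{\tilde v}})$ is generic (the local components of the cuspidal automorphic representation are generic). Theorem~\ref{thm:smoothlp} at places above $p$ and Proposition~\ref{thm:localdefl}\ref{localdefl:smooth} at the other places in $\tildeS$ therefore show that each local lifting ring $R_{\tilde v}$ is formally smooth at the local component of $x$; taking completed tensor products and adjoining the formal variables, $R_\infty$ is formally smooth at $x_\infty$. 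Auslander--Buchsbaum applied to the Cohen--Macaulay module $(M_\infty)_{x_\infty}^\wedge$ over the regular ring $(R_\infty)_{x_\infty}^\wedge$ then shows that $(M_\infty)_{x_\infty}^\wedge$ is free over $(R_\infty)_{x_\infty}^\wedge$; quotienting by $\mathfrak{a}_\infty$ and invoking \ref{thm:heckedef}\ref{heckedef:free}, we conclude $(R_\calS)_x^\wedge = E$, which proves~\ref{BK:BK}.

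Finally, parts~\ref{BK:H2} and~\ref{BK:H1} follow from~\ref{BK:BK} by \ref{thm:cohom}: the hypothesis $\ad(r)^{G_{F^+}}=0$ holds because $\rbar$ (and hence $r$) is Schur on account of the absolute irreducibility of $\rho=r\rest_{G_F}$ coming from assumption~\ref{BK:aut}; the multiplier $\mu = \epsilon^{1-d}\delta_{F/F^+}^d$ is totally odd so that $\mu(c_v) = -1$ at each infinite place; and regularity of the $p$-adic Hodge type of $r\rest_{G_w}$ for each $w|p$ is supplied by \ref{thm:autgalrep}\ref{autgalrep:pss}.
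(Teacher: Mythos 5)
Your proposal is correct and follows essentially the same route as the paper: restriction--corestriction and solvable base change reductions, identification of $H_g^1(F(S)/F^+,\ad(r))$ with the tangent space of $(R_{\calS})_x^\wedge$ via \ref{thm:smallglobalcomplete}, formal smoothness of the local rings at $x$ from genericity of the Weil--Deligne representations (\ref{thm:automorphicWD}, \ref{thm:localdefl}, \ref{thm:smoothlp}), Taylor--Wiles--Kisin patching with the depth/Auslander--Buchsbaum argument and \ref{thm:heckedef}, and \ref{thm:cohom} for parts~\ref{BK:H2} and~\ref{BK:H1}. The only points the paper spells out that you gloss over are the twist making $\Pi$ conjugate self-dual and the base change forcing Iwahori level at $p$ (so that the semistable rings $R_{\tilde v}^\square(1,\mathbf{v}_{\tilde v})$ and \ref{thm:heckedef} apply), and note that absolute irreducibility of $r\rest_{G_F}$ is supplied by the adequacy hypothesis~\ref{BK:adequate} rather than by~\ref{BK:aut}.
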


\begin{proof}
Using \ref{thm:cohom}, parts~\ref{BK:H2} and~\ref{BK:H1} are implied by \ref{BK:BK}. 
To prove $H_g^1(F(S)/F^+, \ad(r)) = 0$, we first note that we are free to enlarge $S$, so we can (and do) assume that $S$ contains at least one finite place not above $p$. 
We are also free to replace $F$ with any finite extension $L/F$ of CM fields
Indeed, let $L^+$ denote the maximal totally real subfield of $L$ and $S_{L^+}$ the set of places of $L^+$ above $S$. 
Then $F(S)L^+$ is Galois over $L^+$ and $\Gal(F(S)L^+/L^+)$ canonically isomorphic to an open subgroup $H$ of $\Gal(F(S)/F^+)$. 
The restriction on cohomology to $H$ is injective by considering restriction-corestriction, then via the isomorphism $H \cong \Gal(F(S)L^+/L^+)$ and inflation to $\Gal(L(S_{L^+})/L^+)$, we have an injection 
	\[ H^1(F(S)/F^+,\ad(r)) \lra H^1(L(S_{L^+})/L^+,\ad(r)) \]
such that the following diagram commutes
	\[ \begin{tikzcd}
	H^1(F(S)/F^+,\ad(\rho)) \arrow{r} \arrow{d} & \prod_{v|p} H^1(F_v^+,B_{\dR} \otimes_{\Q_p} \ad(r)) \arrow{d} \\
	H^1(L(S_{L^+})/L^+,\ad(r)) \arrow{r} & \prod_{w|p} H^1(L_w^+,B_{\dR} \otimes_{\Q_p} \ad(r)).
	\end{tikzcd}
	\]
Thus $H_g^1(F(S)/F^+,\ad(r))$ injects into $H_g^1(L(S_{L^+})/L^+,\ad(r))$, so the former is trivial if the latter is. 
In particular, we may assume $L = F$ in the statement of the theorem. 

Note that if $\chi : \Gal(F(S)/F) \rightarrow \calO^\times$ is any continuous character, then by \ref{thm:Gdhoms} there is a continuous homomorphism $r \otimes \chi : \Gal(F(S)/F^+) \rightarrow \calG_d(\calO)$ such that $(r \otimes \chi) \rest_{G_F} = r\rest_{G_F} \otimes\, \chi$ and $\nu \circ (r\otimes \chi) = (\chi\chi^c) (\nu\circ r)$. 
There is an isomorphism $\ad(r) \cong \ad(r\otimes \chi)$ of $\Gal(F(S)/F^+)$-modules, so using \cite{CHT}*{Lemma~4.1.4}, we can twist $r$ and $\Pi$, and assume $\Pi$ is conjugate self-dual. 

Let $S_p$ be the set of places above $p$ in $F^+$, and for each $v \in S_p$, choose some $\tilde{v}|v$ in $F$. 
Let $\widetilde{S}_p = \{\tilde{v} \mid v\in S_p \}$. 
Using cyclic base change \cite{ArthurClozel}*{Theorem~4.2 of Chapter~3}, further replacing $F$ by a solvable extension, we may assume that $F/F^+$, $r$, $\rbar$, and $\Pi$ satisfy:
	\begin{enumerate}[label=(\roman*)]
	\item $F/F^+$ is unramified at all finite places, and every $v\in S$ splits in $F$;
	\item if $d$ is even, then $d[F^+:\Q] \equiv 0 \pmod 4$;
	\item\label{lotsofass:galrepaut} $r\rest_{G_F} \otimes \,\Qbar_p\cong \rho_{\Pi,\iota}$, and $\Pi$ is conjugate self-dual;
	\item $\zeta_p \notin F$ and $\rbar(G_{F(\zeta_p)})$ is adequate;
	\item $\Pi_{\tilde{v}}$ has Iwahori fixed vectors for every $\tilde{v} \in \widetilde{S}_p$.
	\end{enumerate}
Enlarging $E$ if necessary, we assume that $E$ contains all embedding of $F$ into $\Qbar_p$, and that $\F$ contains the eigenvalues of all elements in the image of $\rbar\rest_{G_F}$. 
Let $\lambda \in (\Z_+^d)^{\Hom(F,E)}$ be such that $\iota\lambda$ is the weight of $\Pi$. 
Let $G$ be the $\calO_{F^+}$-group scheme of \ref{sec:invol}. 
Recall that for any finite place $w$ of $F$ split over $F^+$, we have an isomorphism $\iota_w : G(\calO_{F_v^+}) \xrightarrow{\sim} \GL_d(\calO_{F_w})$. 
For any $\calO$-algebra $A$, and compact subgroup $V$ of $G(\A_{F^+}^\infty)$ with $V_p \subseteq G(\calO_{F_p^+})$, we let $S_\lambda(V,A)$ be the $A$-module of automorphic forms of \ref{sec:invol}. 
Viewing $\C$ as an $\calO$-algebra via $\iota$, by \ref{thm:unitaryaut} there is an irreducible subrepresentation $\pi$ of $S_\lambda(\{1\},\C)$ such that
	\begin{enumerate}[resume,label=(\roman*)]
	\item if $v$ is a finite place of $F^+$ that splits as $v = ww^c$ in $F$, then $\pi_v \cong \Pi_w \circ\iota_w$;
	\item\label{lotsofass:basechange} if $v$ is a finite place of $F^+$ inert in $F$, and $\Pi_v$ is unramified, then $\pi_v$ has a fixed vector for some  hyperspecial maximal compact subgroup of $G(F_v^+)$.
	\end{enumerate}
Now choose an open compact subgroup $U \subseteq G(\A_{F^+}^\infty)$ such that $U_p \subseteq G(\calO_{F_p^+})$ and satisfying the following:
	\begin{enumerate}[resume,label=(\roman*)]
	\item\label{lotsofass:Iw} $\iota_{\tilde{v}}(U_v)$ is the Iwahori subgroup of $\GL_d(\calO_{F_{\tilde{v}}})$ for each $v\in S_p$;
	\item\label{lotsofass:Ufixed} $\pi^U \ne 0$;
	\item\label{lotsofass:hyper} $U_v$ is a hyperspecial maximal compact subgroup for every $v\notin S$;
	\item\label{lotsofass:small} for all $t\in G(\A_{F^+}^\infty)$, the group $t^{-1}G(F^+)t \cap U$ contains no element of order $p$.
	\end{enumerate}
Since we have assumed that $S$ contains at least one finite place not above $p$, assumptions \ref{lotsofass:Iw}, \ref{lotsofass:hyper}, and \ref{lotsofass:small} can be satisfied simultaneously by letting $U_u$ be sufficiently small for some $u \in S \smallsetminus S_p$. 
Let $\bbT_\lambda^S(U)$ be the Hecke algebra in \ref{sec:Heckeops}. 
Since $S_\lambda(U,\C) \cong S_\lambda(U)\otimes_{\calO,\iota} \C$, there is an action of $\bbT_\lambda^S(U)$ on $\pi^U$ and it acts via an $E$-algebra morphism $x : \bbT_\lambda^S(U) \rightarrow E$. 
Indeed, $\bbT_\lambda^S(U)$ acts on $\pi^U$ via a homomorphism $x : \bbT_\lambda^S(U)[1/p] \rightarrow \C$ that factors through $E$ by assumptions \ref{lotsofass:galrepaut} and \ref{lotsofass:basechange}, local-global compatibility (i.e. part~\ref{autgalrep:locglob} of~\ref{thm:autgalrep}), and that $r\rest_{G_F}$ takes values in $\GL_d(E)$. 
Moreover, the Galois representation $\rho_x : \Gal(F(S)/F^+) \rightarrow \GL_d(E)$ of \ref{thm:galrepunit} is isomorphic to $ r\rest_{G_F}$. 

Let $\frakm$ be the maximal ideal of $\bbT_\lambda^S(U)$ containing $\ker(x)$. 
We can choose a continuous homomorphism 
	\[ \rbar_{\frakm} : \Gal(F(S)/F^+) \lra \calG_d(\bbT_\lambda^S(U)/\frakm) = \calG_d(\F) \]
as in \ref{thm:residrep}, such that $\rbar_{\frakm} = \rbar$. 
In particular, $\rbar_{\frakm}\rest_{G_F}$ is absolutely irreducible. 
Then \ref{thm:galrephecke} gives a lift 
	\[ r_{\frakm} : \Gal(F(S)/F^+) \lra \calG_d(\bbT_\lambda^S(U)_{\frakm}) \]
of $\rbar_{\frakm}$, and letting $r_x$ be the composite
	\[ \Gal(F(S)/F^+) \xrightarrow{r_{\frakm}} \calG_d(\bbT_\lambda^S(U)_{\frakm}) \stackrel{x}{\lra}
	\calG_d(E),\]
the lifts $r_x$ and $r$ of $\rbar$ define the same deformation.

For each $v \in S\smallsetminus S_p$, let $\tilde{v}$ be a choice of place in $F$ dividing $v$, and let $\widetilde{S} = \{\tilde{v} \mid v \in S \}$. 
For each $\tilde{v} \in \widetilde{S}_p$, let $\mathbf{v}_{\tilde{v}}  = D_{\dR}(\rho|_{G_{\tilde{v}}})$ be the $p$-adic Hodge type of $\rho$ at $\tilde{v}$. 
We consider the global deformation datum (see~\ref{def:globaldatum})
	\[\calS = (F/F^+,S,\tilde{S},\calO,\rbar,\epsilon^{1-d}\delta_{F/F^+}^d, \{R_{\tilde{v}}\}_{\tilde{v}\in \tildeS}) \]
where
	\begin{itemize}
	\item $R_{\tilde{v}} = R_{\tilde{v}}^\square(1,\mathbf{v}_{\tilde{v}})$ if $\tilde{v} \in \tildeS_p$ (see~\ref{thm:pdefring}) and
	\item $R_{\tilde{v}} = R_{\tilde{v}}^\square$ if $\tilde{v} \in \tilde{S}\smallsetminus \widetilde{S}_p$.
	\end{itemize}
We let $R_{\calS}$ 
be the universal type $\calS$ deformation
ring (see~\ref{def:globaldef} and~\ref{thm:globalrep}), and let $R_{\calS}^{\loc} = \widehat{\otimes}_{\tilde{v} \in \tildeS} R_{\tilde{v}}$. 
By part~\ref{heckedef:typeS} of \ref{thm:heckedef}, the deformation $[r_{\frakm}]$ of $\rbar_{\frakm}$ is of type $\calS$, and there is a surjective $\CNL_{\calO}$-algebra morphism $R_{\calS} \rightarrow \bbT_\lambda(U)_{\frakm}$. 
Again denote by $x$ the $E$-algebra morphism $R_{\calS}[1/p] \rightarrow \bbT_\lambda(U)_{\frakm}[1/p] \xrightarrow{x} E$. 
Make a choice of lift for the universal type $\calS$ deformation so that the specialization $r_x$ of this lift via $x : R_{\calS}[1/p] \rightarrow E$ is equal to $r$. 
Then, by \ref{thm:smallglobalcomplete}, 
	\[ H_g^1(F(S)/F^+,\ad(r)) 
	= 0 \]
if and only if $(R_{\calS})_x^\wedge = E$, and this happens if and only if $(R_{\calS})_x^\wedge$ acts faithfully on $S_\lambda(U)_x^\wedge$ by part~\ref{heckedef:free} of~\ref{thm:heckedef}. 
 

Our choice of lift for the universal type $\calS$ deformation gives a $\CNL_{\calO}$-morphism $R_{\calS}^{\loc} \rightarrow R_{\calS}$. 
We denote again by $x$ the induced $\CNL_{\calO}$-morphisms 
	\[ x : R_{\calS}^{\loc}[1/p] \rightarrow R_{\calS}[1/p] \xrightarrow{x} E \quad \text{and} \quad
	x : R_{\tilde{v}}[1/p] \rightarrow R_{\calS}[1/p] \xrightarrow{x} E \quad \text{for each } \tilde{v} \in \tildeS.\]
Note that for each $\tilde{v} \in \tilde{S}$, the morphism $x : R_{\tilde{v}}[1/p] \rightarrow E$ corresponds to the local representation $r\rest_{G_{\tilde{v}}}$. 
Since $\Pi$ is a cuspidal representation of $\GL_d(\A_F)$, it is generic, 
and its local factors $\Pi_w$ at all finite places are generic. 
Then \ref{thm:automorphicWD} implies that $\WD(r\rest_{G_w}) \cong \iota^{-1} \rec_{F_w}^T(\Pi_w)$ is generic for all finite places $w$. 
Then, by \ref{thm:localdefl}, \ref{thm:pdefring}, and \ref{thm:smoothlp},
	\begin{itemize}
	\item if $\tilde{v} \in \tildeS \smallsetminus \tildeS_p$, then $(R_{\tilde{v}})_x^\wedge$ is formally smooth over $E$ of dimension $d^2$;
	\item if $\tilde{v}\in \tildeS_p$, then $(R_{\tilde{v}})_x^\wedge$ is formally smooth over $E$ of dimension $d^2 + \frac{d(d-1)}{2}[F_{\tilde{v}}:\Q_p]$ (here we used that the $p$-adic Hodge type of $r\rest_{G_{\tilde{v}}} = r_x\rest_{G_{\tilde{v}}}$ is regular by part~\ref{galrepunit:pss} of \ref{thm:galrepunit}).
	\end{itemize}
Then the proof of \cite{KisinFinFlat}*{Lemma~3.4.12} shows
	\begin{enumerate}[resume,label=(\roman*)]
	\item\label{ass:locsmooth} $(R_{\calS}^{\loc})_x$ is formally smooth over $E$ of dimension $d^2\abs{S} + \frac{d(d-1)}{2}[F^+:\Q]$.
	\end{enumerate}


We now use the argument of \cite{ThorneAdequate}*{Theorem~6.8}. 
There are assumptions there that the local deformation rings at places $\tilde{v} \in \widetilde{S}_p$ are crystalline deformation rings, and that $U_v = G(\calO_{F_v^+})$ for $v\in S_p$, but these assumptions are not used in what we need here. 
Since $\zeta_p \notin F$ and $\rbar|_{G_{F(\zeta_p)}}$ has adequate image, 
the proof of \cite{ThorneAdequate}*{Theorem~6.8} using \cite{Thorne2adic}*{Proposition~7.1} in place of \cite{ThorneAdequate}*{Proposition~4.4}, c.f. \cite{Thorne2adic}*{Proposition~7.2}, shows there are nonnegative integers integers $g$ and $q$, such that letting 
	\begin{itemize}
	\item $S_\infty = \calO[[Z_1,\ldots,Z_{d^2\abs{S}},Y_1,\ldots,Y_q]]$;
	\item $\fraka = (Z_1,\ldots,Z_{d^2\abs{S}},Y_1,\ldots,Y_q) \subseteq S_\infty$;
	\item $R_\infty = R_{\calS}^{\loc}[[X_1,\ldots,X_g]]$;
	\end{itemize}
there is an $R_\infty$-module $M_\infty$ with a commuting action of $S_\infty$ such that the following hold.
	\begin{enumerate}[resume,label=(\roman*)]
	\item\label{ass:Rinfty} There is a local $\calO$-algebra morphism $S_\infty \rightarrow R_\infty$ and a surjection $R_\infty \rightarrow R_{\calS}$ of $R_{\calS}^{\loc}$-algebras with $\fraka R_\infty$ in its kernel.
	\item There is a surjection $M_\infty \rightarrow S_\lambda(U)_{\frakm}$ of $R_\infty$ modules with kernel $\fraka M_\infty$, where the $R_\infty$-module structure of $S_\lambda(U)_{\frakm}$ is via the surjection $R_\infty \rightarrow R_{\calS}$ of \ref{ass:Rinfty}.
	\item $M_\infty$ is a finite free $S_\infty$-module.
	\item\label{ass:Num} $g = q - \frac{d(d-1)}{2}[F^+:\Q]$.
	\end{enumerate}
We note that \cite{ThorneAdequate}*{Theorem~6.8} does not state \ref{ass:Rinfty}, but only that the action of $S_\infty$ on $M_\infty$ (denoted $H_\infty$ there) factors through the action of $R_\infty$ on $M_\infty$. 
However, the patching patching part of the argument in \cite{ThorneAdequate}*{Theorem~6.8} is quoted (in \cite{ThorneAdequate}*{Lemma~6.10}) from \cite{BLGGSatoTate}*{Theorem~3.6.1}, where \ref{ass:Rinfty} is shown (in particular, see  \cite{BLGGSatoTate}*{Sublemma in the proof of Theorem~3.6.1}). 
Also, in \ref{ass:Num} we have used that the $\mu_{\frakm}$ in \cite{ThorneAdequate}*{Theorem~6.8} is $d$ in our case. 

Denote again by $x$ the $E$-algebra morphism 
$x : R_\infty[1/p] \rightarrow R_{\calS}[1/p] \xrightarrow{x} E$ and let $\frakp_\infty = \ker(x) \in \Spec R_\infty$. 
Note that $\fraka R_\infty \subseteq \frakp_\infty$. 
This together with the fact that $M_\infty$ is finite free over $S_\infty$ implies we can find an $M_\infty$-regular sequence of length $d^2\abs{S}+q$ in $\frakp_\infty$ and
	\[ \depth_{(R_\infty)_x^\wedge}(M_\infty)_x^\wedge = \depth_{(R_\infty)_x}(M_\infty)_x \ge
	\depth_{R_\infty}(\frakp_\infty,M_\infty) \ge d^2\abs{S} + q, \]
where the first equality follows from \cite{MatsumuraCRT}*{Theorem~23.3} and the middle inequality is \cite{MatsumuraCRT}*{Exercise~16.5}.
On the other hand, using \ref{ass:locsmooth}, \ref{ass:Rinfty}, and \ref{ass:Num} above, we deduce that $(R_\infty)_x^\wedge \cong (R_{\calS}^{\loc})_x^\wedge[[X_1,\ldots,X_g]]$ is formally smooth over $E$ of dimension
	\[ d^2\abs{S} + \frac{d(d-1)}{2}[F^+:\Q] + g = d^2\abs{S} + q. \]
Since $(R_\infty)_x^\wedge$ is regular, a theorem of Serre \cite{MatsumuraCRT}*{Theorem~19.2} and the Auslander--Buchsbaum formula \cite{MatsumuraCRT}*{Theorem~19.1} imply that $(M_\infty)_x^\wedge$ is a finite free $(R_\infty)_x^\wedge$-module. 
Then $(M_\infty)_x^\wedge/\fraka$ is a finite free $(R_\infty)_x^\wedge/\fraka$-module. 
But $(M_\infty)_x^\wedge/\fraka \cong S_\lambda(U)_x^\wedge$ and the action of $(R_\infty)_x^\wedge/\fraka$ on it factors through $(R_{\calS})_x^\wedge$. 
This shows $(R_{\calS})_x^\wedge$ acts faithfully on $S_\lambda(U)_x^\wedge$, which completes the proof of the theorem. 
\end{proof}

\subsection{The proofs of Theorems \ref{thm:BKA}, \ref{thm:thmB}, and \ref{thm:thmC}}\label{sec:introthms}
We now show how each of Theorems~\ref{thm:BKA}, \ref{thm:thmB}, and \ref{thm:thmC} from the introduction follow from \ref{thm:BK}. 

\subsubsection{Proof of Theorem \ref{thm:BKA}}\label{sec:CMThm} 
We recall the setup. 
We have a CM field $F$ with maximal totally real subfield $F^+$ and a finite set of finite places $S$ of $F$ containing all places above $p$. 
We fix a choice of complex conjugation $c \in G_{F^+}$. 
We are given a continuous absolutely irreducible representation
\[ \rho : \Gal(F(S)/F) \lra \GL_d(E). \]
We assume there is a continuous totally odd character $\mu : G_{F^+} \rightarrow E^\times$ and an invertible symmetric matrix $P$ such that the pairing $\langle a, b \rangle = {}^t a P^{-1} b$ on $E^d$ is perfect and satisfies 
	\[ \langle \rho(\sigma) a, \rho(c\sigma c) b \rangle = \mu(\sigma) \langle a, b \rangle. \]
Since $\rho$ is absolutely irreducible, $P$ is unique up to scalar. 
The adjoint representation of $\Gal(F(S)/F)$ on $\ad(\rho) = \frakgl_d(E)$ extends to an action of $\Gal(F(S)/F^+)$ by letting $c$ act by $X \mapsto -P\transp X P^{-1}$, and this is independent of the choice of $c$ and of $P$. 

By choosing a $\Gal(F(S)/F)$-stable $\calO$-lattice in $V_{\rho}$, we may assume that $\rho$ takes values in $\GL_d(\calO)$. 
The semisimplification of its reduction modulo the maximal ideal of $\calO$ does not depend on the choice of lattice, and we denote it by $\rhobar : \Gal(F(S)/F) \rightarrow \GL_d(\F)$.
Assuming there is a finite extension $L/F$ of CM fields, a regular algebraic polarizable cuspidal automorphic representation $\Pi$ of $\GL_d(\A_{L})$, and an isomorphism $\iota : \Qbar_p \xrightarrow{\sim} \C$ such that:
	\begin{enumerate}[label=(\alph*)]
	\item\label{BKA:autagain} $\rho|_{G_{L}} \otimes \Qbar_p \cong \rho_{\Pi,\iota}$;
	\item\label{BKA:adequateagain} $\zeta_p \notin L$ and $\rhobar(G_{L(\zeta_p)})$ is adequate;
	\end{enumerate}
we want to show $H_g^1(F(S)/F^+,\ad(\rho)) = 0$.

By \ref{thm:Gdhoms}, we can define a continuous homomorphism $r : \Gal(F(S)/F^+) \rightarrow \calG_d(E)$ such that $r\rest_{G_F} = \rho$ and $\nu\circ r = \mu$, and there is an isomorphism $\ad(r) \cong \ad(\rho)$ of $\Gal(F(S)/F^+)$-representations. 
We may also assume $r$ takes values in $\calG_d(\calO)$ and letting $\rbar = r \otimes_{\calO} \F$, that $\rbar \rest_{G_F} = \rhobar$. 
Theorem~\ref{thm:BKA} then follows from \ref{thm:BK}. 
\hfill \qed

\subsubsection{Proof of Theorem \ref{thm:thmB}}\label{sec:RealThm}
We have a totally real field $F^+$ and a finite set of finite places $S$ of $F^+$ containing all places above $p$.
Let $F^+(S)$ be the maximal extension of $F^+$ unramified outside $S$ and all places above $\infty$.  
Let
	\[ \rho : \Gal(F^+(S)/F^+) \lra \GL_d(E) \]
be a continuous, absolutely irreducible representation, and let $V_\rho$ denote the representation space of $\rho$. 
We assume that $\rho$ satisfies one of the following:
\begin{enumerate}
	\item[(GO)]\label{Bagain:orthogonal} $\rho$ factors through a map $\Gal(F^+(S)/F^+) \rightarrow \GO_d(E)$, that we again denote by $\rho$, with totally even multiplier character $\mu$;
	\item[(GSp)]\label{Bagain:symplectic} $d$ is even and $\rho$ factors through a map $\Gal(F^+(S)/F^+) \rightarrow \GSp_d(E)$, that we again denote by $\rho$, with totally odd multiplier character $\mu$.
\end{enumerate}
We will refer to the first as the GO-case, and the second as the GSp-case. 

If we are in the GO-case, then we let $\ad(\rho)$ and $\ad^0(\rho)$ denote the Lie algebra $\mathfrak{go}_d(E)$ of $\GO_d(E)$ and sub-Lie algebra $\mathfrak{so}_d(E)$, respectively, with the adjoint action $\ad\circ\rho$ of $\Gal(F^+(S)/F^+)$. 
If we are in the GSp-case, then we let $\ad(\rho)$  and $\ad^0(\rho)$ denote the Lie algebra $\mathfrak{gsp}_d(E)$ of $\GSp_d(E)$ and sub-Lie algebra $\mathfrak{sp}_d(E)$, respectively, with the adjoint action $\ad\circ \rho$ of $\Gal(F^+(S)/F^+)$.

There is a splitting $\mathfrak{go}_d(E) = \mathfrak{so}_d(E) \oplus E$, resp. $\mathfrak{gsp}_d(E) = \mathfrak{sp}_d(E) \oplus E$, with $E$ the Lie algebra of the centre of $\GO_d(E)$, resp. $\GSp_d(E)$, that is stable under the adjoint action. 
We thus get a $\Gal(F^+(S)/F^+)$-equivariant splitting $\ad(\rho) = \ad^0(\rho) \oplus E$, where $E$ has the trivial $\Gal(F^+(S)/F^+)$-action. 
This gives a decomposition 
	\[ H_g^1(F^+(S)/F^+,\ad(\rho)) = H_g^1(F^+(S)/F^+,\ad^0(\rho)) \oplus H_g^1(F^+(S)/F^+,E). \]
For each $v|p$ in $F^+$, the local group $H_g^1(F_v^+,E) = \ker(H^1(F_v^+,E) \rightarrow H^1(F_v^+,B_{\dR}\otimes_{\Q_p}E))$ is the one dimensional $E$-subspace of $\Hom(G_v,E)$ corresponding to the unramified extension \cite{BlochKato}*{Example~3.9}. 
By class field theory, $H_g^1(F^+(S)/F^+,E) = 0$. 
So, we want to show
\begin{enumerate}
\item\label{thing:H1g} $H_g^1(F^+(S)/F^+,\ad^0(\rho)) = 0$;
\item\label{thing:H2} $H^2(F^+(S)/F^+,\ad^0(\rho)) = 0$;
\item\label{thing:H1} for each $v|p$ in $F^+$, the following natural map is an isomorphism
	\[ H^1(F^+(S)/F^+,\ad^0(\rho)) \lra \prod_{v|p} H^1(F_v^+,\ad^0(\rho))/H_g^1(F_v^+,\ad^0(\rho)). \]
\end{enumerate}
The adjoint action of $G_{F^+}$ on $\mathfrak{gl}_d(E) \cong \Hom_E(V_{\rho},V_{\rho})$ decomposes as
	\begin{equation}\label{eqn:gldecomp} 
	\mathfrak{gl}_d(E) \cong \Hom_E(V_{\rho},V_{\rho}) \cong V_{\rho} \otimes V_{\rho}^\vee 
	\cong V_{\rho} \otimes V_{\rho} \otimes \mu^{-1} \cong \Sym^2 V_{\rho}\otimes \mu^{-1} \oplus \textstyle{\bigwedge^2} V_{\rho} \otimes \mu^{-1}.
	\end{equation}
Under this decomposition, $\ad^0(\rho)$ is identified with $\textstyle{\bigwedge^2} V_{\rho}\otimes \mu^{-1}$ in the GO-case, and with $\Sym^2 V_{\rho}\otimes \mu^{-1}$ in the GSp-case.
Theorem~\ref{thm:thmB} then follows from Theorem~\ref{thm:symext} below, noting that we may choose $F$ in the statement of Theorem~\ref{thm:symext} to be any quadratic CM extension of $F^+$ not contained in the subfield of $\overline{F}$ fixed by $\rhobar|_{G_{L^+(\zeta_p)}}$.

\begin{thm}\label{thm:symext}
Let $F/F^+$ be a quadratic CM extension, and let $L^+/F^+$ be a finite totally real extension. 
Assume there is a regular algebraic polarizable cuspidal automorphic representation $\pi$ of $\GL_d(\A_{L^+})$, and an isomorphism $\iota : \Qbar_p \xrightarrow{\sim}\C$ such that
	\begin{enumerate}[label=(\alph*)]
	\item\label{symextass:aut} $\rho|_{G_{L^+}}\otimes \Qbar_p \cong \rho_{\pi,\iota}$;
	\item\label{symextass:ad} $\zeta_p \notin L := FL^+$ and $\rhobar(G_{L(\zeta_p)})$ is adequate;
	\end{enumerate} 
If we are in the GO-case, let 
	\[ W: = \Sym^2 V_\rho \otimes \mu^{-1}\delta_{F/F^+} \quad \text{or} \quad W:=\textstyle{\bigwedge^2} V_{\rho} \otimes \mu^{-1}.\]
If we are in the GSp-case, let 
	\[ W:=\Sym^2 V_\rho \otimes \mu^{-1} \quad \text{or} \quad W:= \textstyle{\bigwedge^2} V_{\rho} \otimes \mu^{-1}\delta_{F/F^+}.\]
Then the following hold.
\begin{enumerate}
\item\label{symext:BK} $H_g^1(F^+(S)/F^+,W) = 0$.
\item\label{symext:H2} $H^2(F^+(S)/F^+,W) = 0$.
\item\label{symext:H1} The natural map
	\[ H^1(F^+(S)/F^+,W) \lra \prod_{v|p} H^1(F_v^+,W) / H_g^1(F_v^+,W) \]
is an isomorphism.
\end{enumerate}
\end{thm}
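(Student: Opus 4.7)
The plan is to apply Theorem~\ref{thm:BK} to a suitable $\calG_d$-valued lift $r : G_{F^+} \to \calG_d(E)$ of $\rho$ coming from the polarized structure. First fix a complex conjugation $c \in G_{F^+}$ with $c^2 = 1$, so $\rho(c)^2 = 1$, and define a twisted pairing on $E^d$ by $\langle a, b\rangle' := \langle a, \rho(c) b\rangle$, where $\langle \cdot,\cdot \rangle$ is the original pairing attached to the GO or GSp structure on $\rho$. Using $\rho(c)^2 = 1$ and the $\rho$-compatibility of $\langle \cdot, \cdot \rangle$, a short computation shows that $\langle \cdot, \cdot\rangle'$ is a perfect symmetric pairing and that
\[ \langle \rho(\sigma) a, \rho(c\sigma c) b \rangle' = \mu'(\sigma)\langle a,b\rangle' \quad \text{for all } \sigma \in G_F, \]
where $\mu' = \mu \cdot \delta_{F/F^+}$ in the GO case and $\mu' = \mu$ in the GSp case; in either case $\mu'$ is totally odd. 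By Lemma~\ref{thm:Gdhoms}, the triple $(\rho|_{G_F}, \mu', \langle\cdot, \cdot\rangle')$ corresponds to a continuous homomorphism $r : G_{F^+} \to \calG_d(E)$ with $r\rest_{G_F}$ (projected to $\GL_d$) equal to $\rho|_{G_F}$ and $\nu \circ r = \mu'$.

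Next, I identify $\ad(r)$ as a $G_{F^+}$-module. With $r(c) = (P',1)\jmath$ for the Gram matrix $P'$ of $\langle\cdot,\cdot\rangle'$, one computes $\ad(r(c))(X) = -P' X^t P'^{-1}$. Evaluating this involution on the summands of the decomposition~\eqref{eqn:gldecomp} and comparing with the usual adjoint action of $\rho(c)$ (which preserves each summand since $\rho(c) \in \GO_d(E)$ or $\GSp_d(E)$), one finds that $\ad(r(c))$ agrees with $\ad(\rho(c))$ on the summand identified with $\ad^0(\rho)$ and differs from it by the sign $-1$ on the complementary summand. Since $\delta_{F/F^+}(c) = -1$, this yields $G_{F^+}$-module isomorphisms
\[ \ad(r) \;\cong\; {\textstyle\bigwedge^2} V_\rho \otimes \mu^{-1} \;\oplus\; \Sym^2 V_\rho \otimes \mu^{-1}\delta_{F/F^+} \]
in the GO case and
\[ \ad(r) \;\cong\; \Sym^2 V_\rho \otimes \mu^{-1} \;\oplus\; {\textstyle\bigwedge^2} V_\rho \otimes \mu^{-1}\delta_{F/F^+} \]
in the GSp case. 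In both cases, the two summands are precisely the two candidates for $W$ in the theorem.

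To apply Theorem~\ref{thm:BK} to $r$, I may enlarge $S$ to include the finite ramification of $F/F^+$; this is harmless for conclusion~(1) by inflation, and for (2) and (3) by applying the totally real analogue of Lemma~\ref{thm:cohom} directly. We may then assume $F \subseteq F^+(S)$, so $F(S) = F^+(S)$. Let $\Pi := BC_{L/L^+}(\pi)$ be the cyclic base change of $\pi$ to $\GL_d(\A_L)$; by assumption~\ref{symextass:ad}, $\rhobar|_{G_L}$ is absolutely irreducible, so $\pi \not\cong \pi \otimes \delta_{L/L^+}$, and hence $\Pi$ is a regular algebraic conjugate self-dual cuspidal automorphic representation of $\GL_d(\A_L)$. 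Local-global compatibility then gives $r|_{G_L} \otimes \Qbar_p \cong \rho_{\Pi,\iota}$, verifying~\ref{BK:aut} of Theorem~\ref{thm:BK}, while~\ref{BK:adequate} follows directly from~\ref{symextass:ad} since the $\GL_d$-projection of $\rbar|_{G_{L(\zeta_p)}}$ equals $\rhobar|_{G_{L(\zeta_p)}}$.

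Applying Theorem~\ref{thm:BK} then yields the three conclusions for $\ad(r)$ over $F(S)/F^+$, and the direct sum decomposition above transports each of them to each of the two candidates for $W$, giving the theorem. The main obstacle is the sign bookkeeping in the construction of $r$ and the identification of $\ad(r)$ as a $G_{F^+}$-module: isolating the $\delta_{F/F^+}$-twist on the correct summand and ensuring the correct parity of $\mu'$ in each of the GO and GSp cases requires the computation with the involution $X \mapsto -P' X^t P'^{-1}$ to be carried out carefully; once done, the remaining reductions are routine.
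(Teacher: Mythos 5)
Your construction is, in substance, the paper's own argument: the twisted pairing $\langle a,\rho(c)b\rangle$ is exactly the paper's symmetric matrix $P=\rho(c)$ in the GO-case and $P=\rho(c)J^{-1}$ in the GSp-case, your passage through Lemma~\ref{thm:Gdhoms} and Theorem~\ref{thm:BK} is the paper's appeal to Theorem~\ref{thm:BKA}, your sign computation with $X\mapsto -P\transp X P^{-1}$ identifying $\ad(r)$ with the direct sum of the two candidates for $W$ (the $\delta_{F/F^+}$-twist landing on $\Sym^2$ in the GO-case and on $\bigwedge^2$ in the GSp-case) is the computation following \eqref{eqn:gldecomp}, and the base-change/cuspidality step is identical.

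The one point where you diverge, and where your sketch is under-justified, is the bookkeeping between $F^+(S)$ and $F(S)$. The paper replaces $F^+(S)/F^+$ by $F(S)/F^+$ by inflation--restriction, using that $\Gal(F(S)/F^+(S))$ is finite; you instead enlarge $S$ so that $F\subseteq F^+(S)$ and $F(S)=F^+(S)$, and then must descend to the original $S$. Descending conclusion~\ref{symext:BK} by injectivity of inflation on $H^1$ (compatibly with the local conditions at $p$) is fine, but your descent of \ref{symext:H2} and \ref{symext:H1} ``by the totally real analogue of Lemma~\ref{thm:cohom} directly'' is not direct: the numerology in \ref{thm:cohom} ($\dim H^0(F_v^+,\ad(r))=\frac{d(d-1)}{2}$ at $v\mid\infty$, coming from $\mu'(c_v)=-1$ via \cite{CHT}*{Lemma~2.1.3}, and $\dim\Fil^0 D_{\dR}=\frac{d(d+1)}{2}[F_v^+:\Q_p]$ at $v\mid p$) is available only for the full $\calG_d$-adjoint $\ad(r)$, and for the original $S$ this module need not factor through $\Gal(F^+(S)/F^+)$, since its $\delta_{F/F^+}$-twisted summand does not when $F\not\subseteq F^+(S)$. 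Running the Euler-characteristic argument on a single summand $W$ over $F^+(S)/F^+$ requires the value of $\dim W^{c_v=1}$, i.e.\ the eigenvalue multiplicities of $\rho(c_v)$ on $V_\rho$, which is an oddness input you have not supplied; only the sum of the two summands is controlled by $\mu'(c_v)=-1$ alone. The simplest repair is to note that whenever $W$ carries the $\delta_{F/F^+}$-twist, the statement of \ref{thm:symext} already presupposes that $\delta_{F/F^+}$ is a character of $\Gal(F^+(S)/F^+)$, i.e.\ $F\subseteq F^+(S)$; in that case $F(S)=F^+(S)$, no enlargement is needed, and your application of \ref{thm:BK} to $\ad(r)$ yields all three conclusions for both summands simultaneously, exactly as in the paper. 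Otherwise one should argue as the paper does, passing between $F^+(S)$ and $F(S)$ by inflation--restriction rather than by changing $S$.
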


Before proceeding with the proof, we note that results related to Theorem~\ref{thm:symext}, as well as Theorem~\ref{thm:symmod} below, were obtained by Chenevier \cite{ChenevierFern}*{Theorems~6.11 and 6.12}.

\begin{proof}
The closed subgroup $\Gal(F(S)/F^+(S))$ of $\Gal(F(S)/F^+)$ is either trivial or has order $2$, hence $H^i(F(S)/F^+(S),W) = 0$.  
Inflation-restriction then gives $H^i(F^+(S)/F^+,W) \cong H^i(F(S)/F^+,W)$ for all $i$, and we can replace $F^+(S)/F^+$ with $F(S)/F^+$ in each of \ref{symext:BK}, \ref{symext:H2}, and \ref{symext:H1} above. 

Let $\Pi$ be the base change, using \cite{ArthurClozel}*{Theorem~4.2 of Chapter~3}, of $\pi$ to $\GL_d(\A_L)$. 
The automorphic representation $\Pi$ is regular algebraic and polarizable, and the fact that $\rho|_{G_L}$ is absolutely irreducible implies that it is cuspidal. 
Thus, $\rho|_{G_F}$ satisfies the assumptions of Theorem~\ref{thm:BKA}. 
Fix a choice $c$ of complex conjugation. 
If we are in the GO-case, set $P = \rho(c)$. 
If we are in the GSp-case, set $P = \rho(c)J^{-1}$, where $J$ is the matrix defining the symplectic pairing on $E^d$. 
Then $P$ is an invertible symmetric matrix such that the pairing $\langle a, b \rangle$ on $E^d$ given by ${}^t a P^{-1} b$ satisfies 
	\[ \langle \rho(\sigma) a, \rho(c\sigma c) b \rangle = \mu(\sigma) \langle a, b \rangle, \]
for all $\sigma \in G_F$.
As in \ref{sec:CMThm}, we can extend the adjoint action of $G_F$ on $\Liegl_d(E)$ to an action of $G_{F^+}$ by letting $c$ act as $X \mapsto - P {}^t\! X P^{-1}$, and we let $\ad(\rho|_{G_F})$ denote $\Liegl_d(E)$ with this $G_{F^+}$-action. 

Then \eqref{eqn:gldecomp} gives a decomposition of $G_F$-representations
	\begin{equation}\label{eqn:addecomp} 
	\ad(\rho|_{G_F}) \cong \Sym^2 V_{\rho}\otimes \mu^{-1} \oplus \textstyle{\bigwedge^2} V_{\rho} \otimes \mu^{-1},
	\end{equation}
where 
	\begin{itemize}
	\item in the GO-case, $\Sym^2 V_{\rho}\otimes \mu^{-1}$ is identified with the symmetric matrices, and $\textstyle{\bigwedge^2} V_{\rho}\otimes \mu^{-1}$ with the antisymmetric matrices.
	\item in the GSp-case, $\Sym^2 V_{\rho}\otimes \mu^{-1}$ is identified with the matrices $X$ such that $\transp XJ = -J X$, and $\textstyle{\bigwedge^2} V_{\rho}\otimes \mu^{-1}$ with the matrices $X$ such that $\transp XJ = JX$;
	\end{itemize}
Calculating the action of $c$ on both sides of \eqref{eqn:addecomp}, we find
	\[\ad(\rho|_{G_F}) \cong \Sym^2 V_{\rho}\otimes \mu^{-1}\delta_{F/F^+} \oplus \textstyle{\bigwedge^2} V_{\rho} \otimes \mu^{-1},\]
as $G_{F^+}$-representations in the GO-case, and 
	\[\ad(\rho|_{G_F}) \cong \Sym^2 V_{\rho}\otimes \mu^{-1} \oplus \textstyle{\bigwedge^2} V_{\rho} \otimes \mu^{-1}\delta_{F/F^+},\]
as $G_{F^+}$-representations in the GSp-case.
The theorem now follows from Theorem~\ref{thm:BKA}.
\end{proof}
\subsubsection{Proof of Theorem C}\label{sec:smooth}
We recall the setup.  
Fix a continuous homomorphism 
	\[ \rbar : \Gal(F(S)/F^+) \lra \calG_d(\F) \]
inducing an isomorphism $\Gal(F/F^+) \xrightarrow{\sim} \calG_d(\F)/\calG_d^0(\F)$, as well as a totally odd \mbox{de Rham} character $\mu : \Gal(F(S)/F^+) \rightarrow \calO^\times$ with $\nu \circ \rbar = \mu \bmod {\frakm_{\calO}}$. 
We assume $\rbar\rest_{G_F}$ is absolutely irreducible.  
The ring $R_{\calS}$ in the statement of Theorem~\ref{thm:thmC} is the universal type $\calS$ deformation ring for the global deformation datum
	\[ \calS = (F/F^+,S,\emptyset,\calO,\rbar,\mu,\emptyset). \]
Theorem~\ref{thm:thmC} then follows immediately from \ref{thm:BK} and \ref{thm:globalsmooth}. \hfill\qed

\subsection{Symmetric powers}\label{sec:sympow}

We finish by giving one application of our main result combined with potential automorphy theorems. 

Let $f$ be an elliptic modular newform of weight $k\ge 2$ and level $\Gamma_1(N)$.  
Fix a prime $p$, a sufficiently large finite extension $E/\Q_p$, and an embedding of the coefficient field of $f$ into $E$. 
Let $S$ be a finite set of primes of $\Q$ containing all primes dividing $Np$, and let
	\[ \rho_f : \Gal(\Q(S)/\Q) \lra \GL(V_f) \cong \GL_2(E) \]
be the associated $p$-adic Galois representation. 
We let $\F$ be the residue field of $E$, and let 
	\[ \rhobar_f: \Gal(\Q(S)/\Q) \lra \GL_2(\F) \]
denote the (semisimple) residual representation of $\rho_f$.
For any $n\ge 1$, let
	\[ \Sym^n(\rho_f) : \Gal(\Q(S)/\Q) \lra \GL(\Sym^n V_f) \cong \GL_{n+1}(E) \]
be the representation on the $n$th symmetric power $\Sym^n V_f$ of $V_f$. 
. 

\begin{thm}\label{thm:symmod}
Let $F/\Q$ be any imaginary quadratic field.
Let $n \ge 1$, and set
	\[ W_n := \Sym^{2n} V_f \otimes \det(\rho_f)^{-n}\delta_{F/\Q}^{n+1}. \]
Then $H_g^1(\Q(S)/\Q, W_n) = 0$ under the following assumptions:
	\begin{enumerate}[label=(\alph*)]
	\item\label{symmodass:p} $p > n+4$ and $p \notin \{2n+1,2n+3\}$,
	\item\label{symmodass:im} the image of $\rhobar$ contains $\SL_2(\F_p)$.
	\end{enumerate}
\end{thm}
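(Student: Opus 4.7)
\medskip

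\noindent\emph{Proof proposal.} The strategy is to deduce Theorem~\ref{thm:symmod} from Theorem~\ref{thm:symext} by realising $W_n$ as a direct summand of the representation $W$ appearing there, applied to a suitable twist of $\Sym^n \rho_f$.  Set $\rho := \Sym^n \rho_f \otimes \det(\rho_f)^{-n/2}$ if $n$ is even, and $\rho := \Sym^n \rho_f$ if $n$ is odd.  The natural symplectic pairing $\bigwedge^2 V_f \cong \det(V_f)$ induces a pairing on $\Sym^n V_f$ valued in $\det(V_f)^n$ which is symmetric for $n$ even and alternating for $n$ odd; so $\rho$ is orthogonally self-dual with trivial multiplier $\mu$ (totally even) when $n$ is even, placing it in the GO-case of Theorem~\ref{thm:symext}, and symplectically self-dual with multiplier $\mu = \det(\rho_f)^n$ (totally odd, since $\det\rho_f$ is totally odd and $n$ is odd) when $n$ is odd, placing it in the GSp-case.

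Using the Clebsch--Gordan decomposition of $\SL_2$-representations,
\[
\Sym^n V_f \otimes \Sym^n V_f \;\cong\; \bigoplus_{k=0}^n \Sym^{2n-2k}V_f \otimes \det(V_f)^k,
\]
combined with the classical fact that $\Sym^{2n-2k}V_f \otimes \det(V_f)^k$ lies in $\Sym^2(\Sym^n V_f)$ for $k$ even and in $\bigwedge^2(\Sym^n V_f)$ for $k$ odd, the $k=0$ summand $\Sym^{2n}V_f$ lies in the symmetric square, so $\Sym^{2n}V_f \otimes \det(\rho_f)^{-n}$ is a direct summand of $\Sym^2 V_\rho \otimes \mu^{-1}$ in both parities.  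Since $\delta_{F/\Q}^{n+1} = \delta_{F/\Q}$ for $n$ even and $\delta_{F/\Q}^{n+1} = 1$ for $n$ odd, this presents $W_n$ as a direct summand of $\Sym^2 V_\rho \otimes \mu^{-1} \delta_{F/\Q}$ in the GO-case and of $\Sym^2 V_\rho \otimes \mu^{-1}$ in the GSp-case, both of which are exactly of the form $W$ appearing in Theorem~\ref{thm:symext}.

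To apply Theorem~\ref{thm:symext}, I verify its two hypotheses.  The representation $\rho$ has distinct Hodge--Tate weights (since $\rho_f$ does, as $k \ge 2$), is polarizable by construction, and is absolutely irreducible: indeed, $\SL_2(\F_p) \subseteq \mathrm{Im}(\rhobar_f)$ and $p > n+4 > n$ ensure $\Sym^n$ of the standard $\SL_2(\F_p)$-representation is irreducible.  Hence the potential automorphy theorem of Barnet-Lamb--Gee--Geraghty--Taylor \cite{BLGGT}*{Theorem~4.5.1} produces a totally real Galois extension $L^+/\Q$ over which $\rho|_{G_{L^+}}$ becomes automorphic; enlarging $L^+$ if necessary, I may arrange $\zeta_p \notin L := F L^+$ and $\rhobar_f(G_{L(\zeta_p)}) \supseteq \SL_2(\F_p)$, so that the residual image of $\rho$ on $G_{L(\zeta_p)}$ contains $\Sym^n\SL_2(\F_p) \subseteq \GL_{n+1}(\F)$.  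This subgroup is adequate under the hypotheses $p > n+4$ and $p \notin \{2n+1, 2n+3\}$ by the established analysis of adequacy for symmetric-power images of $\SL_2(\F_p)$, combining the criteria of \cite{ThorneAdequate} with the refinements of \cite{Thorne2adic}.  Theorem~\ref{thm:symext} then yields $H_g^1(\Q(S)/\Q, W) = 0$, and passing to the direct summand gives $H_g^1(\Q(S)/\Q, W_n) = 0$.  The main obstacle is the adequacy verification: the precise numerical hypotheses $p > n+4$ and $p \notin \{2n+1, 2n+3\}$ are tailored to make $\Sym^n\SL_2(\F_p)$ satisfy all three adequacy axioms of Definition~\ref{def:GLadequate}, requiring a delicate analysis of $H^1(\Sym^n \SL_2(\F_p), \Liegl_{n+1}/\frakz)$ and of spanning by semisimple elements of this image in the small-characteristic regime where $p$ is comparable to $n$.
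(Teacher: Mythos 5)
Your overall skeleton is the same as the paper's: take $\rho$ to be (a twist of) $\Sym^n\rho_f$, note it lands in the GO- or GSp-case according to the parity of $n$, realize $W_n$ as a $G_\Q$-equivariant direct summand of $\Sym^2 V_\rho\otimes\mu^{-1}$ (resp.\ $\Sym^2 V_\rho\otimes\mu^{-1}\delta_{F/\Q}$), and then feed $\rho$ into Theorem~\ref{thm:symext} via potential automorphy plus adequacy. The twist by $\det(\rho_f)^{-n/2}$ for $n$ even is harmless but unnecessary, since Theorem~\ref{thm:symext} allows any totally even multiplier in the GO-case; the paper simply uses $\rho=\Sym^n\rho_f$ with multiplier $\det(\rho_f)^n$ in both parities.

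The genuine gap is in your potential automorphy step. You invoke \cite{BLGGT}*{Theorem~4.5.1}, but that theorem requires $\rho\rest_{G_{\Q_p}}$ to be potentially diagonalizable (besides regularity, polarizability and a residual condition), and nothing in the hypotheses of Theorem~\ref{thm:symmod} supplies this: there is no ordinarity assumption and no relation between $p$ and the weight $k$ of $f$, so $\Sym^n\rho_f\rest_{G_{\Q_p}}$ need not be in the Fontaine--Laffaille range, and potential diagonalizability of symmetric powers is not a formal consequence of anything you have verified. The proof instead uses the symmetric-power-specific potential automorphy theorem \cite{BLGHT}*{Theorem~B}, which applies to $\Sym^n\rho_f$ for any non-CM newform of weight $k\ge 2$ with no such local hypothesis at $p$ (non-CM being guaranteed by assumption~\ref{symmodass:im}); without replacing your citation by this (or otherwise establishing potential diagonalizability), the argument does not go through under the stated assumptions. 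Two further points: the adequacy of $\Sym^n(\SL_2(\F_p))\subseteq\GL_{n+1}(\F)$ under exactly the numerology $p>n+4$, $p\notin\{2n+1,2n+3\}$ is not contained in \cite{ThorneAdequate} or \cite{Thorne2adic} (the general bound there would force roughly $p\ge 2(n+2)$); it is the content of \cite{GHTadequateLD}*{Corollary~1.5}, which is what the theorem's hypothesis~\ref{symmodass:p} is calibrated to. Finally, you cannot simply ``enlarge $L^+$'' to arrange $\zeta_p\notin FL^+$ and preserve automorphy; rather, the potential automorphy theorem lets you choose $L^+$ linearly disjoint from the fixed field of $\rhobar_f|_{G_{F(\zeta_p)}}$ (see \cite{BLGGT}*{Theorem~5.4.1}), which is how one keeps $\rhobar_f(G_{L(\zeta_p)})\supseteq\SL_2(\F_p)$ and $\zeta_p\notin L$.
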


\begin{proof}
Let $\rho := \Sym^n(\rho_f)$,
and let $\rhobar : \Gal(\Q(S)/\Q) \rightarrow \GL_{n+1}(\F)$ be the (semisimple) residual representation. 
Note that $\rhobar$ is isomorphic to the semisimplification of $\Sym^n(\rhobar_f)$.
Set 
	\[ W:= \Sym^2(\Sym^n V_f) \otimes \det(\rho_f)^{-n}\delta_{F/\Q}^{n+1}. \]
There is a natural symmetric bilinear map $\Sym^n V_f \times \Sym^n V_f \rightarrow \Sym^{2n} V_f$, which induces a surjection
	\[ \Sym^2 (\Sym^n V_f) \lra \Sym^{2n} V_f. \]
This map is $\GL_2(E)$-equivariant and has a $\GL_2(E)$-equivariant splitting. 	
Thus, $W_n$ is a $G_{\Q}$-equivariant direct summand of $W$, so it suffices to show $H_g^1(\Q(S)/\Q,W) = 0$. 
To do this, we apply Theorem~\ref{thm:symext}.

If $n$ is even, then $\rho$ is conjugate to a representation valued in $\GO_{n+1}(E)$, and if $n$ is odd, to one valued in  $\GSp_{n+1}(E)$, and in either case the multiplier character is $\det(\rho_f)^n$. 
Assumption \ref{symmodass:im} implies that $f$ does not have complex multiplication, so \cite{BLGHT}*{Theorem~B} implies there is a totally real Galois extension $L^+/\Q$, a regular algebraic polarizable cuspidal automorphic representation $\pi$ of $\GL_{n+1}(\A_{L^+})$, and an isomorphism $\iota : \Qbar_p \xrightarrow{\sim}\C$ such that $\rho|_{G_{L+}} \otimes \Qbar_p \cong \rho_{\pi,\iota}$. 
Moreover, we may assume that $L^+$ is disjoint from the subfield of $\Qbar$ fixed by $\rhobar_f|_{G_{F(\zeta_p)}}$ (for example, see \cite{BLGGT}*{Theorem~5.4.1}). 
Note that assumption \ref{symmodass:p} implies $p\ge 5$. 
So $\zeta_p \notin F$, and the image of the restriction $\rhobar_f|_{G_{F(\zeta_p)}}$ still contains $\SL_2(\F_p)$. 
By choice of $L^+$, we have $\zeta_p \notin L:= FL^+$ and $\rhobar_f(G_{L(\zeta_p)})$ contains $\SL_2(\F_p)$. 
This implies, in particular, that the image of $G_{L(\zeta_p)}$ under $\Sym^n (\rhobar_f)$ acts absolutely irreducibly and $\rhobar \cong \Sym^n (\rhobar_f)$. 
Under assumption \ref{symmodass:p}, \cite{GHTadequateLD}*{Corollary~1.5} implies $\rhobar(G_{L(\zeta_p)})$ is adequate. 
We've verified the assumptions of Theorem~\ref{thm:symext} for $\rho$, so $H_g^1(\Q(S)/\Q,W) = 0$.
\end{proof}

\begin{bibdiv}
\begin{biblist}	

\bibselect{/home/patrick/Dropbox/Research/Refs}

\end{biblist}
\end{bibdiv}

\end{document}